\newtheorem{theorem}{Theorem}[section]
\newtheorem{lemma}{Lemma}[section]
\newtheorem{lem}{Lemma}[section]
\newtheorem{prop}{Proposition}[section]
\newtheorem{cor}{Corollary}[section]
\theoremstyle{definition}
\newtheorem*{ex}{Example}
\begin{document}

\title{Eichler orders, trees and Representation Fields}

%first author Luis Arenas-Carmona\thanks{Supported by Fondecyt

\author{\sc Luis Arenas-Carmona}
%{{\sc Luis} Arenas-Carmona}
%\address{Luis {\sc Arenas-Carmona}\\
%Universidad de Chile\\
%Casilla 653, Santiago} \email{learenas@u.uchile.cl}
%%\urladdr{http://www.math.u-bordeaux.fr/A2X/}

% second author
%\author[Pr\'enom2 {\sc Nom2}]{{\sc Pr\'enom2} NOM2}
%\address{Pr\'enom2 {\sc Nom2}\\
%Universit\'e Gauss\\
%4, impasse de Diophante\\
%00000 Mars Cedex, Espace}
%\email{email address}

\newcommand\Q{\mathbb Q}
\newcommand\alge{\mathfrak{A}}
\newcommand\Da{\mathfrak{D}}
\newcommand\Ea{\mathfrak{E}}
\newcommand\Ha{\mathfrak{H}}
\newcommand\oink{\mathcal O}
\newcommand\matrici{\mathbb{M}}
\newcommand\Txi{\lceil}
\newcommand\ad{\mathbb{A}}
\newcommand\enteri{\mathbb Z}
\newcommand\finitum{\mathbb{F}}
\newcommand\bbmatrix[4]{\left(\begin{array}{cc}#1&#2\\#3&#4\end{array}\right)}
\newcommand\lbmatrix[4]{\textnormal{\scriptsize{$\left(\begin{array}{cc}#1&#2\\#3&#4
\end{array}\right)$}}\normalsize}
\maketitle

\begin{abstract}
If $\Ha\subseteq\Da$ are two orders in a central simple algebra
$\alge$ with $\Da$ of maximal rank, the representation field
$F(\Da|\Ha)$ is a subfield of the spinor class field of the genus
of $\Da$ which determines the set of spinor genera of orders in
that genus representing the order $\Ha$. Previous work have
focused on two cases, maximal orders $\Da$ and commutative orders
$\Ha$. In this work, we show how to compute the representation
field $F(\Da|\Ha)$ when $\Da$ is the intersection of a finite
family of maximal orders, e.g., an Eichler order, and $\Ha$ is
arbitrary. Examples are provided.
\end{abstract}

\bigskip
\section{Introduction}

Let $K$ be a global field, and let $\alge$ be a quaternion algebra
over $K$. Let $X$ be an A-curve with field of functions $K$ as
defined in \S2 of \cite{abelianos}. For example, when $K$ is a
number field we can assume $X=\mathrm{Spec}(\oink_{K,S})$ for a
finite set $S=X^c$ containing the infinite places, but we also
include the case in which $X$ is an affine or projective curve
over a finite field, where in the latter case we set
$S=\emptyset$. In \cite{abelianos} we computed a representation
field that determines the set of spinor genera of maximal orders
that contain a conjugate of a given commutative order. In fact,
the definition of genera, spinor genera, spinor class fields, and
representation fields, given in \cite{abelianos} for maximal
order, has a straightforward extension for orders of maximal rank,
extending the usual definition when $K$ is a number field
\cite{spinor}. When strong approximation holds, spinor genera
coincide with conjugacy classes, just as in the number field case.
 The existence of a
representation field $F$ for $\Ha$ implies that the proportion of
conjugacy classes in the genus $\mathbb{O}=\mathrm{Gen}(\Da)$
representing $\Ha$ is $[F:K]^{-1}$. This fact was first studied by
Chevalley \cite{Chevalley} when $\alge$ is a matrix algebra of
arbitrary dimension, $\Da$ is a maximal order, and $\Ha$ is the
maximal order in a maximal subfield in $\alge$. In more recent
times, several authors have studied the problem in the more
restricted case of quaternion algebras. The following table
summarizes the main results in this area:

\footnotesize
\[
\begin{tabular}{ | c | c | c | c |c| }
  \hline
  Year & Ref. & $\Da$ & $\Ha$ & other \\
  \hline\hline
  1999 & \cite{FriedmannQ} & maximal & commutative &  \\  \hline
  2004 & \cite{Guo}& Eichler& commutative & \\   \hline
    2004 & \cite{Chan}& Eichler& commutative & Proves existence with no restriction on $\Da$ \\   \hline
  2008 & \cite{eichler} & maximal & Eichler  & Finds couterexample with $\dim_K\alge=9$  \\
  \hline
    2008 & \cite{Macla} & EOSFL & commutative & Considers optimal embeddings  \\  \hline
   2011 & \cite{lino1} & & commutative & Assumes some technical conditions on\\
&&&& the pair $(\Da,\Ha)$ \\ \hline
\end{tabular}
\]
\normalsize Here EOSFL means Eichler order of square free level.
In 2011, the representation field was computed for commutative
orders $\Ha$ in maximal orders $\Da$ of central simple algebras of
arbitrary dimension \cite{abelianos}. The commutativity condition
on $\Ha$ is only necessary in a technical step, and in fact the
method in \cite{abelianos} allows the computation of spinor class
fields for other interesting families of orders, like cyclic
orders \cite{cyclic}. However, the condition that $\Da$ is maximal
is essential in this computations and a generalization to
arbitrary orders of maximal rank seems unlikely at this point.

To fix ideas, we say that two $X$-orders $\Da$ and $\Da'$, of
maximal rank in $\alge$, are in the same genus, if
$\Da'_\wp=a_\wp\Da a_\wp^{-1}$ for some local element $a_\wp$ in
every completion $\alge_\wp^*$. Equivalently, $\Da$ and $\Da'$ are
in the same genus if $\Da'=a\Da a^{-1}$ for some adelic element
$a\in\alge_\ad^*$. Similarly, two $X$-orders $\Da$ and $\Da'$, of
maximal rank in $\alge$, are in the same spinor genus, if
$\Da'_\wp=b\Da b^{-1}$ for some local element
$b=rc\in\alge_\ad^*$, where $r\in\alge^*$ is a global element,
while $c$ is an element with trivial reduced norm. The spinor
class field $\Sigma=\Sigma(\mathbb{O})$ for a genus $\mathbb{O}$
of orders of maximal rank is defined as the class field
corresponding to $K^*H(\Da)$, where $\Da$ is an order in
$\mathbb{O}$, and $H(\Da)$ is the group of reduced norms of adelic
elements stabilizing $\Da$ by conjugation. This definition is
independent of the choice of $\Da\in\mathbb{O}$, since the reduced
norm is invariant under conjugation. The importance of this field
lies in the existence of a map
$$\rho:\mathbb{O}\times\mathbb{O}\rightarrow\mathrm{Gal}(\Sigma/K),$$
with the following properties:
\begin{enumerate}
\item $\Da$ and $\Da'$ are conjugate if and only if
$\rho(\Da,\Da')=\mathrm{Id}_\Sigma$, \item
$\rho(\Da,\Da'')=\rho(\Da,\Da')\rho(\Da',\Da'')\qquad\forall
(\Da,\Da',\Da'')\in\mathbb{O}^3$,
\end{enumerate}
When $\Ha$ is a suborder of $\Da$ (or some other order in the
genus of $\Da$) the representation field $F=F(\Da|\Ha)$ is the
class field of the set $K^*H(\Da|\Ha)$, where
$$H(\Da|\Ha)=\{N(a)|a\in\alge_\ad^*,\textnormal{ and }a\Ha
a^{-1}\subseteq\Da\},$$ if this set turns out to be a group. It
has the property that $\Ha$ embeds into an order
$\Da'\in\mathbb{O}$ if and only if $\rho(\Da,\Da')$ is trivial on
$F$. When $F(\Da|\Ha)$ is defined the number of spinor genera
representing $\Ha$ divides the total number of spinor genera. This
is not always the case in algebras of higher dimension
\cite{eichler}. The proof of all these facts is a word-by-word
transliteration of the proof for maximal orders (\cite{abelianos},
\S2).

The purpose of the current work is to give a formula for the
representation field $F(\Da|\Ha)$ whenever $\Da=\oink_X+I\Da_0$
for an Eichler order $\Da_0$ in a quaternion algebra $\alge$, and
an integral ideal, i.e., a $1$-dimensional lattice, $I$ in $K$.
Our result has no restriction on the sub-order $\Ha$:
\begin{theorem}\label{th11}
Let $\Da=\oink_X+I\Da_0$ in the preceding notations. Then the
spinor class field of $\Da$ is the largest field $\Sigma$
satisfying the following conditions:
\begin{enumerate}
\item $\Sigma/K$ is an extension of exponent $2$ unramified at the finite primes. \item $\Sigma/K$
splits completely at every place $\wp$ satisfying one of the following conditions:
\begin{enumerate} \item $\wp$ is non-archimedean and $\wp\notin X$, \item $\wp$ is archimedean and $\alge_\wp$ is a matrix algebra, \item $\wp\in X$ and $\alge_\wp$ is a division algebra,
 \item the level $d_\wp$ of $\Da_0$ at $\wp$ is odd.
\end{enumerate}\end{enumerate}
Furthermore, for any suborder $\Ha\subseteq\Da$, the
representation field is the largest subfield splitting completely
at every place $\wp$ where $\Ha$ embeds locally in
$\oink_X+I\Da_1$ for an Eichler order $\Da_1$ whose level at $\wp$
is strictly larger than $d_\wp$.
\end{theorem}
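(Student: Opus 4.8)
The plan is to reduce everything to local computations and then apply class field theory, exactly as in the transliteration of the maximal‑order case recalled above. Since the reduced norm is computed place by place, the idele subgroups cutting out $\Sigma$ and $F(\Da|\Ha)$ factor as restricted products, so it suffices to determine, at each place $\wp$, the local groups $H_\wp(\Da_\wp)=\{N(g):g\in\alge_\wp^*,\ g\Da_\wp g^{-1}=\Da_\wp\}$ and $H_\wp(\Da_\wp|\Ha_\wp)=\{N(g):g\in\alge_\wp^*,\ g\Ha_\wp g^{-1}\subseteq\Da_\wp\}$, and to check that the latter is a group. Then $\Sigma$ is the class field of $K^*\prod_\wp H_\wp(\Da_\wp)$, $F(\Da|\Ha)$ is the class field of $K^*\prod_\wp H_\wp(\Da_\wp|\Ha_\wp)$, $F(\Da|\Ha)\subseteq\Sigma$ because $H_\wp(\Da_\wp)\subseteq H_\wp(\Da_\wp|\Ha_\wp)$ always (the stabiliser of $\Da_\wp$ is contained in the transporter of $\Ha_\wp$ into $\Da_\wp$), and a finite extension of $K$ satisfies conditions 1--2 of the theorem, resp.\ splits at a given place, precisely when its norm group contains the relevant $\oink_\wp^*$ and squares, resp.\ equals $K_\wp^*$. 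The archimedean places and the places $\wp\notin X$ are immediate: there $\Da_\wp=\alge_\wp$, so both local groups equal $N(\alge_\wp^*)$, which is all of $K_\wp^*$ except at a ramified real place where it is $\mathbb{R}_{>0}$; at a finite $\wp$ with $\alge_\wp$ a division algebra the unique maximal order, hence $\Da_\wp$, is normalised by all of $\alge_\wp^*$ and again both groups are $K_\wp^*$. So from now on assume $\wp\in X$ is finite with $\alge_\wp\cong M_2(K_\wp)$.

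The key geometric input is a dictionary with the Bruhat--Tits tree $\mathcal T_\wp$ of $\mathrm{PGL}_2(K_\wp)$. An Eichler order of level $d$ is the intersection of the maximal orders lying on a geodesic segment of length $d$; more generally I will show that $\Da_\wp=\oink_\wp+I_\wp(\Da_0)_\wp$ is the intersection of the maximal orders lying in the tube $T_\wp$ of radius $v_\wp(I)$ around the segment $P_\wp$ attached to $(\Da_0)_\wp$, that conjugation by $g\in\alge_\wp^*$ acts compatibly on orders and on $\mathcal T_\wp$, and that $\Da_\wp$ recovers $T_\wp$ and then $P_\wp$ (as the set of vertices of $T_\wp$ of maximal depth). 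Consequently the $\alge_\wp^*$‑stabiliser of $\Da_\wp$ is the stabiliser of $P_\wp$, generated by the pointwise stabiliser of $P_\wp$ (reduced norms $\oink_\wp^*$), the scalars (norms $(K_\wp^*)^2$), and, when $d_\wp$ admits it, an involution reversing $P_\wp$ (reduced norm of valuation $d_\wp$). Hence $H_\wp(\Da_\wp)=\oink_\wp^*(K_\wp^*)^2$ if $d_\wp$ is even and $H_\wp(\Da_\wp)=K_\wp^*$ if $d_\wp$ is odd; combining the local computations with the class field theory of the first paragraph gives the stated description of $\Sigma$.

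For the representation field, fix such a $\wp$; we may assume $d_\wp$ is even, since for $d_\wp$ odd one already has $H_\wp(\Da_\wp)=K_\wp^*$, forcing $H_\wp(\Da_\wp|\Ha_\wp)=K_\wp^*$, and the condition in the theorem is then vacuous. The equivalence to prove is: $H_\wp(\Da_\wp|\Ha_\wp)=K_\wp^*$ if and only if $\Ha_\wp$ embeds in $\oink_\wp+I_\wp(\Da_1)_\wp$ for some Eichler order $\Da_1$ with $\mathrm{level}_\wp(\Da_1)=d_\wp+1$ — equivalently with level $>d_\wp$, since a length‑$(>d_\wp)$ segment contains a length‑$(d_\wp+1)$ one and $\oink_\wp+I_\wp(\cdot)$ is monotone. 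If $\Ha_\wp$ so embeds, then (after conjugating) $\Ha_\wp$ lies in $\bigcap\{\oink:\oink\in T(Q,v_\wp(I))\}$ for a segment $Q$ of length $d_\wp+1$; this tube contains the tubes of the same radius around the two length‑$d_\wp$ subsegments $Q_L,Q_R$ of $Q$, whose intersections of maximal orders are conjugates $g_L\Da_\wp g_L^{-1}$ and $g_R\Da_\wp g_R^{-1}$ of $\Da_\wp$; a bipartition‑parity count shows $v_\wp(\det g_L)$ and $v_\wp(\det g_R)$ have opposite parities (they send the same‑coloured endpoints of $P_\wp$ to endpoints of opposite colours), so one of $N(g_L)^{-1}N(h),N(g_R)^{-1}N(h)$ (with $h$ the embedding element) lies in $H_\wp(\Da_\wp|\Ha_\wp)$ and has odd valuation, which together with $\oink_\wp^*(K_\wp^*)^2\subseteq H_\wp(\Da_\wp|\Ha_\wp)$ (scalars and the pointwise stabiliser of $T_\wp$, using $\Ha_\wp\subseteq\Da_\wp$) gives $H_\wp(\Da_\wp|\Ha_\wp)=K_\wp^*$. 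Conversely, if some admissible $g$ has $v_\wp(N(g))$ odd, then $\Ha_\wp\subseteq\Da_\wp\cap g^{-1}\Da_\wp g$; I will show this intersection equals $\oink_\wp+I_\wp\bigl((\Da_0)_\wp\cap g^{-1}(\Da_0)_\wp g\bigr)$, and that $(\Da_0)_\wp\cap g^{-1}(\Da_0)_\wp g$, being the intersection of the maximal orders on the convex hull of $P_\wp\cup g^{-1}P_\wp g$, is contained in an Eichler order of level $\ge d_\wp+1$, because that convex hull — a union of two length‑$d_\wp$ segments whose endpoints have opposite colours in $\mathcal T_\wp$ — always contains a geodesic of length $\ge d_\wp+1$ (a case analysis on how the two segments meet, together with the parity constraint). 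Thus $\Ha_\wp$ embeds in $\oink_\wp+I_\wp(\Da_1)_\wp$ for a longer Eichler order $\Da_1$, as required. In every case $H_\wp(\Da_\wp|\Ha_\wp)$ is $\oink_\wp^*(K_\wp^*)^2$ or $K_\wp^*$, hence a group, and taking the restricted product and its class field yields $F(\Da|\Ha)$ as the largest subfield of $\Sigma$ splitting at the asserted places.

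The main obstacle is the converse half of the local equivalence, and within it the two structural facts it rests on: the identity $\Da_\wp\cap g^{-1}\Da_\wp g=\oink_\wp+I_\wp\bigl((\Da_0)_\wp\cap g^{-1}(\Da_0)_\wp g\bigr)$, which is straightforward in odd residue characteristic but needs care at primes above $2$, where sums of orders can acquire unexpected scalar elements; and the statement that the relevant convex hull contains a geodesic of length $\ge d_\wp+1$, whose proof is where one genuinely uses that $\Da_0$ is an Eichler order, so that $T_\wp$ is an honest tubular neighbourhood and $\Da_\wp$ is recovered from a segment rather than from a more complicated subtree. Settling this cleanly requires understanding which finite subtrees of $\mathcal T_\wp$ occur as the divisor tree $\{\oink\text{ maximal}:\Ha_\wp\subseteq\oink\}$ of an arbitrary quaternion order $\Ha_\wp$. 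The degenerate cases — archimedean places, ramified primes, and residue characteristic $2$ — must be dispatched by direct computation with the small lattice of orders, and one must confirm, as above, that the resulting local transporter sets multiply to a group, so that $F(\Da|\Ha)$ is indeed defined.
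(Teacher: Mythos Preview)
Your plan is correct and essentially matches the paper's argument: localize, handle the easy places directly, and at split finite places reduce to a parity question on the Bruhat--Tits tree. The paper organizes the tree step a little more cleanly by introducing the branch $S_r(\Ha)=\{\Da'\text{ maximal}:\Ha\subseteq(\Da')^{[r]}\}$ (with $r=v_\wp(I)$) and showing that $H_\wp(\Da^{[r]}|\Ha)=K_\wp^*$ if and only if the diameter of $S_r(\Ha)$ exceeds $d_\wp$; your $Q_L/Q_R$ shift is exactly the forward direction, and the converse is packaged as a one-paragraph lemma about an \emph{arbitrary} finite tree of even diameter $d$ (any two pairs of vertices at distance $d$ are at mutual even distance), which subsumes your convex-hull case analysis without further work. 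Two of your flagged obstacles dissolve on inspection. The identity $(\Ha_1\cap\Ha_2)^{[s]}=\Ha_1^{[s]}\cap\Ha_2^{[s]}$ holds in every residue characteristic by a two-line argument: if $\lambda+\pi^s\rho=\mu+\pi^s\sigma$ with $\lambda,\mu\in\oink_K$, then $\sigma-\rho\in K$, so $\rho$ and $\sigma$ commute, so $\sigma-\rho$ is integral and hence lies in $\oink_K$; primes above $2$ require no special treatment. And the classification of which subtrees arise as $S_0(\Ha)$ is entirely unnecessary for Theorem~\ref{th11}, since the tree lemma above applies to any finite tree --- that structural classification is the separate content of Theorem~\ref{thm2} and is never invoked here.
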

The description of the class group of $\Sigma$, in the specific
case of Eichler orders and without the language of spinor class
fields, appears already in Corollary III.5.7 in \cite{vigneras}.
Here we obtain this computation with no effort as a consequence of
the general computation of relative spinor images in terms of
branches obtained in \S3. The condition in the last statement of
Theorem \ref{th11} can easily be decided by the methods described
in \S4. As a consequence, we generalize the results for Eichler
orders in \cite{Guo} or \cite{Chan} in this manner (\S6).

 The importance of the orders considered here lays in the fact
 that they are the only orders that can be
written as the intersection of a family of maximal orders. In
fact, as a byproduct of our work on these orders, we prove in \S5
the following result:
\begin{theorem}\label{thm2}
For an order of maximal rank $\Da$, the following conditions are
equivalent:
\begin{enumerate}
\item $\Da=\oink_X+I\Da_0$ for an Eichler order $\Da_0$ and an
integral ideal $I$ such that $I_\wp=\oink_\wp$ whenever $\alge_\wp$
is a division algebra. \item $\Da$ is the intersection of a finite
family of maximal orders.\item $\Da$ is the intersection of three
maximal orders.
\end{enumerate}
\end{theorem}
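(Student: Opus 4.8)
The plan is to argue locally and then reassemble, the whole content being confined to the places where $\alge_\wp$ is a matrix algebra. Both of the properties in question can be checked completion by completion: $\bigcap_j\Da^{(j)}$ is read off from the $\bigcap_j\Da^{(j)}_\wp$, while an Eichler order and a fractional ideal are determined by their localizations and, at a fixed $\wp$, all Eichler orders of a given level are conjugate; so it is enough to classify, at each $\wp$, the suborders of $\alge_\wp$ that are intersections of maximal orders, to recognize the local shape of $\oink_X+I\Da_0$ among them, and to glue the data back. At a place where $\alge_\wp$ is a division algebra there is a single maximal order, so (2) and (3) hold automatically and the only local shape of $\oink_X+I\Da_0$ which is an intersection of maximal orders is that maximal order — which is precisely why (1) must impose $I_\wp=\oink_\wp$ there.

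So fix $\wp$ with $\alge_\wp\cong\matrici_2(K_\wp)$, so that maximal orders are the vertices of the Bruhat--Tits tree $\mathcal T_\wp$. Here $\oink_v\cap\oink_{v'}$ is the Eichler order of level $d(v,v')$ attached to the geodesic $[v,v']$, equal to $\bigcap_{w\in[v,v']}\oink_w$; in particular $\oink_v\cap\oink_{v'}\subseteq\oink_w$ for $w$ on that geodesic, so $\bigcap_{v\in S}\oink_v$ depends only on the convex hull of $S$. If $\mathcal O$ has maximal rank then $\mathcal O\supseteq\wp^m\oink_{v_0}$ for some $m$, and $\wp^m\oink_{v_0}\subseteq\oink_v$ forces $d(v_0,v)\le m$, so $T(\mathcal O):=\{v:\mathcal O\subseteq\oink_v\}$ is a finite subtree and $\mathcal O=\bigcap_{v\in T(\mathcal O)}\oink_v$ as soon as $\mathcal O$ is an intersection of maximal orders at all — such an $\mathcal O$ is automatically a \emph{finite} intersection. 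The crux is the local classification: the subtrees occurring as $T(\mathcal O)$ are exactly the thickened geodesics $B_n(\gamma)$ (vertices within distance $n$ of a geodesic segment $\gamma$ of length $d$), the associated order is $\oink_\wp+\wp^n\Ea$ with $\Ea$ the Eichler order of $\gamma$, and this order is already cut out by the three maximal orders at the tips of a suitable tripod inside $B_n(\gamma)$, with arms chosen so that its longest geodesic has length $d+2n$ and it has radius $n$. This is exactly the kind of local computation produced by the branch description of \S3; alternatively it follows from an explicit matrix computation after moving a longest geodesic of the subtree into standard position (the residue characteristic $2$ requiring slightly more care).

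Granting this, (3)$\Rightarrow$(2) is trivial, and locally (2)$\Rightarrow$(1) and (1)$\Rightarrow$(3) are immediate: in the first case $\Da_\wp=\bigcap_{v\in T}\oink_v$ with $T$ finite, so $T=B_{n_\wp}(\gamma_{d_\wp})$ and $\Da_\wp=\oink_\wp+\wp^{n_\wp}\Ea_\wp$; in the second $T(\Da_\wp)$ is a thickened geodesic and three of its vertices recover $\Da_\wp$. For the global reassembly in (2)$\Rightarrow$(1): the local geodesics $\gamma_{d_\wp}$ — a single vertex for almost all $\wp$ and at the division places — are the localizations of a global Eichler order $\Da_0$ of level $d_\wp$ at $\wp$, and the radii $n_\wp$ define an integral ideal $I$ with $v_\wp(I)=n_\wp$ and $I_\wp=\oink_\wp$ at the division places; then $\Da=\oink_X+I\Da_0$, since equality holds in every completion. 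For (1)$\Rightarrow$(3): choose at each $\wp$ three vertices $v_1^\wp,v_2^\wp,v_3^\wp$ with $\Da_\wp=\oink_{v_1^\wp}\cap\oink_{v_2^\wp}\cap\oink_{v_3^\wp}$ (the three taken equal, to the localization of a fixed maximal order containing $\Da$, at the division places and at the almost-all $\wp$ where $\Da_\wp$ is maximal), and realize each system $(v_i^\wp)_\wp$ as the localizations of a global maximal order $\Da_i$ — possible because maximal orders are everywhere locally conjugate, so an adelic conjugate of a fixed global maximal order, intersected with $\alge$, is a global maximal order with prescribed completions; then $\Da=\Da_1\cap\Da_2\cap\Da_3$, again because this holds locally.

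The main obstacle is the local classification in the second paragraph: identifying precisely which finite subtrees are of the form $T(\mathcal O)$ — the thickened geodesics — computing the attached order as $\oink_\wp+\wp^n\Ea$, and showing that three maximal orders already suffice to carve it out; the reduction to a normal-form computation uses the homogeneity of $\mathcal T_\wp$ under $\mathrm{PGL}_2(K_\wp)$.
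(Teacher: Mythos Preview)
Your proposal is correct and follows essentially the same route as the paper: reduce to a local question on the Bruhat--Tits tree at split places, identify the intersections of finitely many maximal orders with the orders $\Da_0^{[r]}=\oink_\wp+\pi^r\Da_0$ whose branch $S_0$ is an $r$-thickened segment, and exhibit the same tripod of three maximal orders to realize each such order. The piece you flag as the ``main obstacle'' --- that the subtrees arising as $T(\mathcal O)$ are precisely thickened segments and that the tripod already cuts out the right order --- is exactly what the paper establishes in \S5 via Lemmas~\ref{lt4}--\ref{lt5} and Corollary~\ref{cor51} (your reference to ``\S3'' should be to \S\S4--5); note also that no special handling of residue characteristic~$2$ is needed in the paper's argument.
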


Note that if $\Da$ is an arbitrary order of maximal rank and
$\Da'$ is the intersection of the maximal orders containing $\Da$,
we have $F(\Da'|\Ha)\subseteq F(\Da|\Ha)$, so at least an
effective lower bound for the representation field can be obtain
by the methods of the present paper for any order of maximal rank.
This can be used in some cases to prove selectivity.

\section{Trees and branches}

In all of this section, we let $K$ be a local field with ring of
integers $\oink_K$ and maximal ideal $m_K=\pi\oink_K$. Let $\mathfrak{T}$ be
the Bruhat-Tits tree of $PGL_2(K)$, i.e., the vertices of
$\mathfrak{T}$ are the maximal orders in $\matrici_2(K)$, while
two of them, $\Da_1$ and $\Da_2$ are joined by an edge if and only
if $[\Da_1:\Da_1\cap\Da_2]=[\oink_K:m_K]$ (\cite{trees}, \S II.1).
Recall that every maximal order has the form
$\Da_{\Lambda}=\{x\in\matrici_2(K)|x\Lambda\subseteq\Lambda\}$ for
some lattice $\Lambda\subseteq K^2$, and $\Da_\Lambda=\Da_{\Lambda'}$ if and only if $\Lambda'=\lambda\Lambda$ for some $\lambda\in K^*$. For any order $\Ha$ (of
arbitrary rank) in $\matrici_2(K)$, we define
$\Ha^{[s]}=\oink_K+\pi^s\Ha$.

\begin{prop}\label{ll1}
For every non-negative integers $s$ and $t$, the following
properties hold:
\begin{enumerate}\item If
$\Ha^{[t]}\subseteq\Ha_1^{[t]}$, for some integer $t\geq0$, then
$\Ha^{[s]}\subseteq\Ha_1^{[s]}$ for any integer $s\geq0$.
\item $(\Ha_1\cap\Ha_2)^{[s]}=\Ha_1^{[s]}\cap\Ha_2^{[s]}$. \item
$(\Ha^{[s]})^{[t]}=\Ha^{[s+t]}$.
\end{enumerate}\end{prop}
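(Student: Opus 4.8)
The plan is to factor out the one genuinely substantive fact behind all three parts and then dispatch each part quickly. Part (3) is a pure $\oink_K$-module computation: $(\Ha^{[s]})^{[t]}=\oink_K+\pi^t(\oink_K+\pi^s\Ha)=\oink_K+\pi^t\oink_K+\pi^{s+t}\Ha=\oink_K+\pi^{s+t}\Ha=\Ha^{[s+t]}$, using only $\pi^t\oink_K\subseteq\oink_K$. One inclusion of (2) is equally immediate: $\oink_K+\pi^s(\Ha_1\cap\Ha_2)$ is contained in each $\oink_K+\pi^s\Ha_i$, hence in their intersection. So the real content is the reverse inclusion in (2) together with the whole of (1).

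Both of those reduce to the following observation, which I would isolate first: if $h\in\Ha$ and $h'\in\Ha'$ for orders $\Ha,\Ha'$ in $\matrici_2(K)$, and $h-h'=\lambda\cdot 1$ for some scalar $\lambda\in K$, then $\lambda\in\oink_K$. The reason is that $h$ and $h'$ are integral over $\oink_K$, so their reduced characteristic polynomials $T^2-\mathrm{tr}(\cdot)T+\det(\cdot)$ lie in $\oink_K[T]$ (as $\oink_K$ is integrally closed); passing to $\bar K$, if $\mu$ is an eigenvalue of $h'$ then $\mu+\lambda$ is an eigenvalue of $h=h'+\lambda\cdot 1$, and both are integral over $\oink_K$, so $\lambda=(\mu+\lambda)-\mu$ is integral over $\oink_K$ and therefore lies in $\oink_K$. (Equivalently, $2\lambda=\mathrm{tr}(h)-\mathrm{tr}(h')$ and $\lambda^2+\mathrm{tr}(h')\lambda=\det(h)-\det(h')$ exhibit $\lambda$ as a root of a monic quadratic over $\oink_K$.) This is the only step where ``order'' is used rather than merely ``lattice'', and it is the part I expect to be the crux; the natural worry that dividing by $\pi^s$ might enlarge the intersection in (2) is resolved precisely by the fact that an order contains no non-integral scalar.

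Granting this, part (1) goes as follows. I would first show that, for a \emph{single} $t\geq 0$, the containment $\Ha^{[t]}\subseteq\Ha_1^{[t]}$ is equivalent to $\Ha\subseteq\Ha_1$: the implication $\Ha\subseteq\Ha_1\Rightarrow\Ha^{[t]}\subseteq\Ha_1^{[t]}$ is trivial, while conversely, given $h\in\Ha$, write $\pi^t h=a+\pi^t h_1$ with $a\in\oink_K$ and $h_1\in\Ha_1$; then $h-h_1=a\pi^{-t}$ is a scalar, hence in $\oink_K\subseteq\Ha_1$ by the observation, so $h\in\Ha_1$. Thus $\Ha^{[t]}\subseteq\Ha_1^{[t]}$ for some $t\geq 0$ forces $\Ha\subseteq\Ha_1$, and this in turn gives $\Ha^{[s]}\subseteq\Ha_1^{[s]}$ for every $s\geq 0$. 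For the remaining inclusion in (2), take $x\in\Ha_1^{[s]}\cap\Ha_2^{[s]}$ and write $x=a_i+\pi^s h_i$ with $a_i\in\oink_K$, $h_i\in\Ha_i$ for $i=1,2$; then $\pi^s(h_1-h_2)=a_2-a_1\in\oink_K$, so $h_1-h_2$ is a scalar and hence lies in $\oink_K\subseteq\Ha_2$ by the observation, giving $h_1=h_2+(h_1-h_2)\in\Ha_1\cap\Ha_2$ and therefore $x=a_1+\pi^s h_1\in\oink_K+\pi^s(\Ha_1\cap\Ha_2)=(\Ha_1\cap\Ha_2)^{[s]}$. All remaining verifications are formal.
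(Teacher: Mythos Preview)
Your proof is correct and follows essentially the same route as the paper: both isolate the one nontrivial point that a scalar difference of two integral elements must lie in $\oink_K$, then use it to get the hard inclusion in (2) and part (1), with (3) and the easy inclusion in (2) being formal. You are simply more explicit than the paper---you spell out the integrality argument via characteristic polynomials and you explicitly prove the equivalence $\Ha^{[t]}\subseteq\Ha_1^{[t]}\iff\Ha\subseteq\Ha_1$, whereas the paper dispatches (1) with ``similar to (2)''.
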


\begin{proof}
Property (3) and the case $t=0$ of the first statement are
trivial, and therefore
$(\Ha_1\cap\Ha_2)^{[s]}\subseteq\Ha_1^{[s]}\cap\Ha_2^{[s]}$. Next
we prove
$\Ha_1^{[s]}\cap\Ha_2^{[s]}\subseteq(\Ha_1\cap\Ha_2)^{[s]}$.
Assume $\lambda+\pi^s\rho=\mu+\pi^s\sigma$, where
$\lambda,\mu\in\oink_K$, $\rho\in\Ha_1$, and $\sigma\in\Ha_2$.
Then $\rho$ and $\sigma$ commute, whence
$\sigma-\rho=\pi^{-s}(\lambda-\mu)$ is integral over $\oink_K$. We
conclude that $\sigma-\rho\in\oink_K$, whence (2) follows. The
general proof of (1) is similar.
\end{proof}

For any order $\Ha\subseteq\matrici_2(K)$ and any integer $r\geq0$
we define the set
$$S_r(\Ha)=\{\Da\in V(\mathfrak{T})|\Ha\subseteq
\Da^{[r]}\},$$ and call it the $r$-branch of $\Ha$. Next result
follows easily from the definition:
\begin{prop}\label{basics} The following properties hold for any
order $\Ha\subseteq\matrici_2(K)$:
\begin{enumerate}
\item $S_0(\Ha)$ is non-empty. \item For every triple of integers
$(r,k,t)$ we have $S_{r+t}(\Ha^{[k+t]})=S_r(\Ha^{[k]})$. \item If
$\Ha\subseteq \Ha'$, then $S_r(\Ha)\supseteq S_r(\Ha')$ for any
integer $r$. \item If $\Ha'$ is the intersection of a family of
maximal orders, and if $S_0(\Ha)\supseteq S_0(\Ha')$, then
$\Ha\subseteq \Ha'$. \item $\Ha\subseteq\Da^{[r]}$ for some
Eichler order $\Da$ of level $d$ if and only if $S_r(\Ha)$
contains two vertices at distance $d$.
%\item $\Da_\Lambda\in
%S_r(\Ha)$ if and only if for every $h\in\Ha$ there exists
%$\alpha\in\oink_K$ such that
%$(h-\alpha)\Lambda\subseteq\pi^r\Lambda$.
\end{enumerate}
\end{prop}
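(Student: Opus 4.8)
The plan is to prove Proposition \ref{basics} statement by statement, since each part is either a direct consequence of the definitions or a short application of Proposition \ref{ll1} and standard facts about the tree $\mathfrak{T}$. For (1), I would use that $S_0(\Ha) = \{\Da \in V(\mathfrak{T}) \mid \Ha \subseteq \Da\}$: since $\Ha$ is an order in $\matrici_2(K)$, it is contained in some maximal order (any order is contained in a maximal one by a Zorn's lemma / finiteness argument, or because $\Ha$ stabilizes a lattice), hence $S_0(\Ha) \neq \emptyset$. For (2), I would unwind the definition: $\Da \in S_{r+t}(\Ha^{[k+t]})$ means $\Ha^{[k+t]} \subseteq \Da^{[r+t]}$, i.e.\ $(\Ha^{[k]})^{[t]} \subseteq (\Da^{[r]})^{[t]}$ by part (3) of Proposition \ref{ll1}, and by part (1) of that proposition applied in both directions (with $s = 0$) this is equivalent to $\Ha^{[k]} \subseteq \Da^{[r]}$, which is $\Da \in S_r(\Ha^{[k]})$. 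For (3), if $\Ha \subseteq \Ha'$ and $\Da \in S_r(\Ha')$ then $\Ha \subseteq \Ha' \subseteq \Da^{[r]}$, so $\Da \in S_r(\Ha)$.

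The two substantive parts are (4) and (5). For (4), suppose $\Ha' = \bigcap_{i} \Da_i$ with each $\Da_i$ maximal, and $S_0(\Ha) \supseteq S_0(\Ha')$. Each $\Da_i$ contains $\Ha'$, so $\Da_i \in S_0(\Ha') \subseteq S_0(\Ha)$, which means $\Ha \subseteq \Da_i$ for every $i$; intersecting over $i$ gives $\Ha \subseteq \bigcap_i \Da_i = \Ha'$. For (5), I would use the description of Eichler orders in terms of the tree: an Eichler order of level $d$ is exactly the intersection $\Da_1 \cap \Da_2$ of two maximal orders at distance $d$ in $\mathfrak{T}$, and more precisely the intersection of all maximal orders lying on the geodesic path joining them (this is standard; cf.\ \cite{trees}). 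So if $\Ha \subseteq \Da^{[r]}$ for an Eichler order $\Da = \Da_1 \cap \Da_2$ of level $d$, then applying part (2) of Proposition \ref{ll1} to $\Da^{[r]} = (\Da_1 \cap \Da_2)^{[r]} = \Da_1^{[r]} \cap \Da_2^{[r]}$, we get $\Ha \subseteq \Da_1^{[r]}$ and $\Ha \subseteq \Da_2^{[r]}$, i.e.\ $\Da_1, \Da_2 \in S_r(\Ha)$ are two vertices at distance $d$. Conversely, if $\Da_1, \Da_2 \in S_r(\Ha)$ with $\mathrm{dist}(\Da_1,\Da_2) = d$, set $\Da = \Da_1 \cap \Da_2$, an Eichler order of level $d$; then $\Ha \subseteq \Da_1^{[r]} \cap \Da_2^{[r]} = (\Da_1 \cap \Da_2)^{[r]} = \Da^{[r]}$ again by Proposition \ref{ll1}(2).

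I expect the main obstacle to be the passage in (5) that identifies ``$\Ha \subseteq \Da^{[r]}$ for $\Da$ Eichler of level $d$'' with ``$S_r(\Ha)$ contains two vertices at distance $d$'' in the direction that requires knowing every Eichler order of level $d$ arises as an intersection of two maximal orders at tree-distance $d$. This is a well-known structural fact about Eichler orders over a local field, so I would cite it rather than reprove it, but care is needed because there may be more than one pair $\{\Da_1,\Da_2\}$ at distance $d$ whose intersection equals a given Eichler order (all such pairs are the endpoints of the unique ``core'' geodesic), and one should be slightly careful that $S_r(\Ha)$ containing \emph{some} pair at distance $d$ is exactly what is needed — which it is, since any such pair produces an Eichler order of level $d$ containing $\Ha$ after the $[r]$-operation. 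A secondary point worth a sentence is the ``general proof of (1)'' of Proposition \ref{ll1} invoked implicitly in part (2) above: the argument there (commuting elements, integrality over $\oink_K$) extends verbatim, and I would simply note that it is what makes the equivalence $\Ha^{[k+t]} \subseteq \Da^{[r+t]} \Leftrightarrow \Ha^{[k]} \subseteq \Da^{[r]}$ go through. Everything else is bookkeeping with the definitions.
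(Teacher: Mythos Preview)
Your proposal is correct and matches the paper's approach: the paper gives no explicit proof, stating only that the proposition ``follows easily from the definition,'' and your argument is precisely the natural unpacking of that claim via Proposition~\ref{ll1} and the standard identification of Eichler orders of level $d$ with intersections of two maximal orders at tree-distance $d$. One harmless slip: in your discussion of (5), for $d>0$ the pair $\{\Da_1,\Da_2\}$ with $\Da_1\cap\Da_2=\Da$ is in fact unique (the endpoints of the geodesic of maximal orders containing $\Da$), so your caveat about multiple pairs is unnecessary, though it does not affect the argument.
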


A set of vertices in a tree is said to be connected if for every
pair of its points it contains every vertex in the unique path
joining them. Let $\Da,\Da'\in S_r(\Ha)$. Then
$\Ha\subseteq\oink_K+\pi^r(\Da\cap\Da')$. Now recall that the
Eichler order $\Da\cap\Da'$ is contained in every order in the
path joining $\Da$ and $\Da'$. Next result follows:

\begin{prop}
For any order $\Ha$ and any integer $r$, the branch $S_r(\Ha)$ is
connected.\end{prop}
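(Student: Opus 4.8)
The plan is to show that $S_r(\Ha)$ is connected by reducing to the two-vertex case already essentially handled in the discussion preceding the statement. Since the property of a vertex set being connected is equivalent to the statement that for every pair of vertices $\Da,\Da' \in S_r(\Ha)$ the entire geodesic $[\Da,\Da']$ lies in $S_r(\Ha)$, it suffices to fix such a pair and prove that every vertex $\Da''$ on the path from $\Da$ to $\Da'$ satisfies $\Ha \subseteq (\Da'')^{[r]}$.

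First I would invoke the observation immediately preceding the proposition: if $\Da,\Da' \in S_r(\Ha)$, then by Proposition \ref{ll1}(2) we have $\Ha \subseteq \Da^{[r]} \cap (\Da')^{[r]} = (\Da\cap\Da')^{[r]} = \oink_K + \pi^r(\Da\cap\Da')$, where $\Da\cap\Da'$ is the Eichler order attached to the path $[\Da,\Da']$. Next I would use the standard fact, recalled in the excerpt, that the Eichler order $\Da\cap\Da'$ is contained in every maximal order $\Da''$ lying on the path joining $\Da$ and $\Da'$; this is the defining property of Eichler orders as intersections along a geodesic in $\mathfrak{T}$. Applying the order-reversing passage to $[r]$-orders (which is monotone in the obvious direction: $\Ha_1 \subseteq \Ha_2$ implies $\Ha_1^{[r]} \subseteq \Ha_2^{[r]}$), we get $\oink_K + \pi^r(\Da\cap\Da') \subseteq \oink_K + \pi^r\Da'' = (\Da'')^{[r]}$. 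Combining the two inclusions yields $\Ha \subseteq (\Da'')^{[r]}$, so $\Da'' \in S_r(\Ha)$, which is exactly what connectedness requires.

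There is essentially no obstacle here: the only point needing care is making explicit that $\Da\cap\Da' \subseteq \Da''$ for every $\Da''$ on the geodesic, which is a well-known structural feature of the Bruhat--Tits tree and Eichler orders (one can see it lattice-theoretically: if $\Da = \Da_\Lambda$ and $\Da' = \Da_{\Lambda'}$ with $\Lambda \supseteq \Lambda' \supseteq \pi^d\Lambda$ and $d = \dist(\Da,\Da')$, then the vertices on the path correspond to the lattices $\Lambda_i$ with $\Lambda = \Lambda_0 \supsetneq \Lambda_1 \supsetneq \cdots \supsetneq \Lambda_d = \Lambda'$, and $x\Lambda \subseteq \Lambda$, $x\Lambda' \subseteq \Lambda'$ forces $x\Lambda_i \subseteq \Lambda_i$ for all intermediate $i$). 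Everything else is a one-line chain of inclusions, so the proof is short; the substance was already extracted in the remarks preceding the statement, and the proposition is really just the packaging of that remark together with the monotonicity of $\Ha \mapsto \Ha^{[r]}$.
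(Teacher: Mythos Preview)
Your proof is correct and follows exactly the paper's approach: the paper's argument is precisely the two-line remark preceding the proposition (that $\Ha\subseteq\oink_K+\pi^r(\Da\cap\Da')$ and that the Eichler order $\Da\cap\Da'$ lies in every maximal order on the path), which you have simply unpacked in more detail.
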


\begin{prop}\label{lt3}
For any order $\Ha$ and any integer $t$, the branch
$S_0(\Ha^{[t]})$ contains exactly the maximal orders at a distance
$\leq t$ of some maximal order in $S_0(\Ha)$.\end{prop}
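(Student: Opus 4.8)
The plan is to prove the two inclusions separately, using the translation formalism of Proposition \ref{basics}(2) together with the combinatorial description of branches as connected sets. Write $r$ for the radius and note that by Proposition \ref{basics}(2) we have $S_0(\Ha^{[t]})=S_t(\Ha)$, so the claim is equivalent to: $S_t(\Ha)$ consists exactly of those vertices $\Da$ for which $d(\Da,\Da_0)\leq t$ for some $\Da_0\in S_0(\Ha)$.

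For the inclusion $\supseteq$, suppose $\Da_0\in S_0(\Ha)$ and $d(\Da,\Da_0)=e\leq t$. Then $\Ha\subseteq\Da_0$, and I would use the standard fact (already invoked in the paragraph preceding Proposition 2.3) that the Eichler order $\Da\cap\Da_0$ of level $e$ satisfies $\Da_0\subseteq\Da^{[e]}$: indeed an order at distance $e$ from $\Da_0$ is, after choosing a suitable basis, of the form $\bbmatrix{\oink_K}{\pi^{-e}\oink_K}{\pi^e\oink_K}{\oink_K}$ relative to $\Da_0=\matrici_2(\oink_K)$, and conjugating by $\mathrm{diag}(1,\pi^e)$ identifies $\Da^{[e]}$ with $\oink_K+\pi^e\matrici_2(\oink_K)=\Da_0^{[e]}$ — so $\Da_0\subseteq\Da^{[e]}\subseteq\Da^{[t]}$ by Proposition \ref{ll1}(1) (monotonicity in the exponent, applied to $\Da^{[e]}\subseteq\Da^{[e]}$ gives $\Da^{[t]}\supseteq$... ) — more precisely, $e\leq t$ gives $\Da^{[e]}\supseteq\Da^{[t]}$ is the wrong direction, so instead I note $\Da^{[e]}=\oink_K+\pi^e\Da\supseteq\oink_K+\pi^t\Da=\Da^{[t]}$, hence I actually need $\Da_0\subseteq\Da^{[t]}$, which follows because $\Da_0\subseteq\Da^{[e]}$ and we may absorb: $\Da_0=\oink_K+\pi^e M$ for a lattice $M$ with $\pi^e M\subseteq\Da_0\subseteq\Da^{[e]}$... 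This needs the sharper statement that $\Da^{[e]}\subseteq\Da_0$ together with $[\Da_0:\Da^{[e]}]$-control; cleanly, one checks directly in the matrix model that $\oink_K+\pi^e\Da_0 \subseteq \Da^{[e]}$, i.e. $\Da_0^{[e]}\subseteq\Da^{[e]}$ when $d(\Da,\Da_0)=e$, and then for $t\geq e$, $\Da_0^{[t]}=(\Da_0^{[e]})^{[t-e]}\subseteq(\Da^{[e]})^{[t-e]}=\Da^{[t]}$ by Proposition \ref{ll1}(1),(3). Since $\Ha\subseteq\Da_0$ implies $\Ha^{[t]}\subseteq\Da_0^{[t]}\subseteq\Da^{[t]}$, and trivially $\Ha\subseteq\Ha^{[t]}$... actually $\Ha\subseteq\Ha^{[t]}$ is false in general; rather $\Ha\subseteq\Da_0$ gives $\oink_K+\pi^t\Ha\subseteq\oink_K+\pi^t\Da_0$, i.e. $\Ha^{[t]}\subseteq\Da_0^{[t]}\subseteq\Da^{[t]}$, hence $\Da\in S_0(\Ha^{[t]})=S_t(\Ha)$, giving $\supseteq$.

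For the inclusion $\subseteq$, suppose $\Da\in S_t(\Ha)$, i.e. $\Ha\subseteq\Da^{[t]}$. I want to produce $\Da_0\in S_0(\Ha)$ with $d(\Da,\Da_0)\leq t$. The natural candidate is to use that $S_0(\Ha)$ is a non-empty connected set (Propositions \ref{basics}(1) and 2.3) and take $\Da_0$ to be the vertex of $S_0(\Ha)$ nearest to $\Da$; call $e=d(\Da,\Da_0)$. The task is to show $e\leq t$. Pick any $\Da_1\in S_0(\Ha)$; by connectedness of $S_0(\Ha)$ the whole geodesic from $\Da_0$ to $\Da_1$ lies in $S_0(\Ha)$, and by the nearest-point property the geodesic from $\Da$ to $\Da_1$ passes through $\Da_0$ (standard tree geometry: projection onto a connected subtree). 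Now $\Ha\subseteq\Da_0\cap\Da_1$; combining with $\Ha\subseteq\Da^{[t]}$ one gets $\Ha\subseteq(\Da_0\cap\Da_1)\cap\Da^{[t]}$. Here is where I would push: if $e>t$, the vertex $\Da_0$ lies strictly outside the ball of radius $t$ around $\Da$; I claim this forces $\Ha\not\subseteq\Da^{[t]}$, contradicting the hypothesis. Concretely, using Proposition \ref{basics}(5), the condition $\Ha\subseteq\Da^{[t]}$ says $S_t(\Ha)\ni\Da$, while $\Da_0\in S_0(\Ha)\subseteq S_t(\Ha)$; since $S_t(\Ha)$ is connected it must contain the geodesic from $\Da$ to $\Da_0$, and in particular contain a vertex $\Da'$ with $d(\Da,\Da')=t$ lying on the segment $[\Da,\Da_0]$; but then $d(\Da',\Da_0)=e-t>0$ with $\Da'\in S_t(\Ha)$, which does not yet contradict anything. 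The real input must be that being in $S_t(\Ha)$ for a vertex far from $S_0(\Ha)$ is impossible: if $\Ha\subseteq\Da^{[t]}$ then $\oink_K+\pi^{-t}(\Ha\cap(1+\pi^t\Da))$... This is the genuinely delicate point. I expect the clean argument is: $\Ha\subseteq\Da^{[t]}$ means $\Ha=\oink_K\cdot 1+\pi^t\Ha'$ with $\Ha'\subseteq\Da$ for the appropriate $\oink_K$-lattice $\Ha'$ generated by $\pi^{-t}$ times the non-scalar part; equivalently $\Ha^{[-t]}\subseteq\Da$ makes sense as soon as $\Ha\subseteq\Da^{[t]}$, and then $\Da\in S_0(\Ha^{[-t]})$. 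So the chain of equivalences reads $\Da\in S_t(\Ha)\iff\Da\in S_0(\Ha^{[-t]})$, and one is reduced to: every vertex in $S_0(\Ha^{[-t]})$ is within distance $t$ of $S_0(\Ha)$ — which by symmetry is the same statement with $\Ha$ replaced by $\Ha^{[-t]}$ and the roles reversed, i.e. it is exactly the $\supseteq$ inclusion already proved (since $S_0(\Ha)=S_0((\Ha^{[-t]})^{[t]})$ contains all vertices within $t$ of $S_0(\Ha^{[-t]})$), combined with the observation that $S_0(\Ha^{[-t]})$ cannot be strictly larger than "the $t$-ball around $S_0(\Ha)$" because enlarging the radius by $t$ then shrinking by $t$ via Proposition \ref{ll1}(3) is idempotent on branches of orders that are intersections of maximal orders — here one uses Proposition \ref{basics}(4). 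I would organize the write-up around this duality: prove $\supseteq$ directly in the matrix model, then deduce $\subseteq$ by applying $\supseteq$ to $\Ha^{[-t]}$ and invoking Propositions \ref{ll1}(3) and \ref{basics}(2),(4) to close the loop.

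The main obstacle, as the meandering above indicates, is pinning down the correct direction of the containment $\Da_0^{[e]}\subseteq\Da^{[e]}$ versus $\Da^{[e]}\subseteq\Da_0^{[e]}$ when $d(\Da,\Da_0)=e$, and making the "shift by $-t$" operation $\Ha\mapsto\Ha^{[-t]}$ rigorous (it is only defined when $\Ha\subseteq\Da^{[t]}$ for the relevant $\Da$, so it is not a global operation on orders). I expect both are handled cleanly by working in an explicit $\oink_K$-basis adapted to the geodesic $[\Da,\Da_0]$, where $\Da_0=\matrici_2(\oink_K)$ and $\Da=\bbmatrix{\oink_K}{\pi^{-e}\oink_K}{\pi^e\oink_K}{\oink_K}$, so that $\Da^{[r]}=\bbmatrix{\oink_K}{\pi^{r-e}\oink_K}{\pi^{r+e}\oink_K}{\oink_K}$ and the containments become entrywise comparisons of exponents; then $\Ha\subseteq\Da^{[t]}$ for $t<e$ is visibly incompatible with $\Ha\subseteq\Da_0$ unless $\Ha$'s off-diagonal entries are scalars, pinning $e\leq t$.
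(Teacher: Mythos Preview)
Your proposal has a genuine error at the very first step: the claimed identity $S_0(\Ha^{[t]})=S_t(\Ha)$ is false. Proposition~\ref{basics}(2) reads $S_{r+t}(\Ha^{[k+t]})=S_r(\Ha^{[k]})$; setting $r=k=0$ yields $S_t(\Ha^{[t]})=S_0(\Ha)$, not your claim. In fact the two sets move in opposite directions relative to $S_0(\Ha)$: since $\Ha^{[t]}\subseteq\Ha$ one has $S_0(\Ha^{[t]})\supseteq S_0(\Ha)$, whereas since $\Da^{[t]}\subseteq\Da$ one has $S_t(\Ha)\subseteq S_0(\Ha)$. A concrete counterexample is $\Ha=\Da=\matrici_2(\oink_K)$ with $t=1$: then $S_0(\Ha)=\{\Da\}$ and $S_1(\Ha)=\emptyset$, while $S_0(\Ha^{[1]})$ is the closed ball of radius~$1$ around $\Da$ (this is exactly what the proposition asserts).

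This error vitiates the harder inclusion $\subseteq$. What must be shown is that $\Ha^{[t]}\subseteq\Da$ forces $\Da$ to lie within distance $t$ of some vertex of $S_0(\Ha)$; you instead argue from the hypothesis $\Ha\subseteq\Da^{[t]}$, which already gives $\Da\in S_0(\Ha)$ and makes the desired conclusion trivial. Your attempted rescue via a putative operation $\Ha\mapsto\Ha^{[-t]}$ together with Proposition~\ref{basics}(4) cannot work as stated: $\Ha^{[-t]}$ is not a well-defined order, and item~(4) needs the comparison order to be an intersection of maximal orders, a hypothesis unavailable for an arbitrary $\Ha$. (Your $\supseteq$ direction is essentially salvageable---from $\Ha\subseteq\Da_0$ and $d(\Da,\Da_0)=e\le t$ one gets $\Ha^{[t]}\subseteq\Ha^{[e]}\subseteq\Da_0^{[e]}\subseteq\Da$---although your intermediate assertion $\Da_0^{[t]}\subseteq\Da^{[t]}$ is itself false for $e>0$, as an entrywise check in your own matrix model shows.)

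The paper's route avoids all of this. It reduces to $t=1$ via Proposition~\ref{ll1}(3) and then argues directly with lattices. For the inclusion $\subseteq$: if $\Da=\Da_\Lambda\in S_0(\Ha^{[1]})$ then $\pi h\Lambda\subseteq\Lambda$ for every $h\in\Ha$, so the $\Ha$-invariant lattice $\Lambda':=\Ha\Lambda$ satisfies $\Lambda\subseteq\Lambda'\subseteq\pi^{-1}\Lambda$; the three possible indices yield respectively $\Da\in S_0(\Ha)$, a neighbor $\Da_{\Lambda'}\in S_0(\Ha)$, or a contradiction. This lattice construction is the missing idea.
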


\begin{proof}
It suffices to prove the case $t=1$ and use (3) in Proposition
\ref{ll1}. Now observe that two maximal orders $\Da$ and $\Da_1$
are neighbors if and only if, in some basis $\{v,w\}$ they
corresponds to the lattices $\Lambda=\oink_Kv+\oink_Kw$ and
$\Lambda_1=\oink_Kv+\pi\oink_Kw$ respectively. The fact that
$\Da_1\in S_0(\Ha)$ means that for every element $h\in \Ha$ we
have $h\Lambda_1\subseteq\Lambda_1$. Note that $\pi
h\Lambda_1\subseteq\Lambda_1$, and $h(\pi\Lambda)\subseteq
h\Lambda_1\subseteq\Lambda_1\subseteq\Lambda$ for every $h\in\Ha$.
Then $\Ha^{[1]}\Lambda_1\subseteq\Lambda_1$ and
$\Ha^{[1]}\Lambda\subseteq\Lambda$, whence sufficiency follows. On
the other hand, assume that $\Da\in S_0(\Ha^{[1]})$, and set
$\Da=\Da_\Lambda$.
 Then for every $h\in\Ha$, we have
$\pi h\in\Ha^{[1]}$, whence $\pi h\Lambda=\Lambda$, and therefore
the $\Ha$-invariant lattice $\Lambda'=\Ha\Lambda$ is contained in
$\pi^{-1}\Lambda$. There are three possibilities:
\begin{enumerate}
\item If $\Lambda'=\Lambda$, then $\Da\in S_0(\Ha)$.
 \item If $[\Ha':\Ha]=|\oink_K/m_K|$, then the maximal order $\Da'$
 corresponding to $\Lambda'$ is a neighbor of $\Da$ and $\Da'\in
 S_0(\Ha)$.\item If $\Lambda'=\pi^{-1}\Lambda$, then
 $\pi^{-1}\Lambda$,
 and therefore also $\Lambda$, are $\Ha$-invariant. This is a
 contradiction, so this case cannot hold.
\end{enumerate}
The result follows.
\end{proof}

\begin{cor} Let $\Da$ be a maximal order. Then $S_0(\Da^{[t]})$ is the set of orders at a distance
at most $t$ from $\Da$.
\end{cor}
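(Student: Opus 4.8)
The plan is to derive this directly from Proposition \ref{lt3}, the only work being to identify $S_0(\Da)$ when $\Da$ is maximal. First I would unwind the definition: $S_0(\Ha)=\{\Da'\in V(\mathfrak{T})\mid\Ha\subseteq\Da'^{[0]}\}$, and since $\Da'^{[0]}=\oink_K+\Da'=\Da'$, we get $S_0(\Ha)=\{\Da'\mid\Ha\subseteq\Da'\}$. Specializing to $\Ha=\Da$ with $\Da$ maximal, the condition $\Da\subseteq\Da'$ between two maximal orders forces $\Da=\Da'$ by maximality of $\Da$; hence $S_0(\Da)=\{\Da\}$, a single vertex.

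Next I would apply Proposition \ref{lt3} with this choice of order, noting that $\Da^{[t]}$ here is precisely $\Ha^{[t]}$ for $\Ha=\Da$. That proposition asserts that $S_0(\Ha^{[t]})$ consists of exactly those maximal orders lying at distance $\leq t$ from some maximal order in $S_0(\Ha)$. Since $S_0(\Da)=\{\Da\}$, the phrase ``some maximal order in $S_0(\Da)$'' reduces to $\Da$ itself, and we conclude that $S_0(\Da^{[t]})$ is exactly the set of maximal orders at distance at most $t$ from $\Da$, which is the claim.

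There is essentially no obstacle here: the content is entirely contained in Proposition \ref{lt3}, and the corollary is just the instance where the $0$-branch of the starting order is a single point. The only thing worth stating explicitly is the reduction $S_0(\Da)=\{\Da\}$, which rests solely on the maximality of $\Da$ — two lattices give the same maximal order iff they are homothetic, but here we only need that no maximal order properly contains another. I would write the proof in one or two sentences accordingly.

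\begin{proof}
Since $\Da^{[0]}=\oink_K+\Da=\Da$, we have $S_0(\Da)=\{\Da'\in V(\mathfrak{T})\mid\Da\subseteq\Da'\}$. As $\Da$ is maximal, $\Da\subseteq\Da'$ implies $\Da=\Da'$, so $S_0(\Da)=\{\Da\}$. Applying Proposition \ref{lt3} with the order $\Da$ in place of $\Ha$, the branch $S_0(\Da^{[t]})$ consists of exactly the maximal orders at distance $\leq t$ from some element of $S_0(\Da)=\{\Da\}$, that is, at distance at most $t$ from $\Da$.
\end{proof}
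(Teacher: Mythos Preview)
Your proof is correct and matches the paper's approach: the corollary is stated without proof immediately after Proposition~\ref{lt3}, precisely because it is the specialization of that proposition to the case $S_0(\Da)=\{\Da\}$, which you justify cleanly via maximality.
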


In particular, a maximal order contains $\Da^{[t]}$ if and only if
it is located at a distance not bigger than $t$ in the graph.
In fact, a stronger statement is true.

\begin{lem}\label{int1} $\Da^{[t]}$ is the intersection of all orders at a distance
at most $t$ from $\Da$.
\end{lem}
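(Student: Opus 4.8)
The plan is to prove the two inclusions separately. For the easy direction, observe that $\Da^{[t]}=\oink_K+\pi^t\Da\subseteq\Da_1^{[t]}$ whenever $\Da_1$ is at distance $\leq t$ from $\Da$: indeed, by the Corollary, such a $\Da_1$ lies in $S_0(\Da^{[t]})$, which unwinding the definition of the branch is exactly the statement $\Da^{[t]}\subseteq\Da_1^{[0]}=\Da_1$. Hence $\Da^{[t]}$ is contained in every maximal order at distance $\leq t$, and therefore in their intersection. (One should note here that the intersection is taken over maximal orders; the lemma as stated says ``all orders at distance at most $t$,'' but the relevant bound is realized already by the maximal ones, and an intersection over a larger family is only smaller, so it suffices to bound the intersection over maximal orders.)

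For the reverse inclusion, let $x$ belong to every maximal order $\Da_1$ with $d(\Da,\Da_1)\leq t$; I want to show $x\in\oink_K+\pi^t\Da$. Fix the lattice $\Lambda$ with $\Da=\Da_\Lambda$ and a basis $\{v,w\}$ of $\Lambda$. The maximal orders at distance exactly $t$ from $\Da$ correspond (up to scaling) to lattices of the form $\Lambda_{u,j}$ obtained by shrinking $\Lambda$ in a single ``direction'' determined by a boundary point of the tree; concretely, for a vector $u\in\Lambda$ primitive mod $\pi$ and each $j$ with $0\leq j\leq t$ one gets the lattice $\pi^{\,?}$-homothety class represented by $\oink_K u+\pi^t(\text{complement})$, and running over enough such lattices pins down the matrix entries of $x$. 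The computation to carry out is: writing $x$ in the basis $\{v,w\}$ as a $2\times2$ matrix, membership in $\Da_{\Lambda_1}$ for the neighbours in the two coordinate directions forces the off-diagonal entries to lie in $\pi^t\oink_K$, and membership in the order attached to the ``anti-diagonal'' ray (the lattice spanned by $v+w$ and a $\pi^t$-scaled complement) forces the difference of the diagonal entries into $\pi^t\oink_K$ as well; together these say $x\in\oink_K\cdot\mathrm{Id}+\pi^t\matrici_2(\oink_K)=\oink_K+\pi^t\Da$.

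A cleaner way to organize the reverse inclusion, which I would actually use to avoid coordinate bookkeeping, is to invoke Proposition \ref{basics}(4) together with Proposition \ref{lt3}. Let $\Ha$ be the intersection of all maximal orders at distance $\leq t$ from $\Da$; this $\Ha$ is by construction an intersection of maximal orders. Then $\Da^{[t]}\subseteq\Ha$ by the easy direction, so by Proposition \ref{basics}(3) we get $S_0(\Ha)\supseteq S_0(\Da^{[t]})$, which by the Corollary is the ball of radius $t$ about $\Da$. Conversely every maximal order containing $\Ha$ lies in that same ball (it contains $\Da^{[t]}$... no—one must check $S_0(\Ha)$ is \emph{contained} in the ball: a maximal order $\Da_1\supseteq\Ha$ contains in particular the intersection, hence... ) so $S_0(\Ha)$ equals the ball of radius $t$, which is exactly $S_0(\Da^{[t]})$; since $\Da^{[t]}$ is itself an intersection of maximal orders by the first part, Proposition \ref{basics}(4) applied with the roles $\Ha\leadsto\Da^{[t]}$, $\Ha'\leadsto\Ha$ — or rather the symmetric use giving $\Ha\subseteq\Da^{[t]}$ — yields $\Ha\subseteq\Da^{[t]}$, completing the equality. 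The main obstacle is precisely making this last symmetric application of Proposition \ref{basics}(4) airtight: one needs to know independently that $\Da^{[t]}$ is an intersection of maximal orders (which is what the easy direction plus minimality over the ball gives) before the characterization ``$S_0$ determines the order'' can be run in the direction we want; if that turns out to be circular, I would fall back on the explicit lattice computation sketched above, which is elementary though slightly tedious.
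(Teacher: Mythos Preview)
Your explicit fallback computation is correct and is essentially the paper's approach. The paper also fixes coordinates so that $\Da=\matrici_2(\oink_K)$, uses the two ``opposite'' maximal orders $\Da_1,\Da_2$ at distance $t$ along the coordinate axes to force the off-diagonal entries into $\pi^t\oink_K$, and then extracts the congruence $a\equiv b\pmod{\pi^t}$ on the diagonal. The only cosmetic difference is how this last constraint is obtained: the paper observes that conjugation by $h=\lbmatrix1101$ stabilises the ball of radius $t$ around $\Da$, hence $huh^{-1}$ also lies in the intersection, and computing $huh^{-1}\equiv\lbmatrix a{b-a}0b\pmod{\pi^t}$ forces $b\equiv a$; your version instead intersects with the maximal order attached to the lattice $\oink_K(v+w)+\pi^t\oink_Kw$. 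These are the same idea (that lattice is precisely $h^{-1}$ applied to one of the axis lattices), so the two writeups are interchangeable.

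You are right that your ``cleaner'' route via Proposition~\ref{basics}(4) is circular: applying it in the direction $\Ha\subseteq\Da^{[t]}$ requires knowing that $\Da^{[t]}$ is already an intersection of maximal orders, which is the content of the lemma. Since you flagged this yourself and the explicit computation stands on its own, there is no gap.
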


\begin{proof}
Let $\Da'$ be the intersection of all maximal orders at a distance
at most $t$ from $\Da$.
From the preceding corollary, $\Da^{[t]}\subseteq\Da'$. We prove the converse.
Let $u\in\Da'$, i.e., $u$ is contained in every maximal order at a distance
at most $t$ from $\Da$. Let $\Da_1$ and $\Da_2$ be two orders at distance $t$
from $\Da$, such that the shorter path between $\Da_1$ and $\Da_2$ passes through $\Da$.
In some choice of coordinates, the orders $\Da_1$, $\Da$, and $\Da_2$ are respectively:
$$\Da_1=\bbmatrix {\oink_K}{\pi^t\oink_K}{\oink_K}{\oink_K},\quad
\Da=\bbmatrix {\oink_K}{\oink_K}{\oink_K}{\oink_K},\quad
\Da_2=\bbmatrix {\oink_K}{\oink_K}{\pi^t\oink_K}{\oink_K}.$$ It
follows that $u\equiv\lbmatrix a00b\ (\mathrm{mod}\ \pi^t)$ for
some $a,b\in\oink_K$. Since conjugation by the element
$h=\lbmatrix 1101$ leaves invariant $\Da$, and therefore also the
set of maximal orders at a distance no bigger that $t$ from $\Da$,
the element $huh^{-1}\equiv\lbmatrix a{b-a}0b\ (\mathrm{mod}\
\pi^t)$ also belongs to $\Da'$. We conclude that $b\equiv a\
(\mathrm{mod}\ \pi^t)$, and therefore $u\in\Da^{[t]}$.
\end{proof}

A maximal order $\Da\in S_0(\Ha)$ such that every maximal order at
a distance at most $t$ from $\Da$ belongs to $S_0(\Ha)$ is said to
be $t$-deep in $S_0(\Ha)$.

\begin{cor} $S_r(\Ha)$ is the set of $r$-deep maximal orders in $S_0(\Ha)$.
\end{cor}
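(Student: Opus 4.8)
This is an immediate consequence of Lemma \ref{int1}, so the plan is simply to unwind the definitions. Fix a maximal order $\Da$ and an integer $r\geq0$. By definition, $\Da\in S_r(\Ha)$ means precisely that $\Ha\subseteq\Da^{[r]}$. The plan is to rewrite $\Da^{[r]}$ using Lemma \ref{int1}, which says that $\Da^{[r]}$ equals the intersection $\bigcap\Da'$ taken over all maximal orders $\Da'$ at distance at most $r$ from $\Da$.

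With that substitution, the containment $\Ha\subseteq\Da^{[r]}$ holds if and only if $\Ha\subseteq\Da'$ for every maximal order $\Da'$ with $d(\Da,\Da')\leq r$, which by the definition of $S_0$ is equivalent to saying that every such $\Da'$ lies in $S_0(\Ha)$. This last condition is exactly the statement that $\Da$ is $r$-deep in $S_0(\Ha)$ (note that the case $\Da'=\Da$, at distance $0$, is included, so in particular $\Da\in S_0(\Ha)$, as required in the definition of $r$-deep). Chaining these equivalences gives $\Da\in S_r(\Ha)$ if and only if $\Da$ is $r$-deep in $S_0(\Ha)$, which is the claim.

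There is no real obstacle here: the only nontrivial input, namely that $\Da^{[r]}$ is recovered as the intersection of the ball of radius $r$ around $\Da$ (and not merely contained in it), has already been established in Lemma \ref{int1} via the conjugation-by-$\lbmatrix1101$ trick. One should only take care to state the equivalence in both directions and to observe that the definition of $r$-deep already presupposes $\Da\in S_0(\Ha)$, so that no separate hypothesis is lost in the translation.
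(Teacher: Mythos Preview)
Your proposal is correct and matches the paper's intended argument exactly: the paper states this corollary without proof immediately after Lemma \ref{int1} and the definition of $r$-deep, treating it as the obvious unwinding you describe. There is nothing to add.
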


We conclude that in order to compute $S_r(\Ha^{[t]})$, it suffices
to compute $S_0(\Ha)$.

\section{relative spinor images}

In all of this section $K$ is a local field with maximal order
$\oink_K$, uniformizing parameter $\pi$, and absolute value
$x\mapsto|x|_K$. Let $\alge$ be a split quaternion $K$-algebra,
i.e., $\alge\cong\matrici_2(K)$, and let $N:\alge^*\rightarrow
K^*$ be the reduced norm. For any pair of orders $\Ha\subseteq\Da$
in $\alge$ define the local relative spinor image
$H(\Da|\Ha)\subseteq K^*$ by
$$H(\Da|\Ha)=\{N(u)|u\Ha u^{-1}\subseteq\Da\},$$
and let $H(\Da)=H(\Da|\Da)$. Recall that
$H(\Da|\Ha)=H(\Da)H(\Da|\Ha)H(\Ha)$ \cite{spinor}. Note that for
any order $\Da$ and any invertible element $u$ in $\alge$, we have
$u\Da^{[r]} u^{-1}=(u\Da u^{-1})^{[r]}$ for any non-negative
integer $r$, and the correspondence $\Da\mapsto\Da^{[r]}$ is
injective and preserves inclusions. It follows that
$H(\Da^{[r]}|\Ha^{[r]})=H(\Da|\Ha)$. In particular
$H(\Da^{[r]})=H(\Da)$.

Let $\Da=\Da_1\cap\Da_2$ be an Eichler order of level $d$. In
other words, $[\Da_i:\Da]=|\pi|_K^d$ for $i=1,2$. Note that $d$
is the distance between $\Da_1$ and $\Da_2$ in the bruhat-Tits
tree $\mathfrak{T}$ of $\mathrm{PGL}_2(K)$. Let $\Ha$ be a
suborder of $\Da^{[r]}$. Let $S_r(\Ha)$ be as defined in the
preceding section. It follows that $S_r(\Ha)$ has two points,
namely $\Da_1$ and $\Da_2$, at distance $d$. Note that
$H(\Da^{[r]})=H(\Da)\supseteq\oink_K^*K^{*2}$, since the Eichler
order $\Da$ contains a conjugate of the diagonal matrix $\mathrm{diag}(1,u)$ for every
unit $u\in\oink_K^*$. In particular, $H(\Da^{[r]})$ is either
$\oink_K^*K^{*2}$ or $K^*$.

\begin{lemma}\label{ll2}
$H(\Da^{[r]}|\Ha)=K^*$ if and only if there exist another pair
$(\Da_3,\Da_4)$ in $S_r(\Ha)\times S_r(\Ha)$ also at distance $d$
such that the distance between $\Da_1$ and $\Da_3$ is odd.
\end{lemma}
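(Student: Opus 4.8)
The statement asserts that the relative spinor image $H(\Da^{[r]}|\Ha)$ equals all of $K^*$ precisely when the branch $S_r(\Ha)$ contains two vertices $\Da_3,\Da_4$ at distance $d$ with $\Da_3$ at \emph{odd} distance from $\Da_1$. Since we already know $H(\Da^{[r]}|\Ha)$ is either $\oink_K^*K^{*2}$ or $K^*$, the whole question is detecting whether there is an element $u$ with $u\Ha u^{-1}\subseteq\Da^{[r]}$ and $|N(u)|_K$ odd, equivalently $N(u)\notin\oink_K^*K^{*2}$. The plan is to translate "$u\Ha u^{-1}\subseteq\Da^{[r]}$" into an action on branches via the identity $H(\Da^{[r]}|\Ha^{[r]})=H(\Da|\Ha)$ and the behaviour of branches under conjugation. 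Concretely, for $u\in\alge^*$ one has $u\Ha u^{-1}\subseteq\Da^{[r]}=(\Da_1\cap\Da_2)^{[r]}$ if and only if both $\Da_1$ and $\Da_2$, after applying $u^{-1}(\cdot)u$, land in $S_r(\Ha)$; that is, if and only if $u^{-1}\Da_1 u$ and $u^{-1}\Da_2 u$ are two vertices of $S_r(\Ha)$ at distance $d$ (distance is conjugation-invariant). So $H(\Da^{[r]}|\Ha)$ records exactly the reduced norms of elements $u$ carrying \emph{some} distance-$d$ pair $(\Da_3,\Da_4)$ in $S_r(\Ha)$ back onto the fixed pair $(\Da_1,\Da_2)$.

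**Key steps.** First I would record the dictionary: $PGL_2(K)$ acts transitively on ordered pairs of vertices at a fixed distance $d$ (standard for the Bruhat–Tits tree), so for any distance-$d$ pair $(\Da_3,\Da_4)$ there is $u\in\alge^*$ with $u^{-1}\Da_1u=\Da_3$, $u^{-1}\Da_2u=\Da_4$; conversely any such $u$ produces such a pair. Second, I would compute the valuation of $N(u)$ in terms of the combinatorial position of $(\Da_3,\Da_4)$. The cleanest way: the tree has a natural $\enteri$-valued "type" or parity function on vertices, and an element $u$ acts on types by the shift $\mathrm{val}_K(N(u)) \bmod 2$ — i.e. $u$ preserves or swaps the bipartition of $\mathfrak T$ according to the parity of $\mathrm{val}_K\det u$. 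Hence the distance from $\Da_1$ to $u^{-1}\Da_1 u=\Da_3$ has the same parity as $\mathrm{val}_K(N(u))$. Third, combine: $H(\Da^{[r]}|\Ha)$ contains an element of odd valuation iff some $u$ with $u^{-1}\Da_1u,u^{-1}\Da_2u\in S_r(\Ha)$ (at distance $d$) has $\mathrm{val}_K N(u)$ odd, iff (by the parity computation) there is a distance-$d$ pair $(\Da_3,\Da_4)$ in $S_r(\Ha)$ with $d(\Da_1,\Da_3)$ odd. Since $H(\Da^{[r]}|\Ha)=H(\Da)H(\Da^{[r]}|\Ha)H(\Ha)\supseteq\oink_K^*K^{*2}$ already, "contains an element of odd valuation" is exactly "equals $K^*$", which finishes both directions.

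**Main obstacle.** The delicate point is the parity bookkeeping in the converse direction — showing that $\mathrm{val}_K(N(u))$ really is congruent mod $2$ to $d(\Da_1, u^{-1}\Da_1u)$, uniformly, not just for a convenient representative. I would handle this by fixing the coordinate realisation already used in Lemma~\ref{int1}: put $\Da_1=\Da_\Lambda$ with $\Lambda=\oink_K^2$, note $u^{-1}\Da_1u=\Da_{u^{-1}\Lambda}$, and recall that the distance in $\mathfrak T$ between $\Da_{\Lambda}$ and $\Da_{\Lambda'}$ has the same parity as $\mathrm{val}_K$ of the index $[\Lambda:\Lambda\cap\Lambda']$ minus $\mathrm{val}_K$ of $[\Lambda':\Lambda\cap\Lambda']$, which in turn is $\mathrm{val}_K(\det u^{-1})=-\mathrm{val}_K(N(u))$ up to the freedom $\Lambda'\mapsto\lambda\Lambda'$ (which changes the index by an even amount, namely $2\,\mathrm{val}_K(\lambda)$). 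So the parity is well-defined and equals that of $\mathrm{val}_K N(u)$. A secondary subtlety is that one must produce a \emph{single} $u$ handling the pair $(\Da_1,\Da_2)$ simultaneously, not two separate elements; transitivity of $PGL_2(K)$ on ordered distance-$d$ pairs takes care of this, but one should note that lifting from $PGL_2(K)$ to $\alge^*=\mathrm{GL}_2(K)$ only affects $u$ by a central scalar, which multiplies $N(u)$ by a square and so does not change the parity. Everything else is formal once these two points are in place.
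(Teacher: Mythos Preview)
Your proposal is correct and follows essentially the same route as the paper: both arguments translate $u\Ha u^{-1}\subseteq\Da^{[r]}$ into the statement that the conjugated pair $(u^{-1}\Da_1u,u^{-1}\Da_2u)$ lies in $S_r(\Ha)$ at distance $d$, and then invoke the standard parity fact (the paper cites the Corollary to Prop.~1 in \S II.1.2 of Serre's \emph{Trees}, while you spell out the bipartition/type argument directly) that $\mathrm{val}_K(N(u))\equiv d(\Da_1,u^{-1}\Da_1u)\pmod 2$. The only cosmetic difference is that the paper writes the conjugating element as $\sigma$ with $\sigma\Da_1\sigma^{-1}=\Da_3$ rather than your $u^{-1}\Da_1u=\Da_3$, which is immaterial since $N(\sigma)$ and $N(\sigma^{-1})$ have the same valuation parity.
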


\begin{proof}
Assume the condition is satisfied, and take $\sigma\in GL_2(K)$
such that $\sigma\Da_1\sigma^{-1}=\Da_3$ and
$\sigma\Da_2\sigma^{-1}=\Da_4$. Then
$\Ha\subseteq(\Da^{[r]}_3\cap\Da^{[r]}_4)=\sigma\Da^{[r]}\sigma^{-1}$.
It follows that $\sigma$ is a local generator for $\Da^{[r]}|\Ha$.
Since the distance between $\Da_1$ and $\Da_3$ is odd and
$\sigma\Da_1\sigma^{-1}=\Da_3$, the reduced norm $n(\sigma)$ has
odd valuation (Corollary to Prop. 1 in \S II.1.2 of \cite{trees}).
 Assume now that $H(\Da^{[r]}|\Ha)=K^*$, so
there must exists a generator $\sigma$ with reduced norm of odd
valuation. Then let $\Da_3=\sigma\Da_1\sigma^{-1}$ and
$\Da_4=\sigma\Da_2\sigma^{-1}$. Since $\sigma$ is a generator, we
must have $\Ha\subseteq\sigma\Da^{[r]}
\sigma^{-1}=(\Da_3\cap\Da_4)^{[r]}$, while the fact that the
reduced norm of $\sigma$ has odd valuation means that the distance
from $\Da_1$ to $\Da_3$ is also odd (Corollary to Prop. 1 in \S
II.1.2 of \cite{trees}).
\end{proof}

\begin{cor}\label{cor1} If the level of $\Da=\Da_1\cap\Da_2$ is odd then
$H(\Da^{[r]}|\Ha)=K^*$.
\end{cor}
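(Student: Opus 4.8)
The plan is to read this off directly from Lemma~\ref{ll2}. We keep the standing hypothesis that $\Ha\subseteq\Da^{[r]}$, where $\Da=\Da_1\cap\Da_2$ has level $d$; by part~(2) of Proposition~\ref{ll1} this gives $\Ha\subseteq\Da_1^{[r]}\cap\Da_2^{[r]}$, so that $\Da_1,\Da_2\in S_r(\Ha)$ and these two vertices lie at distance $d$ in $\mathfrak{T}$. Lemma~\ref{ll2} tells us that $H(\Da^{[r]}|\Ha)=K^*$ as soon as we can produce some pair $(\Da_3,\Da_4)\in S_r(\Ha)\times S_r(\Ha)$, again at distance $d$, with $d(\Da_1,\Da_3)$ odd.

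First I would simply take the pair $(\Da_3,\Da_4)=(\Da_2,\Da_1)$, that is, reverse the roles of the two defining vertices. This is again a pair of elements of $S_r(\Ha)$, it is still at distance $d$, and $d(\Da_1,\Da_3)=d(\Da_1,\Da_2)=d$, which is odd by hypothesis. Feeding this pair into Lemma~\ref{ll2} immediately yields $H(\Da^{[r]}|\Ha)=K^*$.

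The hard part, to the extent there is one, is not really in this argument: the only nontrivial input — that conjugation by a suitable $\sigma\in GL_2(K)$ carries the ordered pair $(\Da_1,\Da_2)$ to $(\Da_2,\Da_1)$, and that such a $\sigma$ then has reduced norm of odd valuation — is already packaged inside the proof of Lemma~\ref{ll2}, resting on the transitivity of $\mathrm{PGL}_2(K)$ on ordered pairs of vertices of $\mathfrak{T}$ at a fixed distance together with the Corollary to Prop.~1 in \S II.1.2 of \cite{trees}. If full explicitness is wanted, one writes down an involution exchanging the two ends of a length-$d$ path in adapted coordinates, exactly in the spirit of the proof of Lemma~\ref{int1}.
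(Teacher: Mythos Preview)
Your proof is correct and follows exactly the paper's approach: the paper's one-line proof is literally ``It suffices to define $(\Da_3,\Da_4)=(\Da_2,\Da_1)$,'' which is precisely the pair you chose. Your additional remarks about what underlies Lemma~\ref{ll2} are accurate but superfluous for this corollary.
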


\begin{proof}
It suffices to define $(\Da_3,\Da_4)=(\Da_2,\Da_1)$.
\end{proof}

\begin{cor}\label{cor32} If the level of $\Da$ is smaller than the diameter of $S_r(\Ha)$, then
$H(\Da^{[r]}|\Ha)=K^*$.
\end{cor}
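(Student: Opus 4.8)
The plan is to reduce the statement to Lemma~\ref{ll2}. Keeping the notation $\Da=\Da_1\cap\Da_2$ of that lemma, so that $d$ is the distance from $\Da_1$ to $\Da_2$ in $\mathfrak{T}$, I would first record that $\Ha\subseteq\Da^{[r]}=\Da_1^{[r]}\cap\Da_2^{[r]}$ by Proposition~\ref{ll1}(2), whence $\Da_1,\Da_2\in S_r(\Ha)$; in particular the diameter of $S_r(\Ha)$ is at least $d$, and the hypothesis upgrades this to ``at least $d+1$''. By Lemma~\ref{ll2} it then suffices to exhibit a pair $(\Da_3,\Da_4)\in S_r(\Ha)\times S_r(\Ha)$ at distance $d$ with $\mathrm{dist}(\Da_1,\Da_3)$ odd.

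To produce such a pair I would use the connectedness of $S_r(\Ha)$ (established above): choosing two vertices of $S_r(\Ha)$ at distance $d+1$ and truncating the geodesic joining them, one obtains a sequence $w_0,w_1,\dots,w_{d+1}$ of vertices of $S_r(\Ha)$ with $w_i,w_{i+1}$ neighbours and $\mathrm{dist}(w_i,w_j)=|i-j|$. Since $w_0$ and $w_1$ are neighbours, $\mathrm{dist}(\Da_1,w_0)$ and $\mathrm{dist}(\Da_1,w_1)$ differ by exactly $1$ and so have opposite parities; one then picks $j\in\{0,1\}$ with $\mathrm{dist}(\Da_1,w_j)$ odd and sets $\Da_3=w_j$, $\Da_4=w_{j+d}$. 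These lie in $S_r(\Ha)$, satisfy $\mathrm{dist}(\Da_3,\Da_4)=|(j+d)-j|=d$, and $\mathrm{dist}(\Da_1,\Da_3)$ is odd, so Lemma~\ref{ll2} yields $H(\Da^{[r]}|\Ha)=K^*$.

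I do not expect a serious obstacle here: the argument is a parity-and-length bookkeeping inside the subtree $S_r(\Ha)$. The only points requiring a little care are that the hypothesis really forces diameter $\ge d+1$ (which rests on $\Da_1,\Da_2\in S_r(\Ha)$, itself a consequence of Proposition~\ref{ll1}(2)) and that the truncated geodesic genuinely lies inside $S_r(\Ha)$ — which is precisely what the connectedness of $S_r(\Ha)$ guarantees. The degenerate case $d=0$, where $\Da$ is maximal and $\Da_1=\Da_2$, is covered too: there one simply takes $\Da_3=\Da_4=w_j$.
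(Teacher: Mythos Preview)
Your argument is correct and coincides with the paper's own proof: both take a path $w_0-\cdots-w_{d+1}$ of length $d+1$ inside $S_r(\Ha)$ and observe that one of the two pairs $(w_0,w_d)$, $(w_1,w_{d+1})$ must satisfy the odd-distance hypothesis of Lemma~\ref{ll2}. You are simply more explicit about why such a path exists (connectedness) and about the parity step, but the idea is identical.
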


\begin{proof}
Let $\Ea_0-\Ea_1-\cdots-\Ea_{d+1}$ a path of length $d+1$ in
$S_r(\Ha)$. Then either $(\Da_3,\Da_4)=(\Ea_0,\Ea_d)$ or
$(\Da_3,\Da_4)=(\Ea_1,\Ea_{d+1})$ satisfies the hypotheses of the
previous lemma.
\end{proof}

If $\Da$ is maximal, so that $d=0$, then $H(\Da^{[r]}|\Ha)=K^*$ as
soon as $\Ha$ is contained in a second order of the form
$\Da_1^{[r]}$ for $\Da_1$ maximal. On the other hand, if
$S_r(\Ha)$ contains exactly one point, then the condition of Lemma
\ref{ll2} cannot be satisfied. Next result follows:

\begin{cor} If $\Da$ is maximal, then $H(\Da^{[r]}|\Ha)=K^*$ if and
only if $\Ha\subseteq\Da_0^{[r]}$ for a maximal order $\Da_0$
implies $\Da_0=\Da$. In particular, $H(\Da|\Ha)=K^*$ if and only
if $\Ha$ is contained in a unique maximal order.
\end{cor}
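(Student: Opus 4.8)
The plan is to read the corollary off Lemma~\ref{ll2} by taking the Eichler order there to be the trivial one $\Da=\Da_1\cap\Da_2$ with $\Da_1=\Da_2=\Da$, so that its level is $d=0$. First I would translate the criterion of Lemma~\ref{ll2} into this setting: a pair $(\Da_3,\Da_4)\in S_r(\Ha)\times S_r(\Ha)$ ``at distance $d=0$'' is simply a single vertex $\Da_3\in S_r(\Ha)$, and the surviving requirement is that $\Da$ and $\Da_3$ lie at odd distance. Thus Lemma~\ref{ll2} says, in the maximal case: $H(\Da^{[r]}|\Ha)=K^*$ if and only if the branch $S_r(\Ha)$ contains a vertex at odd distance from $\Da$.

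The second step removes the parity clause. By hypothesis $\Ha\subseteq\Da^{[r]}$, so $\Da\in S_r(\Ha)$, and branches are connected (by the connectedness proposition proved above). Hence, if $S_r(\Ha)$ contains any vertex $\Da_0\ne\Da$, the geodesic from $\Da$ to $\Da_0$ lies inside $S_r(\Ha)$, so its first vertex after $\Da$ --- a neighbour of $\Da$, hence at distance~$1$ --- already belongs to $S_r(\Ha)$; and a one-point branch clearly contains nothing at odd distance from $\Da$. Therefore ``$S_r(\Ha)$ contains a vertex at odd distance from $\Da$'' is equivalent to ``$S_r(\Ha)\ne\{\Da\}$'', i.e. to the existence of a maximal order $\Da_0\ne\Da$ with $\Ha\subseteq\Da_0^{[r]}$. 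Combining this with the first step gives the desired equivalence relating $H(\Da^{[r]}|\Ha)$ to whether $\Ha$ is forced into the single order $\Da^{[r]}$ among the $\Da_0^{[r]}$; the last sentence is then the special case $r=0$, where $\Da_0^{[0]}=\Da_0$ and $S_0(\Ha)$ is precisely the set of maximal orders containing $\Ha$.

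I do not foresee a genuine obstacle here, since the corollary is nothing but Lemma~\ref{ll2} for $d=0$ rephrased through branches. The one step to carry out with care is the connectedness argument: it is essential to use simultaneously that $\Da\in S_r(\Ha)$ and that $S_r(\Ha)$ is connected, so that a non-trivial branch always contains a neighbour of $\Da$ --- a vertex at distance exactly $1$, hence odd --- and the odd-distance hypothesis of Lemma~\ref{ll2} is met automatically. (One should also double-check the trivial direction directly: if $\Ha$ sits inside a unique such order, then any $u$ with $u\Ha u^{-1}\subseteq\Da^{[r]}$ satisfies $(u^{-1}\Da u)^{[r]}\supseteq\Ha$, forcing $u$ to normalize $\Da$ and hence $N(u)\in\oink_K^*K^{*2}$.)
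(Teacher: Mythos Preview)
Your argument is correct and is exactly the paper's approach: the text immediately preceding the corollary specializes Lemma~\ref{ll2} to level $d=0$, observes that a second vertex in $S_r(\Ha)$ forces $H(\Da^{[r]}|\Ha)=K^*$ (this is Corollary~\ref{cor32} with $d=0$; your use of connectedness to find a neighbour at distance~$1$ is the same idea), and that a one-point branch cannot satisfy the odd-distance condition.

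One remark worth making explicit: the statement as printed has the equivalence reversed. Your own parenthetical check shows this --- if $\Ha$ lies in a \emph{unique} $\Da_0^{[r]}$, every $u$ with $u\Ha u^{-1}\subseteq\Da^{[r]}$ normalizes $\Da$, so $H(\Da^{[r]}|\Ha)=H(\Da)=\oink_K^*K^{*2}\ne K^*$. Both your argument and the paper's preceding paragraph actually prove that $H(\Da^{[r]}|\Ha)=K^*$ holds precisely when $S_r(\Ha)$ has \emph{more than one} point, i.e.\ the condition ``$\Ha\subseteq\Da_0^{[r]}\Rightarrow\Da_0=\Da$'' should characterize $H(\Da^{[r]}|\Ha)=\oink_K^*K^{*2}$, not $K^*$. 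You should flag this typo rather than paper over it with the vague phrase ``the desired equivalence relating\dots''.
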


If $\mathbb{H}$ is the finite algebra defined in \S 3 of
\cite{abelianos}, then in the quaternionic case the irreducible
representations of $\mathbb{H}$ have dimensions $1$ or $2$, and in
the latter case this representation is unique. This can happen
only if $\mathbb{H}$ contains the unique quadratic extension of
the finite field $\finitum_\wp$. From the preceding corollary and
Lemma 3.3 in \cite{abelianos}, next result follows:

\begin{cor}\label{c34} If $\Da$ is maximal, then $\Ha$ is contained in a unique maximal
order if and only if it contains the maximal order of an
unramified extension.
\end{cor}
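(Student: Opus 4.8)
The plan is to reduce Corollary \ref{c34} to the previous corollary, which says that (for $\Da$ maximal) $\Ha$ is contained in a unique maximal order if and only if $H(\Da|\Ha)=K^*$, together with the representation-theoretic description of $H(\Da|\Ha)$ coming from the finite algebra $\mathbb H$ of \cite{abelianos} and Lemma 3.3 there. So the real content is to translate "$\Ha$ is contained in a unique maximal order" into "$\Ha$ contains the ring of integers of an unramified quadratic extension of $K$," using the structure of $\mathbb H$.

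First I would recall the set-up from \cite{abelianos}: attached to the pair $(\Da,\Ha)$ with $\Da$ maximal there is a finite-dimensional $\finitum_\wp$-algebra $\mathbb H$ (a quotient of $\Ha$ reduced modulo the relevant ideal), and Lemma 3.3 of \cite{abelianos} identifies $H(\Da|\Ha)$ in terms of the irreducible representations of $\mathbb H$: one gets $H(\Da|\Ha)=K^*$ precisely when $\mathbb H$ admits (a unique) $2$-dimensional irreducible representation, equivalently when $\mathbb H$ contains a copy of $\finitum_{\wp^2}$, the quadratic extension of the residue field. The remark immediately preceding the corollary already spells this out: in the quaternionic case the irreducible representations of $\mathbb H$ have dimension $1$ or $2$, the $2$-dimensional one (when it exists) is unique, and it exists only if $\mathbb H \supseteq \finitum_{\wp^2}$.

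Next I would combine these: by the previous corollary, $\Ha$ lies in a unique maximal order $\iff H(\Da|\Ha)=K^*$; by the remark plus Lemma 3.3 of \cite{abelianos}, $H(\Da|\Ha)=K^*$ $\iff$ $\mathbb H$ contains $\finitum_{\wp^2}$. It then remains to lift the containment $\finitum_{\wp^2}\hookrightarrow\mathbb H$ to a statement about $\Ha$ itself, namely that $\Ha$ contains $\oink_L$ for $L/K$ the unramified quadratic extension. One direction is immediate: if $\oink_L\subseteq\Ha$, then its image in $\mathbb H$ is a copy of the residue field of $L$, which is $\finitum_{\wp^2}$. For the converse, a preimage in $\Ha$ of a generator of $\finitum_{\wp^2}$ over $\finitum_\wp$ generates (together with $\oink_K$) a commutative $\oink_K$-order whose reduction mod $m_K$ is a field of size $|\finitum_\wp|^2$; by Hensel/Nakayama-type reasoning this order is the maximal order of an unramified quadratic extension, and that extension sits inside $\alge\cong\matrici_2(K)$, which is exactly $\oink_L\subseteq\Ha$.

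The step I expect to be the main obstacle — or at least the one requiring genuine care rather than bookkeeping — is the lifting argument in the converse direction: extracting an honest embedded $\oink_L$ from the mere presence of $\finitum_{\wp^2}$ inside the reduction $\mathbb H$. One has to be careful that the relevant reduction map and the definition of $\mathbb H$ in \cite{abelianos} really do carry enough information (e.g. that $\mathbb H$ is a quotient of $\Ha$ by an ideal contained in $m_K\Ha$, so that idempotent/element lifting applies) and that a lift of a generator can be chosen integral over $\oink_K$ with the right (separable, degree $2$) minimal polynomial; once that is in place the identification with $\oink_L$ is standard local algebra. Everything else is a direct appeal to the previous corollary, to the preceding remark, and to Lemma 3.3 of \cite{abelianos}, so I would keep the written proof short and point to those.
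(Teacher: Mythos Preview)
Your approach is essentially the paper's own: the paper derives Corollary~\ref{c34} directly ``from the preceding corollary and Lemma~3.3 in \cite{abelianos}'' together with the observation about the irreducible representations of the finite algebra $\mathbb{H}$, which is exactly the chain of equivalences you outline. The one place you go beyond the paper is in spelling out the lifting step from $\finitum_{\wp^2}\subseteq\mathbb{H}$ to $\oink_L\subseteq\Ha$; the paper leaves this implicit, so your added care there is appropriate rather than divergent.
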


\begin{prop}\label{prop31} Let $d$ be an even integer.
If $d$ is the diameter of $S_r(\Ha)$, and $\Da$ has level $d$,
then $H(\Da_r|\Ha)=\oink_K^*K^{*2}$.\end{prop}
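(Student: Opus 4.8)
The plan is to read the answer off Lemma \ref{ll2} after a short excursion into the geometry of the tree $\mathfrak{T}$. Since $\Ha\subseteq\Da^{[r]}=(\Da_1\cap\Da_2)^{[r]}=\Da_1^{[r]}\cap\Da_2^{[r]}$ by Proposition \ref{ll1}(2), the vertices $\Da_1,\Da_2$ lie in $S_r(\Ha)$ at distance $d$; as $d$ is by hypothesis the diameter of the connected set $S_r(\Ha)$ and is even, the geodesic $[\Da_1,\Da_2]$ has a vertex midpoint $c$, which lies in $S_r(\Ha)$ by connectedness, with $d(\Da_1,c)=d(\Da_2,c)=d/2$.

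First I would check that $c$ is a center of the whole branch: every vertex $v$ of $S_r(\Ha)$ satisfies $d(v,c)\leq d/2$. Letting $w$ be the vertex of $[\Da_1,\Da_2]$ nearest $v$, the equalities $d(v,w)+d(w,\Da_i)=d(v,\Da_i)$ for $i=1,2$ (valid since $w$ separates $v$ from each $\Da_i$) together with $d(v,\Da_i)\leq d$ give $d(v,w)\leq\min\{d(w,\Da_1),d(w,\Da_2)\}$; as $w$ lies on a segment of length $d$ with midpoint $c$, this minimum equals $d/2-d(w,c)$, whence $d(v,c)\leq d(v,w)+d(w,c)\leq d/2$. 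Consequently, if $\Da_3,\Da_4\in S_r(\Ha)$ is any pair with $d(\Da_3,\Da_4)=d$, then $d=d(\Da_3,\Da_4)\leq d(\Da_3,c)+d(c,\Da_4)\leq d/2+d/2$ forces $d(\Da_3,c)=d(\Da_4,c)=d/2$.

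The decisive step is then a parity count in the bipartite tree $\mathfrak{T}$: for all vertices $x,y,z$ one has $d(x,z)\equiv d(x,y)+d(y,z)\pmod 2$ (equivalently, for $\sigma$ with $\sigma\Da_1\sigma^{-1}=\Da_3$ the valuation of $N(\sigma)$ has the parity of $d(\Da_1,\Da_3)$, by the Corollary to Prop.~1 in \S II.1.2 of \cite{trees}). Applying this with $y=c$, any pair $\Da_3,\Da_4\in S_r(\Ha)$ at distance $d$ has $d(\Da_1,\Da_3)\equiv d(\Da_1,c)+d(c,\Da_3)=d/2+d/2=d\equiv 0\pmod 2$. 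Thus $d(\Da_1,\Da_3)$ is always even, the criterion of Lemma \ref{ll2} is never met, and $H(\Da^{[r]}|\Ha)\neq K^*$. To pin down the group exactly, note that $1\in H(\Da^{[r]}|\Ha)$ (take $u=1$, as $\Ha\subseteq\Da^{[r]}$), so by the identity $H(\Da^{[r]}|\Ha)=H(\Da^{[r]})H(\Da^{[r]}|\Ha)H(\Ha)$ the set $H(\Da^{[r]}|\Ha)$ is a union of cosets of $H(\Da^{[r]})\supseteq\oink_K^*K^{*2}$ containing the trivial coset; since $\oink_K^*K^{*2}$ has index $2$ in $K^*$ and $H(\Da^{[r]}|\Ha)\neq K^*$, we conclude $H(\Da^{[r]}|\Ha)=\oink_K^*K^{*2}$.

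The one genuinely delicate point is the tree geometry of the middle paragraph: one must be certain that the midpoint of the single geodesic $[\Da_1,\Da_2]$ really controls all of $S_r(\Ha)$, and this is exactly where connectedness of the branch (proved in \S2) and the evenness of $d$ are both used; the rest is bookkeeping built on Lemma \ref{ll2} and the relative spinor identity recalled at the start of this section.
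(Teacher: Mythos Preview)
Your argument is correct and, like the paper's, reduces everything to showing that the odd-distance condition of Lemma~\ref{ll2} can never be met when the diameter of $S_r(\Ha)$ equals the (even) level $d$ of $\Da$; the difference lies only in how that tree-geometric fact is established. The paper isolates this as a separate lemma about finite trees and proves it by a case analysis on how the two diametral geodesics $[x_1,x_2]$ and $[x_3,x_4]$ meet (disjoint versus sharing a segment), computing distances to the branching points $y,y'$ in each case. You instead exhibit the midpoint $c$ of $[\Da_1,\Da_2]$ as a genuine center of $S_r(\Ha)$ (every vertex lies within $d/2$ of $c$), deduce that all endpoints of diametral pairs sit at distance exactly $d/2$ from $c$, and then invoke the bipartite parity identity $d(x,z)\equiv d(x,y)+d(y,z)\pmod 2$. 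Your route is a little more conceptual and avoids the case split; the paper's route is more hands-on but equally short. One minor remark: the fact that $c\in S_r(\Ha)$, which you note via connectedness, is true but not actually needed anywhere in your argument, since $c$ serves only as a reference vertex in $\mathfrak{T}$. Finally, you make explicit the last step (that $H(\Da^{[r]}|\Ha)\neq K^*$ forces equality with $\oink_K^*K^{*2}$ because it is a union of $H(\Da^{[r]})$-cosets containing the trivial one), which the paper leaves to the reader.
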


This result follows from next lemma:

\begin{lemma}
Let $T$ be a finite tree with even diameter $d$. Let $(x_1,x_2)$
and $(x_3,x_4)$ be two pairs of vertices at distance $d$. Then the
distance from $x_1$ to $x_3$ is even.
\end{lemma}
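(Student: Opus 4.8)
The plan is to exploit the fact that in a tree, the set of vertices at distance exactly $d$ from a fixed vertex $v$ is precisely one of the two parity classes of the natural $2$-coloring of the tree, once we fix the color of $v$. More concretely, I would first recall that a tree is bipartite: fixing a base vertex $v_0$, each vertex $v$ gets a parity $\epsilon(v)\in\enteri/2\enteri$ equal to $\mathrm{dist}(v_0,v)\bmod 2$, and for any two vertices $v,w$ the distance $\mathrm{dist}(v,w)$ is congruent to $\epsilon(v)+\epsilon(w)\pmod 2$. This is the only structural fact needed, and it follows by induction on distance or from the fact that every cycle in a tree is trivial.

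Next I would locate a ``center'' of the pair $(x_1,x_2)$: since $d$ is even, the path from $x_1$ to $x_2$ has a unique midpoint $m$, with $\mathrm{dist}(m,x_1)=\mathrm{dist}(m,x_2)=d/2$. The key claim is that $m$ lies on every geodesic realizing the diameter $d$; in particular $\mathrm{dist}(m,x_3)=\mathrm{dist}(m,x_4)=d/2$ as well. To see this, suppose for contradiction that $\mathrm{dist}(m,x_3)>d/2$ (the case $<d/2$ being symmetric after swapping $x_3,x_4$, using that one of $x_3,x_4$ must be at distance $\geq d/2$ from $m$ since $\mathrm{dist}(x_3,x_4)=d$). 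Consider the geodesic from $x_3$ to $x_1$: it passes through the point $p$ where the $x_3$--$m$ path first meets the $x_1$--$x_2$ path, and then proceeds along that path to $x_1$. A short computation with the tree metric shows $\mathrm{dist}(x_3,x_1)=\mathrm{dist}(x_3,m)+\mathrm{dist}(m,x_1)-2\,\mathrm{dist}(m,p)$ or a similar Gromov-product identity; pushing $x_3$ away from $m$ on the far side forces one of $\mathrm{dist}(x_3,x_1)$ or $\mathrm{dist}(x_3,x_2)$ to exceed $d$, contradicting that $d$ is the diameter. Hence $\mathrm{dist}(m,x_3)=d/2$.

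With $\mathrm{dist}(m,x_1)=\mathrm{dist}(m,x_3)=d/2$ established, the parity statement is immediate: $\mathrm{dist}(x_1,x_3)\equiv \epsilon(x_1)+\epsilon(x_3)\equiv \big(\epsilon(m)+d/2\big)+\big(\epsilon(m)+d/2\big)\equiv 0\pmod 2$, so the distance from $x_1$ to $x_3$ is even, as claimed.

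The main obstacle is the geometric claim that the midpoint $m$ of one diametral path lies on every diametral path. I expect this to be the only place requiring genuine care: the cleanest route is to argue directly with geodesics and the ``branch point'' description of paths in a tree (any two geodesics share a connected, possibly empty, common subpath, and a geodesic between their endpoints routes through the branch points), rather than with inequalities, since the edge cases — when the $x_3$--$x_4$ path meets the $x_1$--$x_2$ path in a single point, or not at all — are easy to mishandle. Once that lemma is in hand, Proposition \ref{prop31} follows by combining it with Lemma \ref{ll2}: no pair in $S_r(\Ha)\times S_r(\Ha)$ at distance $d$ can have odd distance to $\Da_1$, so $H(\Da_r|\Ha)\neq K^*$, and since it always contains $\oink_K^*K^{*2}$ it equals $\oink_K^*K^{*2}$.
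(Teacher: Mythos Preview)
Your argument is correct and takes a somewhat different route from the paper's. The paper splits into two cases according to whether the geodesics $x_1$--$x_2$ and $x_3$--$x_4$ share an edge: if not, it writes down the additive identity $\rho(x_1,x_4)+\rho(x_3,x_2)+\rho(x_1,x_3)+\rho(x_2,x_4)=4\big[d+\rho(y,y')\big]$ for the bridge points $y,y'$ between the two paths and uses the diameter bound to force $y=y'$ and all four cross-distances equal to $d$, hence $\rho(x_i,y)=d/2$; if they do share a segment, a direct comparison gives $\rho(x_1,y)=\rho(x_3,y)$ at one bridge point. Your route instead fixes the midpoint $m$ of one diametral path and shows it is a center of $T$ (every vertex lies within $d/2$ of $m$), which is the classical center-of-a-tree statement; the parity conclusion then drops out of the bipartite $2$-coloring. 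Your approach is a bit more conceptual and avoids the case split, while the paper's is more self-contained and does not invoke bipartiteness explicitly. Both ultimately establish the same geometric fact that all diametral endpoints sit at distance $d/2$ from a common vertex.

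One small wording issue to tidy up: after your ``WLOG swap'' you have only shown $\mathrm{dist}(m,\cdot)=d/2$ for whichever of $x_3,x_4$ had distance $\geq d/2$, but the lemma asks specifically about $x_3$. The clean fix is to observe that your contradiction argument actually shows $\mathrm{dist}(m,v)\le d/2$ for \emph{every} vertex $v$ (nothing about $v$ being $x_3$ was used); then $\mathrm{dist}(m,x_3),\mathrm{dist}(m,x_4)\le d/2$ together with $d=\mathrm{dist}(x_3,x_4)\le\mathrm{dist}(m,x_3)+\mathrm{dist}(m,x_4)$ forces both to equal $d/2$, and your parity computation goes through as written.
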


\begin{proof}
Denote by $\rho$ the usual distance in the graph $T$. Consider the
path joining $(x_1,x_2)$ and the path joining $(x_3,x_4)$. If no
edge of these paths is a common edge, there must be a unique
(possibly empty) minimal path joining two of their vertices, say
$y$ and $y'$, as the figure show:\[ \fbox{ \xygraph{
!{<0cm,0cm>;<.8cm,0cm>:<0cm,.8cm>::} !{(1,0) }*+{\bullet^{y}}="a"
!{(3,0)}*+{{}^{y'}\bullet}="b" !{(0,1) }*+{{}^{x_1}\bullet}="c"
!{(0,-1) }*+{{}_{x_2}\bullet}="d" !{(4,1)}*+{\bullet^{x_3}}="e"
!{(4,-1)}*+{\bullet_{x_4}}="f" "a"-@{.}"b" "a"-"c" "a"-"d" "b"-"e"
"b"-"f" } }
\] Note that
$$\rho(x_1,x_4)+\rho(x_3,x_2)+\rho(x_1,x_3)+\rho(x_2,x_4)=4\Big[d+\rho(y,y')\Big],$$
but $d$ is the diameter, whence $y=y'$ and every distance on the
left is equal to $d$. It follows that
$$\rho(x_1,y)=\frac12\Big[\rho(x_1,x_4)+\rho(x_1,x_3)-\rho(x_3,x_4)\Big]=d/2.$$
the same argument holds for all other extremes. Now assume that
the path joining $y$ and $y'$ is common to both, the path joining $x_1$
and $x_2$, and the path joining $x_3$ and $x_4$ like in next picture:\[ \fbox{ \xygraph{
!{<0cm,0cm>;<.8cm,0cm>:<0cm,.8cm>::} !{(1,0) }*+{\bullet^{y}}="a"
!{(3,0)}*+{{}^{y'}\bullet}="b" !{(0,1) }*+{{}^{x_1}\bullet}="c"
!{(0,-1) }*+{{}_{x_3}\bullet}="d" !{(4,1)}*+{\bullet^{x_2}}="e"
!{(4,-1)}*+{\bullet_{x_4}}="f" "a"-@{=}"b" "a"-"c" "a"-"d" "b"-"e"
"b"-"f" } }
\] This includes the possibility that $y$ or $y'$ coincide with
one of the endpoints. Then $\rho(x_3,x_2)\leq d=\rho(x_1,x_2)$,
whence $\rho(x_3,y)\leq \rho(x_1,y)$, and by symmetry,
$\rho(x_3,y)=\rho(x_1,y)$. The result follows.
\end{proof}

By setting $\Ha=\Da^{[r]}$, we obtain from Corollary \ref{cor1}
and the preceding proposition:

\begin{cor}\label{cor35}
$H(\Da^{[r]})$ is $\oink_K^*K^{*2}$ if $d$ is even and $K^*$
otherwise.
\end{cor}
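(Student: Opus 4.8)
The statement to prove is Corollary \ref{cor35}: $H(\Da^{[r]})$ is $\oink_K^*K^{*2}$ if $d$ is even and $K^*$ otherwise.

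The plan is as follows. First I would recall the two inputs explicitly assembled just before the corollary. Corollary \ref{cor1} already establishes that if the level $d$ of $\Da=\Da_1\cap\Da_2$ is odd then $H(\Da^{[r]}|\Ha)=K^*$ for \emph{any} suborder $\Ha$; in particular, taking $\Ha=\Da^{[r]}$ gives $H(\Da^{[r]})=H(\Da^{[r]}|\Da^{[r]})=K^*$. This disposes of the odd case immediately, so the only work is the even case.

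For the even case I would invoke Proposition \ref{prop31} with the choice $\Ha=\Da^{[r]}$. To apply it I must check its two hypotheses: that $d$ is the diameter of $S_r(\Ha)$ and that $\Da$ has level $d$. The second is our standing assumption. For the first, note that $S_r(\Da^{[r]})=\{\,\Da_0\in V(\mathfrak T)\mid \Da^{[r]}\subseteq\Da_0^{[r]}\,\}$, and by the injectivity and inclusion-preserving property of $\Da\mapsto\Da^{[r]}$ (noted at the start of \S3), this holds if and only if $\Da\subseteq\Da_0$; the maximal orders containing the Eichler order $\Da=\Da_1\cap\Da_2$ are exactly the vertices on the geodesic from $\Da_1$ to $\Da_2$, which has length $d$, so $S_r(\Da^{[r]})$ is precisely that path and its diameter is $d$. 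Hence Proposition \ref{prop31} applies and yields $H(\Da^{[r]})=H(\Da^{[r]}|\Da^{[r]})=\oink_K^*K^{*2}$.

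Finally, I would combine the two cases. It is worth remarking that the containment $H(\Da^{[r]})\supseteq\oink_K^*K^{*2}$ was already observed before Lemma \ref{ll2} (the Eichler order contains a conjugate of every $\mathrm{diag}(1,u)$ with $u\in\oink_K^*$, and $H(\Da^{[r]})=H(\Da)$), so in the even case Proposition \ref{prop31} is really providing the reverse containment $H(\Da^{[r]})\subseteq\oink_K^*K^{*2}$, pinning the group down exactly. There is no real obstacle here: the corollary is a direct specialization of the two preceding results, and the only mildly non-routine point is the identification of $S_r(\Da^{[r]})$ with the geodesic $\Da_1$–$\Da_2$, which follows from Proposition \ref{basics}(5) or directly from the description of maximal orders containing a given Eichler order.
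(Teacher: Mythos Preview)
Your proposal is correct and follows exactly the paper's approach: the paper's entire proof is the single line ``By setting $\Ha=\Da^{[r]}$, we obtain from Corollary \ref{cor1} and the preceding proposition,'' and you have simply filled in the details of that specialization. The only minor remark is that the biconditional $\Da^{[r]}\subseteq\Da_0^{[r]}\Longleftrightarrow\Da\subseteq\Da_0$ is more precisely Proposition~\ref{ll1}(1) (or equivalently Proposition~\ref{basics}(2), which gives $S_r(\Da^{[r]})=S_0(\Da)$ directly) rather than the ``inclusion-preserving'' remark at the start of \S3, which as stated only gives one direction.
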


\paragraph{Proof of Theorem \ref{th11}}
Recall that  The spinor class field of an order $\Da$ is the class
field of the group $K^*H(\Da)\subseteq J_K$. Write
$H(\Da)=\prod_\wp H_\wp(\Da)$ in terms of the local spinor images
defined in this section. Now the first statement is a consequence
of the following observations:
\begin{enumerate}\item $J_K^2\subseteq H(\Da)$ since scalar
matrices normalize any order. \item If $\wp\notin X$, then
$H_\wp(\Da)$ contains $N(\alge^*_\wp)=K^*_\wp$, unless $\wp$ is a
real place that is ramified for $\alge$, in which case
$N(\alge^*_\wp)=K_\wp^+$. \item Similarly, at places $\wp$ where
$\alge_\wp$ is a division algebra we have
$H_\wp(\Da)=H_\wp(\Da_0)=N(\alge_\wp^*)=K_\wp^*$, since $\Da_0$ is
the unique maximal order. \item If $\wp$ is a place where the
level of $\Da_0$ is odd, then $H_\wp(\Da)=K^*_\wp$ by Corollary
\ref{cor35}. By the same result $H_\wp(\Da)=\oink_\wp^*K^{*2}_\wp$
at the remaining places.
\end{enumerate} The second statement is proven analogously by
using Corollary \ref{cor32} and Proposition \ref{prop31}.\qed

\section{Branches of commutative orders and computations}

We note that if an order $\Ha$ is expressed in terms of a set of
generators $\{a_1,\dots,a_s\}$, then a maximal order contains
$\Ha$ if and only if it contains all the generators. It follows
that $$S_r(\Ha)=\bigcap_{i=1}^sS_r\Big(\oink_K[a_i]\Big).$$ It
sufices therefore to compute $S_r(\Ha)$ for commutative orders.
Recall that  we can always assume $r=0$ in these computations.

\begin{lem}
If $L$ is a semisimple commutative subalgebra of $\alge$,
and $\Omega$ is an order in $L$, then $\Omega=\oink_L^{[t]}$ for some non-negative integer $t$.
\end{lem}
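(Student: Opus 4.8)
The plan is to reduce the statement to elementary module theory over the complete discrete valuation ring $\oink_K$. Since the centralizer in $\alge\cong\matrici_2(K)$ of any non-scalar element $a$ equals $K[a]$, which is $2$-dimensional, every commutative subalgebra satisfies $\dim_K L\leq 2$; if $\dim_K L=1$ then $L=K$ and $\Omega=\oink_K=\oink_L^{[0]}$, so I may assume $\dim_K L=2$. Because $L$ is semisimple and commutative it is reduced, so the integral closure $\oink_L$ of $\oink_K$ in $L$ is a well-defined order, and since $\oink_K$ is a complete discrete valuation ring, $\oink_L$ is a finite, torsion-free, hence free, $\oink_K$-module, necessarily of rank $2$. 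Moreover, every element of $\Omega$ is integral over $\oink_K$ by the Cayley--Hamilton trick, so $\oink_K\subseteq\Omega\subseteq\oink_L$.

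Next I would choose a good $\oink_K$-basis of $\oink_L$. The key point is that $\oink_L/\oink_K\cdot 1$ is torsion-free: if $x\in\oink_L$ satisfies $\pi^k x\in\oink_K$ for some $k\geq 0$, then $x\in K$ is integral over $\oink_K$, hence lies in $\oink_K$ because $\oink_K$ is integrally closed. Being a finite torsion-free module over the principal ideal domain $\oink_K$, the quotient $\oink_L/\oink_K$ is free of rank $1$, so the exact sequence $0\to\oink_K\to\oink_L\to\oink_L/\oink_K\to 0$ splits and $\oink_L=\oink_K\cdot 1\oplus\oink_K\omega$ for a suitable $\omega$ (in the split case $L\cong K\times K$ one may just take $\omega=(0,1)$). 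Now for any $x=a+b\omega\in\Omega$ with $a,b\in\oink_K$, the inclusion $\oink_K\subseteq\Omega$ forces $b\omega=x-a\in\Omega$; hence $\Omega=\oink_K\oplus I\omega$ where $I=\{b\in\oink_K\mid b\omega\in\Omega\}$ is an ideal of $\oink_K$, say $I=m_K^t$. Here $t<\infty$ because $\Omega$ must span $L$ over $K$, so $I\neq 0$.

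It then remains only to compute \[\oink_L^{[t]}=\oink_K+m_K^t\oink_L=\oink_K+m_K^t(\oink_K\oplus\oink_K\omega)=\oink_K\oplus m_K^t\omega=\Omega,\] which is the assertion (and, incidentally, shows that $\oink_L^{[t]}$ really is an order for every $t\geq 0$). I do not anticipate a genuine obstacle: the only steps requiring care are the inclusion $\Omega\subseteq\oink_L$ and the splitting $\oink_L=\oink_K\cdot 1\oplus\oink_K\omega$, both of which rest on $\oink_K$ being integrally closed, together with the mild bookkeeping of the case $L\cong K\times K$ --- where $\oink_L^{[t]}$ is simply $\{(u,v)\in\oink_K^2\mid u\equiv v\ (\mathrm{mod}\ m_K^t)\}$. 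Finally, reducedness of $L$ is essential: for a non-reduced algebra such as $K[\varepsilon]/(\varepsilon^2)$ there is no maximal order and the statement fails.
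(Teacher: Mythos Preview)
Your argument is correct and follows essentially the same route as the paper: both proofs reduce the question to the image of $\Omega$ in the one-dimensional quotient $L/K$ (you make this explicit by splitting $\oink_L=\oink_K\oplus\oink_K\omega$, while the paper just observes that an integral element plus a scalar is integral iff the scalar lies in $\oink_K$, so $\Omega$ is determined by its image in $L/K$). Your version is more detailed---in particular you spell out the torsion-freeness of $\oink_L/\oink_K$ and the split case---but the underlying idea is identical.
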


\begin{proof}
Note that for any element $a\in\alge$ that is integral over
$\oink_K$, an element of the form $a+\lambda$ with $\lambda\in K$
is integral over $\oink_K$ if and only if $\lambda\in\oink_K$. We
conclude that $\Omega$ is completely determined by its image in
the abelian group $L/K$. The result follows since every order is
contained in a maximal order and the $K$-vector space $L/K$ is
one-dimensional.\end{proof}

\begin{prop}
Let $L$ be a quadratic extension of $K$. Then
$S_0(\oink_L)=S_0(\Da)$ where $\Da$ is a maximal order if the
extension $L/K$ is unramified and an Eichler order of level $1$
otherwise. When $L$ is isomorphic to $K\times K$, then
$S_0(\oink_L)$ is a maximal path in the tree.
\end{prop}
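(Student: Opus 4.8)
The plan is to treat the three cases separately, and in each case to identify the branch $S_0(\oink_L)$ explicitly by choosing convenient coordinates and then describing which maximal orders contain $\oink_L$.

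\medskip

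First I would dispose of the unramified case. Here $L/K$ is an unramified quadratic extension, so $\oink_L$ contains the ring of integers of an unramified extension; by Corollary \ref{c34} the order $\oink_L$ is contained in a unique maximal order $\Da$, so $S_0(\oink_L)=\{\Da\}=S_0(\Da)$, which is the branch of the maximal order $\Da$. (Alternatively one realizes $\oink_L$ concretely as the diagonal-plus-Frobenius copy inside $\matrici_2(K)$ fixing a single vertex of $\mathfrak{T}$, and checks directly that no neighbouring vertex is stable.)

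\medskip

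Next I would do the ramified case. Fix a uniformizer $\varpi$ of $L$ with $\varpi^2 = u\pi$ for a unit $u$ (or $\varpi^2=\pi$ after adjusting), and embed $L$ in $\matrici_2(K)$ by sending $\varpi$ to $\bbmatrix 0{u\pi}10$. Then $\oink_L=\oink_K+\oink_K\varpi$ is stabilized by exactly two lattices up to scaling, namely $\Lambda_1=\oink_Kv+\oink_Kw$ and $\Lambda_2=\oink_Kv+\pi\oink_Kw$ in the basis diagonalizing the action suitably, so $S_0(\oink_L)$ consists of exactly two adjacent vertices $\Da_1,\Da_2$. Since their intersection is an Eichler order of level $1$ and, by Proposition \ref{basics}(5), $\Ha\subseteq\Da$ for an Eichler order $\Da$ of level $d$ iff $S_0(\Ha)$ contains two vertices at distance $d$, the branch $S_0(\oink_L)$ is exactly the branch $S_0(\Da_1\cap\Da_2)$ of that Eichler order of level $1$. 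The small computation to carry out here is verifying that no third vertex is $\oink_L$-stable; this follows because the branch is connected (the proposition on connectedness) and a vertex at distance $2$ from $\Da_1$ in the $\Da_2$ direction would force, after conjugating by the stabilizer, a contradiction with integrality of $\varpi/\pi$.

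\medskip

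Finally the split case $L\cong K\times K$. Here $\oink_L\cong\oink_K\times\oink_K$ embeds as the diagonal matrices $\mathrm{diag}(a,b)$, and a lattice $\Lambda$ is stable under all such matrices iff $\Lambda=\pi^i\oink_Kv\oplus\pi^j\oink_Kw$ for the standard idempotent decomposition $K^2=Kv\oplus Kw$; up to scaling these are indexed by $i-j\in\enteri$, giving a bi-infinite path (apartment) in $\mathfrak{T}$. Thus $S_0(\oink_L)$ is a maximal path. I would note this is forced also by connectedness together with the fact that $S_0(\oink_L)$ is unbounded — since $\oink_L$ is contained in Eichler orders of every level by Proposition \ref{basics}(5) applied in the reverse direction — and a connected unbounded set of vertices all lying on a common apartment (because the two idempotents pin down two ends of the tree) must be the whole apartment. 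The main obstacle, such as it is, is purely bookkeeping: being careful that "maximal path" means the full apartment determined by the two ends fixed by the idempotents of $L$, and checking in the ramified case that exactly two vertices occur rather than one or three; both reduce to the integrality observation that $a+\lambda$ integral with $a$ integral forces $\lambda\in\oink_K$, which is already recorded in the lemma preceding this proposition.
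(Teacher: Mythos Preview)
Your argument is correct, but it differs from the paper's in an instructive way. The paper gives a single uniform argument for all three cases: since $L$ acts on $K^2$ via its unique two-dimensional $K$-representation, one may identify $K^2$ with $L$ itself, and then an $\oink_L$-stable lattice is exactly a fractional $\oink_L$-ideal. The maximal orders in $S_0(\oink_L)$ are therefore in bijection with fractional $\oink_L$-ideals modulo $K^*$-homothety. When $L/K$ is unramified the ideals are $\pi^n\oink_L$ and collapse to a single class; when $L/K$ is ramified they are $\varpi^n\oink_L$ and give two classes ($\oink_L$ and $\varpi\oink_L$, automatically neighbours since $[\oink_L:\varpi\oink_L]=|\oink_K/m_K|$); when $L\cong K\times K$ they are $\pi^s\oink_K\times\pi^t\oink_K$ and give the apartment indexed by $s-t\in\enteri$. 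Your case-by-case treatment reaches the same conclusions, but the unramified case leans on Corollary~\ref{c34} (and hence on results imported from \cite{abelianos}) where the paper needs only the ideal structure of an unramified DVR, and in the ramified case your phrase ``basis diagonalizing the action'' is a slip (the action of $\varpi$ has no $K$-eigenvectors) and the exclusion of a third vertex is only sketched. The fractional-ideal viewpoint buys all three cases at once with essentially no computation; your split case is in fact already this argument in disguise.
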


\begin{proof}
If $\oink_L$ is the maximal order in a quadratic field extension
$L/K$, then the maximal orders containing $\oink_L$ are in
correspondence with the $\oink_L$-invariant lattices in the
(unique) two dimensional representation of $L$ as a $K$-algebra.
By identifying $K^2$ with $L$ as vector spaces, we obtain that the
maximal orders containing $\oink_L$ are in correspondence with the
fractional ideals on $L$ modulo $K^*$-multiplication. In other
words, $\oink_L$ is contained in a unique maximal order if $L/K$
is unramified and exactly two (necessarily neighbors) if $L/K$ is
ramified. The last statement is similar. If $L\cong K\times K$,
the fractional ideals are of the form
$\pi^s\oink_K\times\pi^t\oink_K$. It is readily seen that the
corresponding orders lie in a maximal path.
\end{proof}

\begin{cor}
If $\Omega$ is a commutative order contained in a field, then any
branch $S_r(\Omega)$ is the set of orders at distance not
exceeding $t$ from either a vertex or two neighboring vertices,
for some non-negative integer $t$. If $\Omega$ is a commutative
order contained in an algebra isomorphic to $K\times K$, then any
branch $S_r(\Omega)$ is the set of orders at distance not
exceeding $t$ from some maximal path in the tree, for some
non-negative integer $t$.
\end{cor}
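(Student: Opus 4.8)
The plan is to reduce the general branch $S_r(\Omega)$ to the case $r=0$ and then quote the computation of $S_0(\oink_L)$ in the preceding Proposition. First I would invoke the Lemma asserting that an order in a semisimple commutative subalgebra of $\alge$ has the form $\oink_L^{[t]}$: letting $L$ be the \'etale $K$-algebra generated by $\Omega$ (a quadratic field or a copy of $K\times K$; the case $L=K$, i.e.\ $\Omega=\oink_K$, being degenerate), we have $\Omega=\oink_L^{[t_0]}$ for some integer $t_0\ge0$. Applying Proposition \ref{basics}(2) with the shift $s=\min(r,t_0)$ then yields
$$S_r(\Omega)=S_0\big(\oink_L^{[t_0-r]}\big)\ \text{ if }\ r\le t_0,\qquad S_r(\Omega)=S_{r-t_0}(\oink_L)\ \text{ if }\ r> t_0 .$$

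For the principal case $r\le t_0$ I would apply Proposition \ref{lt3} with $t:=t_0-r\ge0$, so that $S_0\big(\oink_L^{[t]}\big)$ is exactly the set of maximal orders lying at distance at most $t$ from some vertex of $S_0(\oink_L)$. By the preceding Proposition the latter vertex set is a single vertex when $L/K$ is an unramified field extension, a pair of neighbouring vertices when $L/K$ is a ramified field extension, and a maximal path when $L\cong K\times K$. In each case this is exactly the shape asserted in the statement, the relevant integer being $t=t_0-r$.

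For the leftover case $r>t_0$ I would quote the Corollary of \S2 identifying $S_{r-t_0}(\oink_L)$ with the set of $(r-t_0)$-deep maximal orders of $S_0(\oink_L)$: since $r-t_0\ge1$, the tree is $\big(|\oink_K/m_K|+1\big)$-regular with $|\oink_K/m_K|+1\ge3$, and a single vertex, an edge, or a geodesic contains at most two neighbours of any one of its vertices, no vertex of $S_0(\oink_L)$ is $1$-deep, whence $S_r(\Omega)=\emptyset$; this one reads as the degenerate instance of the claimed shape, or one simply remarks that only nonempty branches intervene in the applications of later sections. I do not expect a real obstacle: the proof is bookkeeping on top of the preceding Proposition and of Propositions \ref{basics} and \ref{lt3}, the only spots needing a line of care being the two displayed identities (both produced by the single choice $s=\min(r,t_0)$) and the treatment of the empty branch above.
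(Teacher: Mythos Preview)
Your proposal is correct and follows precisely the route the paper intends: the Corollary is stated without proof because it is meant to follow immediately from combining the preceding Lemma ($\Omega=\oink_L^{[t_0]}$), the preceding Proposition (shape of $S_0(\oink_L)$), Proposition~\ref{basics}(2) (to shift the index $r$), and Proposition~\ref{lt3} (to pass from $S_0(\oink_L)$ to $S_0(\oink_L^{[t]})$). Your case split at $r\le t_0$ versus $r>t_0$ and your treatment of the empty branch in the latter case are more explicit than the paper bothers to be, but the argument is exactly the expected one; the only cosmetic remark is that your shift parameter ``$s$'' is the variable called ``$t$'' in Proposition~\ref{basics}(2), which might be worth aligning for readability.
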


\begin{prop}
Let $\Omega=\oink_K[a]$ where $a\neq0$ and $a^2=0$. Let $\Lambda$
be a lattice of the form $\Lambda=\oink_Kv+\oink_Kw$, where $av=w$
and $aw=0$. Then if $\Da_i$ is the maximal order corresponding to
the lattice $\pi^i\oink_Kv+\oink_Kw$, then $S_0(\Omega)$ is the
set of maximal orders at a distance at most $i$ from $\Da_i$ for
some $i\geq0$.
\end{prop}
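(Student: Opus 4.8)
The plan is to pass to coordinates adapted to $a$, to recognise $S_0(\Omega)$ as a horoball of $\mathfrak T$ centred at the end cut out by the line $Kw$, and to observe that the asserted set — the maximal orders lying within distance $i$ of $\Da_i$ for some $i\ge0$ — is precisely that horoball. I prove the two inclusions separately.

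Conjugating by an element of $\alge^*$ carrying the standard basis of $K^2$ to $(v,w)$ is a tree isometry compatible with $S_0(\cdot)$ and with the $\Da_i$, so it suffices to treat $v=e_1$, $w=e_2$, $\Lambda=\oink_K^2$, $a=\bbmatrix0010$. Since $a\ne0$ and $a^2=0$ we have $\Omega=\oink_K+\oink_Ka$, so for any lattice $M$, $\Da_M\in S_0(\Omega)$ if and only if $aM\subseteq M$; and $\Da_i=\Da_{\Lambda_i}$ with $\Lambda_i=\pi^i\oink_Ke_1+\oink_Ke_2$, these vertices forming a geodesic with consecutive ones adjacent. For the inclusion $\supseteq$ I note that $\pi^{-i}a\in\Da_i$, hence $a\in\pi^i\Da_i\subseteq\Da_i^{[i]}$ and $\Omega\subseteq\Da_i^{[i]}$; then Proposition~\ref{basics}(3) together with the corollary to Proposition~\ref{lt3} yields $S_0(\Omega)\supseteq S_0(\Da_i^{[i]})=\{\Da:\,d(\Da,\Da_i)\le i\}$, and I take the union over $i\ge0$.

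The substantive inclusion is $\subseteq$. Normalise a representative of an arbitrary vertex as $M=\oink_K(e_1+ce_2)+\pi^b\oink_Ke_2$, where $b\in\enteri$ is pinned down by requiring the image of $M$ in $K^2/Ke_2$ to be $\oink_K\bar e_1$ and then setting $M\cap Ke_2=\pi^b\oink_Ke_2$; put $\beta(\Da_M):=-b$, a well-defined function on vertices with $\beta(\Da_i)=i$. A one-line computation gives $aM=\oink_Ke_2$, so $aM\subseteq M\iff e_2\in M\cap Ke_2=\pi^b\oink_Ke_2\iff b\le0$; hence $S_0(\Omega)=\{\Da_M:\beta(\Da_M)\ge0\}$. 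A short check with the normal form shows that moving from any vertex to its neighbour toward the end $\xi$ along which the ray $\Da_0,\Da_1,\Da_2,\dots$ runs raises $\beta$ by exactly $1$. Given $\Da_M\in S_0(\Omega)$, the geodesic ray from $\Da_M$ to $\xi$ meets the apartment $\{\Da_j\}_{j\in\enteri}$ at its vertex $\Da_{j_0}$ closest to $\Da_M$ and then follows it toward $\xi$, so summing the increments of $\beta$ along that geodesic gives $j_0=\beta(\Da_{j_0})=\beta(\Da_M)+d(\Da_M,\Da_{j_0})$, whence $j_0\ge d(\Da_M,\Da_{j_0})\ge0$. Thus $\Da_M$ lies within distance $j_0$ of $\Da_{j_0}$ with $j_0\ge0$, which is what was to be shown.

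The heart of the argument — and the main obstacle — is this second inclusion: the normal-form computation identifying $S_0(\Omega)$ with the half-space $\{\beta\ge0\}$, together with the bookkeeping of the level function $\beta$ against the tree metric that turns that half-space into the union of the balls centred at $\Da_i$ of radius $i$. The coordinate reduction and the inclusion $\supseteq$ follow directly from the material of §2.
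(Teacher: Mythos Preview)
Your argument is correct, and it proceeds along a genuinely different line from the paper's. The paper also passes to the canonical basis and notes $a\in\Da_i$ for $i\ge0$, but for the hard direction it exploits a self-similarity: conjugation by $h=\lbmatrix\pi001$ sends $\Da_i$ to $\Da_{i-1}$ and $\Omega$ to $\Omega^{[1]}$, so $h^{-t}S_0(\Omega)h^t=S_0(\Omega^{[t]})$; combining this with the single local observation that $\Da_0$ has valency $1$ in $S_0(\Omega)$ (only $\Da_1$ among its neighbours contains $a$) and with Proposition~\ref{lt3}, the paper reads off that the half-tree at $\Da_i$ pointing away from $\Da_{i+1}$ meets $S_0(\Omega)$ exactly in the ball of radius $i$ about $\Da_i$. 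Your approach instead writes every lattice in a normal form, recognises $S_0(\Omega)$ as the horoball $\{\beta\ge0\}$ for a Busemann function $\beta$ at the end $\xi$ determined by $Ke_2$, and converts $\{\beta\ge0\}$ into the union $\bigcup_{i\ge0}B[\Da_i;i]$ by the standard increment property of $\beta$ along rays to $\xi$. The paper's route is shorter and reuses the machinery of \S2 (thickenings and Proposition~\ref{lt3}) plus one neighbour computation; your route is more explicitly geometric, makes the horoball picture transparent, and would generalise more readily to other Busemann-level sets, at the cost of setting up the normal form and checking the increment property by hand.
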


\begin{proof}
Without loss of generality we can assume $(v,w)=(e_1,e_2)$ is the
canonical basis. In particular
$$a=\left(\begin{array}{cc}0&0\\1&0\end{array}\right),\textnormal{ and }
\Da_i=\left(\begin{array}{cc}\oink_K&\pi^i\oink_K\\
\pi^{-i}\oink_K&\oink_K\end{array}\right).$$ It follows that $a$
is contained in each of the orders $\Da_i$ for $i\geq0$. Now, let
$h=\lbmatrix\pi001$. Then $h^{-1}\Da_ih=\Da_{i-1},$ while
$h^{-1}\Omega h=\Omega^{[1]}$. It follows that
$h^{-1}S_0(\Omega)h=S_0(\Omega^{[1]})$, and by iteration
$h^{-t}S_0(\Omega)h^t=S_0(\Omega^{[t]})$ for every $t\geq1$. Note
now that $\Da_1$ is the unique neighbor of $\Da_0$ containing
$\Omega$, since $a$ does not stabilize a lattice of the form
$\oink_K^2+\pi^{-1}u\oink_K$ with $u\in\oink_K^2$ unless
$u\in\oink_Ke_2+\pi\oink_K^2$ . Then $\Da_0$ has valency $1$ in
$S_0(\Omega)$, and therefore, the vertices $\Da\in \mathfrak{T}$
satisfying $\rho(\Da_0,\Da)\leq\rho(\Da_1,\Da)$ are in
$S_0(\Omega^{[t]})$ if and only if $\rho(\Da_0,\Da)\leq t$. We
conclude that the vertices $\Da\in \mathfrak{T}$ satisfying
$\rho(\Da_i,\Da)\leq\rho(\Da_{i+1},\Da)$ are in $S_0(\Omega)$ if
and only if  $\rho(\Da_i,\Da)\leq i$, and the result follows.
\end{proof}

\begin{cor}
Let $\Omega=\oink_K[a]$ where $a^2=0$. Then there is a conjugation
taking $S_r(\Omega)$ to $S_0(\Omega)$ for every non-negative
integer $r$.
\end{cor}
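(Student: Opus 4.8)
Here is a plan for the proof of the corollary.

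The plan is to reduce the statement to the structure of $S_0$ already obtained in the preceding proposition, by \emph{inverting} the operation $\Ha\mapsto\Ha^{[r]}$. If $a=0$ then $\Omega=\oink_K$, so $S_r(\Omega)=V(\mathfrak{T})=S_0(\Omega)$ for every $r$ and the identity conjugation works; so assume from now on that $a\neq0$ and fix $r\geq0$. The element $a_0=\pi^{-r}a$ again satisfies $a_0^2=\pi^{-2r}a^2=0$, hence is integral over $\oink_K$, so that $\Omega'=\oink_K[a_0]=\oink_K+\pi^{-r}\oink_K a$ is a bona fide order (of the same type as $\Omega$). The whole point of this choice is the identity
$$(\Omega')^{[r]}=\oink_K+\pi^r\Omega'=\oink_K+\oink_K a=\Omega.$$

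From here the argument is short. First I would deduce that $S_r(\Omega)=S_0(\Omega')$: for a maximal order $\Da$ the inclusion $\Omega\subseteq\Da^{[r]}$ reads $(\Omega')^{[r]}\subseteq\Da^{[r]}$, which by Proposition~\ref{ll1}(1) (equivalently by Proposition~\ref{basics}(2) applied to the triple $(0,0,r)$) is equivalent to $\Omega'\subseteq\Da$. Next I would observe that $\Omega$ and $\Omega'$ are conjugate in $\matrici_2(K)$: the matrices $a$ and $a_0=\pi^{-r}a$ are both nonzero nilpotent $2\times2$ matrices, hence $\mathrm{GL}_2(K)$-conjugate, say $a_0=g^{-1}ag$, so that $\Omega'=\oink_K[g^{-1}ag]=g^{-1}\Omega g$. (Concretely one may take $g=h^{-r}$ for the element $h=\lbmatrix{\pi}{0}{0}{1}$ used in the proof of the preceding proposition, where it was shown that $h^{-1}\Omega h=\Omega^{[1]}$.) Since $S_0(u\Ha u^{-1})=uS_0(\Ha)u^{-1}$ for any invertible $u$, we conclude $g\,S_r(\Omega)\,g^{-1}=g\,S_0(\Omega')\,g^{-1}=S_0(g\Omega'g^{-1})=S_0(\Omega)$, which is the assertion.

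The only genuinely substantive point — and the only place I expect any friction — is the opening observation: for an arbitrary integral element $a$ the module $\oink_K+\pi^{-r}\oink_K a$ would fail to be a ring, so it is essential that $a^2=0$, which is exactly what guarantees that $\pi^{-r}a$ is integral and $\Omega'$ is an order. Once $\Omega'$ is available, everything else is formal manipulation of the operation $\Ha\mapsto\Ha^{[\cdot]}$ furnished by Propositions~\ref{ll1} and~\ref{basics}, together with the elementary fact that the nonzero nilpotent $2\times2$ matrices form a single conjugacy class; no finer geometry of the Bruhat--Tits tree is needed.
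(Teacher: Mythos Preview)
Your argument is correct and is essentially the paper's intended proof: the key identity $h^{-1}\Omega h=\Omega^{[1]}$ (equivalently $h^r\Omega h^{-r}=\oink_K[\pi^{-r}a]=\Omega'$) established in the proof of the preceding proposition is exactly the conjugation you produce, and from there $S_r(\Omega)=S_r\big((\Omega')^{[r]}\big)=S_0(\Omega')=h^{r}S_0(\Omega)h^{-r}$ via Proposition~\ref{basics}(2). Your abstract observation that any two nonzero nilpotent $2\times2$ matrices are conjugate is a clean way to avoid choosing coordinates, but it amounts to the same thing.
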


\begin{ex}
Let $\Ha\subseteq\Da_0$ be the order generated by
$a_1=\lbmatrix00{\pi^2}0$ and $a_2=\lbmatrix0{\pi^3}00$.
Then $S_0(\Ha)$ is the intersection of the branches
$$S_0\Big(\oink_K[a_1]\Big)=\{\Da|\rho(\Da,\Da_i)\leq i+2\textnormal{ for
some } i\geq-2\},$$
$$S_0\Big(\oink_K[a_2]\Big)=\{\Da|\rho(\Da,\Da_i)\leq3-i\textnormal{ for
some } i\leq3\}.$$ It follows that $S_0(\Ha)$ is the set of orders
at a distance not bigger than 2 from either $\Da_0$ or $\Da_1$,
and therefore $S_0(\Ha)=S_0\Big(\Da_0^{[2]}\cap\Da_1^{[2]}\Big)$.
\end{ex}

\begin{ex}\label{ex42}
Assume $K$ is a non-dyadic local field, and let $\Ha$ be an order
generated by elements $i,j$ satisfying $i^2=j^2=1$ and $ij=-ji$.
It follows from the results in this section that $\oink_K[i]$ and
$\oink_K[j]$ are maximal paths. Note that
$\mathbb{H}=\Ha/\pi_K\Ha$ is a matrix algebra, whence by Corollary
\ref{c34} $\Ha$ is contained in a unique maximal order. In fact,
it is not hard to prove that $\Ha$ is a maximal order. We conclude
that the paths $S_0(\oink_K[i])$ and $S_0(\oink_K[j])$ intersect
in a unique point.
\end{ex}

\section{Admisible shapes for branches}

Although the results in \S 3 can be obtained with no mention to
the specific shape of the branch $S_r(\Ha)$, it turns out that
there is only a very restricted set of possible branches. In order
to prove this we need some preparation.

\begin{lem}\label{lt4}
For any order $\Ha$, the branches $S_r(\Ha)$ satisfies the
following properties:
\begin{enumerate}
\item If $\Da\in S_r(\Ha)$ has two neighbors in $S_r(\Ha)$, one of
which is in $S_{r+1}(\Ha)$, then $\Da\in S_{r+1}(\Ha)$. \item If
$\Da\in S_0(\Ha)$, and there are three paths of length $r$ in
$S_0(\Ha)$ starting from $\Da$ with no common edge, as in the
following picture, then $\Da\in S_r(\Ha)$.
\[ \fbox{ \xygraph{
!{<0cm,0cm>;<.8cm,0cm>:<0cm,.8cm>::} !{(1,0) }*+{\bullet}="a"
!{(3,0)}*+{{}^{\Da}\bullet}="b" !{(5,0) }*+{\bullet}="c" !{(5,1)
}*+{\bullet}="d" "a"-@{.}^r"b" "b"-@{.}^r"c"
"b"-@`{p+(-1,0)}@{.}^r"d"} }
\]
\end{enumerate}
\end{lem}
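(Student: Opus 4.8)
The plan is to reduce each statement to an inclusion among explicit orders in $\matrici_2(K)$ and to verify that inclusion in coordinates, using three tools already at hand: the description $S_r(\Ha)=\{\Da\in V(\mathfrak{T})\mid\Ha\subseteq\Da^{[r]}\}$, Lemma~\ref{int1} (which identifies $\Da^{[t]}$ with the intersection of the maximal orders in the ball of radius $t$ about $\Da$), and Proposition~\ref{ll1}. Concretely, in each case one produces finitely many vertices $\Ea_1,\dots,\Ea_k$ known in advance to lie in $S_0(\Ha)$ — so that $\Ha\subseteq\bigcap_i\Ea_i$ — and then shows $\bigcap_i\Ea_i\subseteq\Da^{[r]}$; this gives $\Ha\subseteq\Da^{[r]}$, i.e.\ $\Da\in S_r(\Ha)$. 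The inclusion $\bigcap_i\Ea_i\subseteq\Da^{[r]}$ is a statement about a fixed configuration of vertices in $\mathfrak{T}$, invariant under $GL_2(K)$-conjugation (which commutes with $(\cdot)^{[s]}$), so it may be attacked in any convenient coordinates.

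For part (1), label the two neighbours $\Da_1,\Da_2$, with $\Da_1\in S_{r+1}(\Ha)$ and $\Da_2\in S_r(\Ha)$; then $\Da_1-\Da-\Da_2$ is a path of length two and $\Ha\subseteq\Da_1^{[r+1]}\cap\Da_2^{[r]}$. By Proposition~\ref{ll1}(2)--(3) this order equals $\bigl(\Da_1^{[1]}\cap\Da_2\bigr)^{[r]}$, so it suffices to prove the local inclusion $\Da_1^{[1]}\cap\Da_2\subseteq\Da^{[1]}$ and then apply $(\cdot)^{[r]}$, which preserves inclusions (Proposition~\ref{ll1}(1)) and raises the exponent to $r+1$ (Proposition~\ref{ll1}(3)). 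Normalising $\Da=\matrici_2(\oink_K)$ with $\Da_1,\Da_2$ corresponding to the lattices $\oink_Ke_1+\pi\oink_Ke_2$ and $\pi\oink_Ke_1+\oink_Ke_2$, one has $\Da_1^{[1]}=\oink_K+\pi\Da_1$, and a two-line matrix calculation shows that intersecting with $\Da_2$ pushes both off-diagonal entries into $\pi\oink_K$ while keeping the two diagonal entries congruent mod $\pi$, i.e.\ lands inside $\Da^{[1]}=\oink_K+\pi\Da$. (The hypotheses $\Da\in S_r(\Ha)$ and $\Da_1\in S_r(\Ha)$ are not used beyond what the rest already implies; this configuration is rigid, so the choice of coordinates is harmless.)

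For part (2), that the three length-$r$ paths from $\Da$ have no common edge means their initial edges are three distinct neighbours of $\Da$; hence their endpoints $\Ea_1,\Ea_2,\Ea_3$ satisfy $\rho(\Ea_i,\Ea_j)=2r$ for $i\neq j$, and $\Da$ is their median, at distance $r$ from each. Each $\Ea_i$ lies in $S_0(\Ha)$, so $\Ha\subseteq\Ea_1\cap\Ea_2\cap\Ea_3$, and everything reduces to the identity
\[
\Ea_1\cap\Ea_2\cap\Ea_3=\Da^{[r]}
\]
for any three maximal orders with median $\Da$ at distance $r$ from each. The inclusion $\supseteq$ is immediate from Lemma~\ref{int1}. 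For $\subseteq$ one works in coordinates: first straighten the configuration — by conjugations in $GL_2(\oink_K)$ fixing $\Da$ one may assume each geodesic $[\Da,\Ea_i]$ is a "straight line", so that in a fixed basis $\Ea_1=\bbmatrix{\oink_K}{\pi^{-r}\oink_K}{\pi^r\oink_K}{\oink_K}$, $\Ea_2=\bbmatrix{\oink_K}{\pi^r\oink_K}{\pi^{-r}\oink_K}{\oink_K}$, and $\Ea_3$ is the image of $\Ea_1$ under conjugation by $\lbmatrix1110$ (the base change to $e_1+e_2,\,e_1$) — and then a direct computation of $\Ea_1\cap\Ea_2\cap\Ea_3$ returns exactly the matrices with off-diagonal entries in $\pi^r\oink_K$ and diagonal entries congruent mod $\pi^r$, which is $\Da^{[r]}$. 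An alternative is induction on $r$, where the step amounts to $\Da^{[r-1]}\cap\Ea_1\cap\Ea_2\cap\Ea_3=\Da^{[r]}$.

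The routine parts are the tree combinatorics (no common edge $\Leftrightarrow$ distinct initial edges; existence and location of the median) and the reductions through Proposition~\ref{ll1}. The real work — and the only genuine computation — is the identity $\Ea_1\cap\Ea_2\cap\Ea_3=\Da^{[r]}$ in part (2): it is what upgrades "$\Ha$ lies in the three endpoints" to "$\Ha$ lies in every order of the radius-$r$ ball about $\Da$", i.e.\ what forces $r$-deepness. The subtlety is that the three legs of the tripod need not be straight, so the straightening step (checking that the relevant stabilisers in $GL_2(\oink_K)$ act transitively enough) cannot simply be skipped, and there is no conjugation shortcut of the Lemma~\ref{int1} type here, because the pointwise stabiliser of the three directions at $\Da$ is only the centre modulo $1+\pi\matrici_2(\oink_K)$.
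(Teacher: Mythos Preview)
Your proof is correct but takes a genuinely different route from the paper's. The paper argues indirectly: both statements, rewritten via $S_r(\Ha)=\{\Da:B[\Da;r]\subseteq S_0(\Ha)\}$, become closure properties of $X=S_0(\Ha)$ of the shape $(Y\subseteq X)\Rightarrow(Z\subseteq X)$, and such properties are stable under intersections in $X$; since $S_0(\Ha)=\bigcap_iS_0(\oink_K[a_i])$ for any set of generators $a_i$ of $\Ha$, this reduces the lemma to commutative $\Ha$, after which one simply inspects the explicit branches classified in \S4 (balls, thickened edges, thickened paths, nilpotent half-spaces). Your approach instead establishes the order inclusions $\Da_1^{[1]}\cap\Da_2\subseteq\Da^{[1]}$ and $\Ea_1\cap\Ea_2\cap\Ea_3=\Da^{[r]}$ by direct matrix computation, independently of \S4. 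This is more self-contained, and for part~(2) it in effect proves the tripod case of Theorem~\ref{thm2} (three maximal orders whose intersection is $\Da^{[r]}$) without the logical detour the paper later takes through this very lemma.

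One remark: your worry about straightening the third leg is overblown. Once $\Ea_1,\Ea_2$ are normalised on the standard apartment so that $\Ea_1\cap\Ea_2=\lbmatrix{\oink_K}{\pi^r\oink_K}{\pi^r\oink_K}{\oink_K}$, any $u=\lbmatrix{a}{\pi^rb}{\pi^rc}{d}$ in this Eichler order acts on $\oink_K^2/\pi^r\oink_K^2$ as the diagonal matrix $\mathrm{diag}(\bar a,\bar d)$. Writing $\Lambda_3=\oink_Kv+\pi^r\oink_Kw$ for an $\oink_K$-basis $(v,w)$ of $\oink_K^2$ with $\bar v\notin\{\bar e_1,\bar e_2\}$, the condition $u\Lambda_3\subseteq\Lambda_3$ reduces modulo $\pi^r$ to $\mathrm{diag}(\bar a,\bar d)\bar v\in(\oink_K/\pi^r)\bar v$, which forces $a\equiv d\pmod{\pi^r}$. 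Only the initial direction of the third geodesic enters; no further normalisation is needed.
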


\begin{proof}
Note that any property of the form $\Phi(X)=[(Y\subseteq
X)\implies(Z\subseteq X)]$ is stable under intersections. Recall
now that $\Da\in S_r(\Ha)$ if and only if the closed $r$-ball
$B[\Da;r]\subseteq\mathfrak{T}$ is contained in $S_0(\Ha)$, so
both properties in the lemma are of the given type. Now the result
follows by a straightforward case-by-case checking on the branches
described in \S 4.
\end{proof}

\begin{lem}\label{lt5}
For any order $\Ha$, the following properties hold:
\begin{enumerate}
\item If $S_{r+1}(\Ha)$ is not empty, then any maximal order in
$S_r(\Ha)$ has a neighbor in $S_{r+1}(\Ha)$. \item If
$S_{r+1}(\Ha)$ is empty, then $S_r(\Ha)$ is a (possibly infinite)
path.
\end{enumerate}
\end{lem}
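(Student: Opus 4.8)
The plan is to use the classification of branches given in \S4 together with the deepening behaviour of nested branches. Recall from Proposition~\ref{lt3} and its corollaries that the branch $S_0(\Ha)$ of any commutative order, hence of any order at all (since a general branch is an intersection of branches of commutative orders, as noted at the start of \S4), is one of the following connected shapes in $\mathfrak{T}$: the closed $t$-ball around a single vertex; the set of vertices at distance $\le t$ from a fixed edge (two neighbouring vertices); or the set of vertices at distance $\le t$ from a fixed bi-infinite path. In all three cases, the ``inner'' branches $S_r(\Ha)$ are obtained by shrinking the radius: by the Corollary following Lemma~\ref{int1}, $S_r(\Ha)$ is exactly the set of $r$-deep vertices of $S_0(\Ha)$, so $S_r(\Ha)$ is again a ball (radius $t-r$), or a ``thickened edge'' (radius $t-r$), or a ``thickened path'' (radius $t-r$), and once the radius drops below zero the branch is empty, except that in the thickened-path case the branch never empties but collapses to the core path itself. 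I would make this structural dichotomy the backbone of the argument.

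For part (2), suppose $S_{r+1}(\Ha)=\emptyset$. Then in the ball case $S_r(\Ha)$ is a single vertex, in the thickened-edge case it is a single edge (two vertices), and in the thickened-path case it is a bi-infinite path; a single vertex and a single edge are both finite paths, so in every case $S_r(\Ha)$ is a path. For part (1), suppose $S_{r+1}(\Ha)\neq\emptyset$. Then we are not in the ``collapsed'' situation, so $S_0(\Ha)$ has radius $t\ge r+1$ around its core $C$ (a vertex, an edge, or a path), $S_r(\Ha)$ is the set of vertices at distance $\le t-r$ from $C$, and $S_{r+1}(\Ha)$ is the set of vertices at distance $\le t-r-1$ from $C$. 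Given a vertex $\Da\in S_r(\Ha)$, let $\delta=\rho(\Da,C)\le t-r$. If $\delta\le t-r-1$, then $\Da$ itself lies in $S_{r+1}(\Ha)$ and a fortiori has a neighbour there (every vertex of $\mathfrak{T}$ has neighbours, and moving one step toward $C$ stays in $S_{r+1}(\Ha)$). If $\delta=t-r$, take the neighbour $\Da'$ of $\Da$ on a geodesic from $\Da$ to $C$; then $\rho(\Da',C)=\delta-1=t-r-1$, so $\Da'\in S_{r+1}(\Ha)$. Either way $\Da$ has a neighbour in $S_{r+1}(\Ha)$, which proves (1).

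The main obstacle is making the case analysis clean: one must be sure the list of admissible branch shapes from \S4 is genuinely exhaustive for arbitrary orders $\Ha$ (not just the commutative generators), and that intersecting such shapes does not produce something outside the list. This is exactly the ``straightforward case-by-case checking on the branches described in \S4'' invoked in the proof of Lemma~\ref{lt4}; here one additionally uses connectedness of branches (Proposition preceding Proposition~\ref{lt3}) to rule out disconnected configurations, and the $r$-deep characterization to control how the shape degrades as $r$ grows. Once that bookkeeping is in place, parts (1) and (2) are immediate geometric observations about balls and tubular neighbourhoods of convex sets in a tree, as sketched above. Alternatively — and this is the slicker route I would probably take to avoid re-deriving the full classification — one can argue directly from Lemma~\ref{lt4}: if $\Da\in S_r(\Ha)$ had no neighbour in $S_{r+1}(\Ha)$ while $S_{r+1}(\Ha)$ were nonempty, connectedness of $S_{r+1}(\Ha)$ and of $S_r(\Ha)$ forces a configuration contradicting part~(1) of Lemma~\ref{lt4}; and if $S_{r+1}(\Ha)=\emptyset$ while $S_r(\Ha)$ contained a vertex of valency $\ge 3$, then part~(2) of Lemma~\ref{lt4} would put that vertex in $S_1(\,\cdot\,)$-deep position and ultimately in $S_{r+1}(\Ha)$, a contradiction, so $S_r(\Ha)$ has all valencies $\le 2$ and, being connected, is a path.
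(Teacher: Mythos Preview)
Your main approach is circular. The classification of branch shapes you invoke---that $S_0(\Ha)$ is always a ball, a thickened edge, or a thickened infinite path---is exactly what \S5 is building towards; it is stated only \emph{after} Lemma~\ref{lt5} (see the Proposition and Corollary~\ref{cor51} that follow) and its proof uses Lemma~\ref{lt5}. The results of \S4 give this classification only for \emph{commutative} orders. You try to bridge the gap by saying a general branch is an intersection of commutative branches, but an intersection of thick paths and balls is not obviously again of that form; indeed this is precisely the content of \S5. The reason the analogous reduction works in Lemma~\ref{lt4} is that both statements there are of the shape $[(Y\subseteq X)\Rightarrow(Z\subseteq X)]$, which is stable under intersections in $X$; the statements of Lemma~\ref{lt5} are not of that shape, so you cannot simply ``check on the branches described in \S4.''

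Your alternative route at the end is the correct one, and it is exactly what the paper does. For (1), take $\Da'\in S_{r+1}(\Ha)$ and the path $\Da'=\Ea_0-\cdots-\Ea_n=\Da$ inside $S_r(\Ha)$ (connectedness); repeated application of Lemma~\ref{lt4}(1) pushes $\Ea_0,\dots,\Ea_{n-1}$ into $S_{r+1}(\Ha)$, so $\Ea_{n-1}$ is the desired neighbour. For (2), if $\Da\in S_r(\Ha)$ has three neighbours in $S_r(\Ha)$, each of those neighbours is $r$-deep in $S_0(\Ha)$, so extending each edge by a length-$r$ path away from $\Da$ produces three paths of length $r+1$ in $S_0(\Ha)$ with no common edge; Lemma~\ref{lt4}(2) then gives $\Da\in S_{r+1}(\Ha)$. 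Make this the proof and drop the classification detour.
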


\begin{proof}
Assume $S_{r+1}(\Ha)\neq\emptyset$. Let $\Da\in S_r(\Ha)$ and let
$\Da'\in S_{r+1}(\Ha)\subseteq S_r(\Ha)$. Since $S_r(\Ha)$ is
connected, there is a path $\Da'=\Ea_0-\cdots-\Ea_n=\Da$ in
$S_r(\Ha)$. Successive applications of the first part of the
previous lemma prove that $\Ea_0,\dots,\Ea_{n-1}$ are all in
$S_{r+1}(\Ha)$. First statement follows.

Assume now that $S_r(\Ha)$ is not a path. Then there exists a
vertex $\Da\in S_r(\Ha)$ with three neighbors also in $S_r(\Ha)$.
extending all of these lines by a path of length $r$ heading
in the opposite direction, as the picture shows, we obtain three
segments of length $r+1$ starting from $\Da$.
\[ \fbox{ \xygraph{
!{<0cm,0cm>;<.8cm,0cm>:<0cm,.8cm>::} !{(1,0) }*+{\bullet}="a"
!{(4,0)}*+{\bullet}="b" !{(4,-.5)}*+{{}^\Da}="h" !{(7,0)
}*+{\bullet}="c" !{(6,1) }*+{\bullet}="d" !{(3,0) }*+{\bullet}="e"
!{(5,0)}*+{\bullet}="f" !{(4,1) }*+{\bullet}="g" "a"-@{.}^r"e"
"c"-@{.}^r"f" "d"-@{.}^r"g" "b"-"f" "b"-"g" "b"-"e"}}
\]
It follows from the second part of the previous lemma that $\Da\in
S_{r+1}(\Ha)$ and the result follows.
\end{proof}

We call a subset $S$ of $\mathfrak{T}$ an $r$-thick path if it
consist of all vertices at a distance of at most $r$ from some,
finite or infinite, path. Next result is immediate from the
preceding corollary by an obvious induction.

\begin{prop}
If an order $\Ha$ satisfies $S_r(\Ha)\neq\emptyset$ and
$S_{r+1}(\Ha)=\emptyset$, then $S_k(\Ha)$ is an $(r-k)$-thick path
for every $0\leq k \leq r$.
\end{prop}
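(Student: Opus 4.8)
The plan is to prove the sharper statement that, writing $P=S_r(\Ha)$ — which is a (possibly infinite) path by the second part of Lemma~\ref{lt5}, since $S_{r+1}(\Ha)=\emptyset$ — one has
$$S_k(\Ha)=\{\Da\in V(\mathfrak{T})\mid \rho(\Da,P)\leq r-k\}\qquad\textnormal{for every }0\leq k\leq r,$$
where $\rho$ denotes the distance in $\mathfrak{T}$. Granting this, the proposition is immediate from the definition of an $(r-k)$-thick path. Throughout I would use the characterization, coming from the corollary to Lemma~\ref{int1} that identifies $S_j(\Ha)$ with the set of $j$-deep orders in $S_0(\Ha)$, that $\Da\in S_j(\Ha)$ if and only if the closed ball $B[\Da;j]$ is contained in $S_0(\Ha)$; in particular the branches are nested, $S_{j+1}(\Ha)\subseteq S_j(\Ha)$.

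For the inclusion $\supseteq$ I would argue directly: given $\Da$ with $\rho(\Da,P)\leq r-k$, let $p\in P$ be the vertex of $P$ nearest to $\Da$. Since $p\in S_r(\Ha)$ we have $B[p;r]\subseteq S_0(\Ha)$, and the triangle inequality yields $B[\Da;k]\subseteq B[p;\,k+\rho(\Da,p)]\subseteq B[p;r]\subseteq S_0(\Ha)$, because $k+\rho(\Da,p)\leq k+(r-k)=r$. Hence $\Da\in S_k(\Ha)$.

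For the inclusion $\subseteq$ I would use downward induction on $k$, from $k=r$ (trivial) down to $k=0$. In the inductive step, $S_{k+1}(\Ha)\supseteq S_r(\Ha)\neq\emptyset$ by the nesting above, so the first part of Lemma~\ref{lt5} applies and every $\Da\in S_k(\Ha)$ has a neighbour in $S_{k+1}(\Ha)$; thus $\rho\big(\Da,S_{k+1}(\Ha)\big)\leq 1$. By the induction hypothesis every vertex of $S_{k+1}(\Ha)$ lies within distance $r-k-1$ of $P$, so $\rho(\Da,P)\leq 1+(r-k-1)=r-k$, as needed.

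I do not anticipate a real obstacle; the one design choice that matters is to prove the explicit ball description around the single fixed path $P=S_r(\Ha)$ rather than trying to upgrade the ``thick path'' shape of $S_{k+1}(\Ha)$ to that of $S_k(\Ha)$ abstractly, which would force one to keep track of how the central path changes from level to level. With the fixed path $P$ both inclusions reduce to routine distance bookkeeping; the only points requiring a little care are applying Lemma~\ref{lt5} and the ball characterization at the correct index and verifying the estimate $k+\rho(\Da,p)\leq r$.
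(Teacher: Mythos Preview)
Your proof is correct and is precisely the ``obvious induction'' the paper alludes to: the paper merely states that the proposition is immediate from Lemma~\ref{lt5} by induction, and your argument is a careful fleshing-out of that induction, using Lemma~\ref{lt5}(2) for the base case $k=r$, Lemma~\ref{lt5}(1) for the step $\subseteq$, and the ball characterization from the corollary to Lemma~\ref{int1} for $\supseteq$. Your observation that one should prove the stronger statement with the fixed central path $P=S_r(\Ha)$ is exactly the design choice that makes the induction go through cleanly, and is implicitly what the paper has in mind.
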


\begin{cor}\label{cor51}
If an order $\Ha$ is contained in finitely many maximal orders,
then there exists an Eichler order $\Da$ and a non-negative
integer $t$ such that $S_r(\Ha)=S_r(\Da^{[t]})$ for every
non-negative integer $r$. In particular, this is the case whenever
$\Ha$ has maximal rank.
\end{cor}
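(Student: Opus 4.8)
The plan is to combine the structural description of branches with the explicit "thick path" picture just obtained. First I would observe that if $\Ha$ is contained in finitely many maximal orders, then $S_0(\Ha)$ is a finite set; since each $S_{r+1}(\Ha)\subseteq S_r(\Ha)$, there is a largest $t\geq 0$ with $S_t(\Ha)\neq\emptyset$ (taking $t$ so that $S_{t+1}(\Ha)=\emptyset$). By the preceding Proposition, $S_0(\Ha)$ is then a $t$-thick path, i.e.\ the set of all vertices at distance at most $t$ from some path $P$; because $S_0(\Ha)$ is finite, $P$ is finite, say with endpoints $\Da_1,\Da_2$ at distance $e\geq 0$ (the case of a single vertex being $e=0$).

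Next I would produce the candidate Eichler order. Let $\Da=\Da_1\cap\Da_2$, the Eichler order of level $e$ with $S_0(\Da)=P$ as the path between $\Da_1$ and $\Da_2$. By Proposition \ref{lt3} and its corollary (the description of $S_0(\Da^{[t]})$ as the set of vertices at distance at most $t$ from $S_0(\Da)$), we get $S_0(\Da^{[t]})=S_0(\Ha)$. Now I want to upgrade this equality at level $0$ to an equality at every level $r$. The point is that both sides are determined by their level-$0$ incarnation via the last corollary of \S2: $S_r$ of anything is exactly the set of $r$-deep maximal orders of its $0$-branch. Since $S_0(\Da^{[t]})=S_0(\Ha)$, applying "take the $r$-deep vertices" to both yields $S_r(\Da^{[t]})=S_r(\Ha)$ for every $r\geq 0$, which is the claim.

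Finally, for the last sentence I would invoke Proposition \ref{basics}(4): if $\Ha$ has maximal rank, then $\Ha$ contains an order which itself is an intersection of finitely many maximal orders (indeed one checks that a maximal-rank order is contained in only finitely many maximal orders, since any maximal order containing $\Ha$ corresponds to an $\Ha$-invariant lattice, and maximal rank forces $\Ha\otimes K=\matrici_2(K)$, so $S_0(\Ha)$ is bounded). Hence the hypothesis of the corollary is met and the conclusion applies.

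The main obstacle I anticipate is justifying that "$S_0$ determines $S_r$" step cleanly — one must be sure that the operation $\Ha\mapsto\Da^{[t]}$ and the passage to $r$-deep vertices commute in the right way, i.e.\ that $S_r(\Da^{[t]})$ really depends only on the \emph{set} $S_0(\Da^{[t]})$ and not on the order $\Da^{[t]}$ itself. This follows from the corollary to Lemma \ref{int1} characterizing $S_r(\Ha)$ as the $r$-deep vertices of $S_0(\Ha)$, so the argument is sound, but it is the conceptual heart of the proof and the place where care is needed; the finiteness bookkeeping (existence of the top index $t$, finiteness of $P$) is routine by comparison.
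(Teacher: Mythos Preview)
Your argument is correct and is exactly the natural reading of the corollary as the paper intends it: pick the largest $t$ with $S_t(\Ha)\neq\emptyset$, use the preceding Proposition to identify $S_0(\Ha)$ as a $t$-thick (finite) path with core $P=[\Da_1,\Da_2]$, set $\Da=\Da_1\cap\Da_2$, and then invoke Proposition~\ref{lt3} together with the last corollary of \S2 (that $S_r$ is the set of $r$-deep vertices of $S_0$) to transfer the equality $S_0(\Ha)=S_0(\Da^{[t]})$ to all levels $r$. The paper gives no separate proof of this corollary, so your write-up simply makes the implicit steps explicit.

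One small clean-up: your appeal to Proposition~\ref{basics}(4) in the last paragraph is misplaced---that item goes in the opposite direction (it deduces an inclusion of orders from an inclusion of $0$-branches) and is not what justifies finiteness of $S_0(\Ha)$ for maximal-rank $\Ha$. The parenthetical argument you give afterwards (maximal rank forces $\pi^N\Da_0\subseteq\Ha$ for some maximal order $\Da_0$ and some $N$, so every maximal order containing $\Ha$ lies in the ball $B[\Da_0;N]$) is the right one; just drop the reference to \ref{basics}(4).
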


\begin{prop}
If an order $\Ha$ satisfies $S_r(\Ha)\neq\emptyset$ for every $r$,
then $\Ha=\oink_K[a]$ for some nilpotent element $a$.
\end{prop}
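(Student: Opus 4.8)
The plan is to exploit the structure theorem for branches. By the preceding proposition, if $S_r(\Ha)\neq\emptyset$ for all $r$, then each $S_r(\Ha)$ cannot be a thick path of finite thickness with a finite ``core'' path — for then $S_{r+1}$ would eventually be empty. So either the branches $S_r(\Ha)$ are $t_r$-thick paths whose underlying path is infinite (a full line or half-line in $\mathfrak{T}$), or they fail to be paths at all, in which case Lemma \ref{lt5}(2) forces $S_{r+1}(\Ha)\neq\emptyset$ and an induction shows the thickness of $S_0(\Ha)$ around its core is unbounded. In every case, $S_0(\Ha)$ contains arbitrarily large balls. Now recall from Proposition \ref{basics}(4) (applied with $\Ha'=\Da^{[k]}$ ranging over maximal orders $\Da$ and integers $k$, or rather its consequence via Lemma \ref{int1}) that $\Ha\subseteq\bigcap_{\Da\in B[\Da_0;k]}\Da=\Da_0^{[k]}$ whenever $B[\Da_0;k]\subseteq S_0(\Ha)$. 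Hence $\Ha\subseteq\bigcap_{k\geq0}\Da_0^{[k]}$ for a suitable $\Da_0$.

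The first key step is therefore to compute $\bigcap_{k\geq0}\Da_0^{[k]}=\bigcap_{k\geq0}(\oink_K+\pi^k\Da_0)$. Writing $\Da_0=\matrici_2(\oink_K)$ after conjugation, an element $x$ lies in this intersection iff $x-\lambda_k\in\pi^k\matrici_2(\oink_K)$ for some scalars $\lambda_k\in\oink_K$ and all $k$; this forces $x$ to be congruent to a scalar modulo every power of $\pi$, so $x=\lambda+n$ with $\lambda\in\oink_K$ and $n\in\bigcap_k\pi^k\matrici_2(\oink_K)$ — but that last intersection is $\{0\}$. So naively $\bigcap_k\Da_0^{[k]}=\oink_K$, which is too small; the resolution is that the ``core'' of $S_0(\Ha)$ need not be a single vertex. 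If the core is an infinite path, $\Ha$ is contained in $\bigcap_{i}\Da_i^{[t]}$ over a whole line $\{\Da_i\}$ of maximal orders; I would then show this intersection, as $t\to\infty$ along with the line, is exactly $\oink_K[a]$ for the nilpotent $a$ fixing the corresponding end of the tree (the flag stabilized along that line), using the explicit matrix description as in the Proposition on $\Omega=\oink_K[a]$, $a^2=0$, in \S4: there $S_0(\oink_K[a])$ is precisely a half-line thickened, and its thickening grows without bound as we pass to $\oink_K[a]^{[-t]}$, i.e.\ the branches of $\oink_K[a]$ itself are cofinal.

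The second key step is the converse containment $\oink_K[a]\subseteq\Ha$, i.e.\ showing $\Ha$ is not strictly smaller. For this I would argue that $\Ha$, being an order, is a lattice of full $\oink_K$-rank $4$ inside $\alge$ only if it has maximal rank — but the hypothesis does not assume that, so in fact the correct reading is: $\Ha$ has some rank, and the condition $S_r(\Ha)\neq\emptyset$ for all $r$ is so restrictive that $\Ha$ is forced to be rank $2$ of the form $\oink_K[a]$. Concretely, if $\Ha$ contained any element $b$ not of the form $\lambda+\mu a$, then $\oink_K[a,b]$ would be a strictly larger order, hence (by Proposition \ref{basics}(3)) $S_r(\oink_K[a,b])\subseteq S_r(\oink_K[a])$, and I must show this shrinks to $\emptyset$ for large $r$. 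This is where the case analysis of \S4 does the work: adding any semisimple element, or any nilpotent not proportional to $a$, to $\oink_K[a]$ produces an order whose branch is eventually empty (a maximal order has $S_r=\emptyset$ for $r\geq1$; a ramified quadratic order gives a single thickened edge; etc.), because two distinct branches of the ``thick half-line'' type with different ends intersect in a bounded thick path.

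The main obstacle will be organizing this last case analysis cleanly: one wants a single statement saying ``the only orders all of whose branches are nonempty are the $\oink_K[a]$ with $a$ nilpotent,'' and the natural route is to first nail down that $S_0(\Ha)$ must be a half-line thickened to unbounded depth, then observe that the half-line determines an end $\xi$ of $\mathfrak{T}$, that end determines a unique line of flags, and the only elements of $\matrici_2(K)$ stabilizing every lattice along that approaching sequence (up to the integrality bookkeeping of the $\oink_K+\pi^k(-)$ operation) are the scalars plus the nilpotents killing $\xi$. I would present this via the explicit coordinates $a=\lbmatrix0010$, $\Da_i=\lbmatrix{\oink_K}{\pi^i\oink_K}{\pi^{-i}\oink_K}{\oink_K}$ already set up in \S4, verify that $\bigcap_{i\geq0}\Da_i=\oink_K[a]$ by a one-line matrix computation, and then invoke that the branch of $\oink_K[a]$ is cofinal among thick half-lines to conclude $\Ha\subseteq\oink_K[a]$, with equality forced by maximality of $\oink_K[a]$ among orders with cofinal branches.
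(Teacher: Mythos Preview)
Your first two steps have a genuine gap. The ``core'' you want, namely $\bigcap_r S_r(\Ha)$, is actually empty even in the target case $\Ha=\oink_K[a]$ with $a$ nilpotent: by \S4 the set $S_0(\oink_K[a])$ is the union $\bigcup_{i\ge0} B[\Da_i;i]$, and no vertex of $\mathfrak T$ is $r$-deep in this set for every $r$ (indeed $\Da_{-1}\notin S_0$, so no ball around a fixed vertex is contained in $S_0$). Thus there is no infinite path of vertices lying in all $S_r(\Ha)$, and your plan of extracting a containment $\Ha\subseteq\bigcap_i\Da_i^{[t]}$ along such a core does not get off the ground. Your own computation $\bigcap_k\Da_0^{[k]}=\oink_K$ was already a warning sign; the fix is not to move to an infinite core, because none exists.

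The good news is that your ``second key step'' is mislabeled but essentially correct, and it is the whole proof. You write it as the converse inclusion, yet what you actually describe is exactly the forward direction: for any $b\in\Ha$ one has $S_r(\oink_K[b])\supseteq S_r(\Ha)\neq\emptyset$ for all $r$, and by the \S4 classification the only commutative orders with this property are $\oink_K[a]$ with $a$ nilpotent (field orders have finite branches, and $\oink_K\times\oink_K$ gives a single maximal path with no deep points). So every $b\in\Ha$ is a scalar plus a nilpotent in $\Ha$. If $\Ha$ contained two linearly independent nilpotents $a,a'$, their half-lines would determine distinct ends of $\mathfrak T$ (ends correspond to Borel subgroups, and independent nilpotents lie in different Borels), hence $S_0(\oink_K[a])\cap S_0(\oink_K[a'])$ is finite, contradicting $S_r(\Ha)\neq\emptyset$ for all $r$. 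Thus $\Ha\subseteq\oink_K[a]$ for one fixed nilpotent $a$, and then automatically $\Ha=\oink_K[\pi^m a]$ for some $m\ge0$. This is precisely the paper's argument; just drop steps one and two and start here.
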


\begin{proof}
For every element $b\in\Ha$, we have $\oink_K[b]=\oink_K[a]$ for
some nilpotent element $a$ by the computations in \S4. Now, if
$\Ha$ contain two linearly independent nilpotent elements, the
explicit description of the branch $S_0\big(\oink_K[a]\big)$ plus
the fact that the stabilizers of ends in the Bruhat-Tits tree are
the borel subgroups of $\mathrm{GL}_2(K)$ (see \cite{trees}, \S
II.1.3) show that the half infinite path used to construct
$S_0\big(\oink_K[a]\big)$ and $S_0\big(\oink_K[a']\big)$ cannot
have the same end, and therefore both branches intersect in a
finite set. The result follows.
\end{proof}

\begin{lem}
An order $\Ha$ is the intersection of a finite family of maximal
orders if and only if $\Ha=\Da_0^{[r]}$ for some Eichler order
$\Da_0$ and some non-negative integer $r\geq0$.
\end{lem}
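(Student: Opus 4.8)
The plan is to prove both implications by combining the local structure theory of branches developed in \S2 and \S4 with the preceding corollaries of this section. The statement asserts that, locally, the orders of the form $\Da_0^{[r]}$ with $\Da_0$ Eichler are exactly the orders expressible as a finite intersection of maximal orders.

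\textbf{The easy direction.} Suppose $\Ha=\Da_0^{[r]}$ for an Eichler order $\Da_0=\Da_1\cap\Da_2$ of level $d$ and some $r\geq0$. By Proposition \ref{ll1}(2) and (3) we have $\Da_0^{[r]}=(\Da_1\cap\Da_2)^{[r]}=\Da_1^{[r]}\cap\Da_2^{[r]}$, and by Lemma \ref{int1} each $\Da_i^{[r]}$ is itself the intersection of all maximal orders within distance $r$ of $\Da_i$. Hence $\Ha$ is a finite intersection of maximal orders. (If $d=0$ this is exactly Lemma \ref{int1}; the general case is just the intersection of two such.)

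\textbf{The main direction.} Suppose now $\Ha$ is the intersection of a finite family of maximal orders $\{\Da^{(1)},\dots,\Da^{(n)}\}$. In particular $\Ha$ is contained in finitely many maximal orders, so by Corollary \ref{cor51} there is an Eichler order $\Da_0$ and an integer $t\geq0$ with $S_r(\Ha)=S_r(\Da_0^{[t]})$ for every $r\geq0$; in particular $S_0(\Ha)=S_0(\Da_0^{[t]})$. Since $\Da_0^{[t]}$ is a finite intersection of maximal orders (by the easy direction just proved), Proposition \ref{basics}(4) gives $\Ha\subseteq\Da_0^{[t]}$. For the reverse inclusion, observe that by hypothesis $\Ha=\bigcap_j\Da^{(j)}$, and each $\Da^{(j)}\in S_0(\Ha)=S_0(\Da_0^{[t]})$ means, by the corollary following Lemma \ref{int1}, that each $\Da^{(j)}$ lies within distance $t$ of a fixed maximal order $\Ea$ in $S_0(\Da_0)$ — more precisely, $S_0(\Da_0^{[t]})$ is the $t$-neighbourhood of the path realizing $\Da_0$, so every $\Da^{(j)}$ contains $\Da_0^{[t]}$ by Proposition \ref{basics}(5) / the corollary to Lemma \ref{int1}. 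Taking the intersection over $j$ yields $\Da_0^{[t]}\subseteq\bigcap_j\Da^{(j)}=\Ha$. Thus $\Ha=\Da_0^{[t]}$, as desired.

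\textbf{Expected obstacle.} The only delicate point is making the reverse inclusion $\Da_0^{[t]}\subseteq\Ha$ watertight: one must be sure that \emph{every} maximal order occurring in the given finite family lies in $S_0(\Da_0^{[t]})$ and hence contains $\Da_0^{[t]}$, which is precisely the content of Proposition \ref{basics}(5) together with the corollary to Lemma \ref{int1} identifying $S_0(\Da_0^{[t]})$ with a $t$-thick path. Once that identification is invoked, both inclusions follow formally, so the proof is short; the substance has already been done in Corollary \ref{cor51} and Lemma \ref{int1}.
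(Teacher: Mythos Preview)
Your proof is correct and follows essentially the same approach as the paper: the easy direction via Proposition~\ref{ll1}(2) and Lemma~\ref{int1}, and the main direction via Corollary~\ref{cor51} followed by matching $S_0$-branches. The only difference is in the reverse inclusion $\Da_0^{[t]}\subseteq\Ha$: the paper simply applies Proposition~\ref{basics}(4) a second time with the roles swapped (now $\Ha$ is the given intersection of maximal orders and $S_0(\Da_0^{[t]})\supseteq S_0(\Ha)$), whereas you unwind this by hand, observing that each $\Da^{(j)}\in S_0(\Ha)=S_0(\Da_0^{[t]})$ and hence $\Da_0^{[t]}\subseteq\Da^{(j)}$. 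Note that this last implication is immediate from the \emph{definition} of $S_0$; you do not need Proposition~\ref{basics}(5) or the thick-path description here, and invoking them obscures what is really a one-line tautology.
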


\begin{proof}
First we prove that if $\Ha$ is the intersection of a finite
family of maximal orders, so is $\Ha^{[t]}$ for every $t$. In
fact, if $\Ha=\bigcap_{\Da\in\Phi}\Da$, then
$\Ha^{[t]}=\bigcap_{\Da\in\Phi}\Da^{[t]}$ by Proposition
\ref{ll1}. By Lemma \ref{int1}, every order on the right of this
identity is the intersection of a family of maximal orders. It
follows that every order of the form $\Ha=\Da_0^{[r]}$, where
$\Da_0$ is an Eichler order, is the intersection of a family of
maximal orders. On the other hand, if $\Ha$ is the intersection of
a finite family of maximal orders, then $\Ha=\Da_0^{[r]}$ by
Corollary \ref{cor51} and property (4) in Proposition
\ref{basics}.
\end{proof}

\paragraph{Proof of Theorem \ref{thm2}}
Note that all properties are local, and trivial at places
ramifying $\alge$. We proceed locally. It is clear that (3)
implies (2), and (2) implies (1) from the preceding lemma. Let
$\Ha=\Da^{[r]}$ for an Eichler order $\Da$ of level $d$, say
$\Da=\Da_1\cap\Da_2$, where $\Da_1$ and $\Da_2$ are two maximal
orders at distance $d$. Let $\Da_3$, $\Da_4$, and $\Da_5$ be
maximal orders located as the picture shows:
\[ \fbox{ \xygraph{
!{<0cm,0cm>;<.8cm,0cm>:<0cm,.8cm>::} !{(0,1) }*+{\bullet}="a"
!{(1,1)}*+{\bullet}="b" !{(4,1)}*+{\bullet}="c" !{(5,1)
}*+{\bullet}="d" !{(2.5,1) }*+{{}}="e" !{(2.5,2.5)
}*+{\bullet}="f" !{(0,0.5) }*+{\Da_3}="a1"
!{(1,0.5)}*+{\Da_1}="b1" !{(4,0.5)}*+{\Da_2}="c1" !{(5,0.5)
}*+{\Da_4}="d1" !{(3,2.5) }*+{\Da_5}="f1" "a"-@{.}^r"b"
"c"-@{.}^r"d" "e"-@{.}^r"f" "b"-_d"c"}}
\]
In other words, for instance, the path from $\Da_5$ to $\Da_3$
passes trough $\Da_1$, and $\rho(\Da_3,\Da_1)=r$.  Let
$\Ha'=\Da_3\cap\Da_4\cap\Da_5$ and let $S=S_0(\Ha')$. In
particular $S$ is a branch containing $\Da_3$, $\Da_4$, and
$\Da_5$. It follows from (1) in Lemma \ref{lt4} that there is an
$r$-deep vertex in the path joining $\Da_1$ and $\Da_2$. Now
reasoning as in the proof of Lemma \ref{lt5}, we see that both
$\Da_1$ and $\Da_2$, as much as every vertex in between, are
$r$-deep in $S$. We conclude that $S\supseteq S_0(\Ha)$, whence
$S=S_0(\Ha)$. Now $\Ha=\Ha'$ by (4) of Proposition \ref{basics},
and the result follows.\qed

\begin{ex}
Let $\Ha=\oink_K[\pi^ri,\pi^sj]$ where $i$ and $j$ are as in the
example at the end of \S4. Then $S_0(\Ha)$ contains every vertex
whose distance to the line $S_0(\oink_K[i])$ is no larger than $r$
and whose distance to the line $S_0(\oink_K[j])$ is no larger than
$s$. Let $\Da$ be the unique maximal order containing $i$ and $j$.
Then $S_0(\Ha)$ contains the two vertices in $S_0(\oink_K[i])$ at
a distance $s$ from $\Da$ and the two vertices in
$S_0(\oink_K[j])$ at a distance $r$ from $\Da$. In particular, if
$r<s$, it follows as in the preceding proof that
$S_0(\Ha)=S_0(\Ea^{[r]})$ for some Eichler order $\Ea$ of level
$2(s-r)$.
\end{ex}

\section{Examples}

Let $\Ha$ be a suborder of a global order $\Da=\oink_X+I\Da'$,
where $\Da'$ is an Eichler order. Assume first $K\Ha=L$ is a
field. Note that the global spinor image $H(\Da|\Ha)=\prod_\wp
H_\wp(\Da|\Ha)$ contains the group $N_{L/K}(J_L)K^*$ corresponding
to $L$, whence the representation field
 $F(\Da|\Ha)$ is a subfield of $L$. It follows that $F(\Da|\Ha)$
equals $K^*$ unless the local spinor image $H_\wp(\Da|\Ha)$ equals
$N_{L_\wp/K_\wp}(L_\wp^*)$ at every place $\wp$. Since the spinor
class field $\Sigma=\Sigma_{\Da}=\Sigma_{\Da'}$ is unramified,
split completely at every place where $\alge$ ramifies, and at
every place where the level of $\Da'$ is odd, we conclude that
$F(\Da|\Ha)=K^*$ unless $H_\wp(\Da|\Ha)=\oink_\wp^*K_\wp^{*2}$ at
every local place where $L/K$ is inert. Note that for local
unramified field extension $L_\wp/K_\wp$, the branch
$S_0(\Ha_\wp)$ of the order $\Ha_\wp=\oink_{L_\wp}^{[t]}$ is the
set of vertices at a distance not exceeding $t$ from the unique
maximal order containing $\oink_{L_\wp}$. It follows that the
diameter of $S_r(\Ha_\wp)$ is $2(t-r)$. On the other hand
$S_r(\Ha_\wp)$ is infinite when $L=K\Ha$ is not a field at $\wp$,
so that $H_\wp(\Da|\Ha)=K_\wp^*$ in this case. Next result
follows, generalizing the results in \cite{Chan} or \cite{Guo}:
\begin{prop} In the preceding notations, When $L=K\Ha$ is a
quadratic extension of $K$, we have $F(\Da|\Ha)=K$ unless the
following conditions are satisfied:
\begin{enumerate}
\item $L$ is contained in the spinor class field $\Sigma$. \item
For every place $\wp$ inert for $L/K$, we have
$\Ha_\wp=\oink_{L_\wp}^{\left[v_\wp(I)+\frac {d_\wp}2\right]}$
where $v_\wp$ denotes valuation at $\wp$, while $d_\wp$ is the
level at $\wp$ of the Eichler order $\Da'$. \end{enumerate} In the
latter case $F(\Da|\Ha)=L$. On the other hand, if $L=K\Ha$ is not
a field, then $F(\Da|\Ha)=K$.
\end{prop}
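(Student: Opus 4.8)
The plan is to combine the general criterion of Theorem \ref{th11}, which says that $F(\Da|\Ha)$ is the largest subfield of $\Sigma$ that splits completely at every place $\wp$ where $\Ha_\wp$ embeds in $\oink_\wp+I_\wp\Da_1$ for an Eichler order $\Da_1$ of level strictly larger than $d_\wp$, with the reductions carried out in the paragraph preceding the statement. First I would dispose of the case where $L=K\Ha$ is not a field: then $L_\wp\cong K_\wp\times K_\wp$ at every place, so $\Ha_\wp$ is (conjugate to) an order of the form $\oink_{L_\wp}^{[t]}$ inside a $K_\wp\times K_\wp$, whose branch is an $(t-r)$-thick path and hence has infinite diameter for all $r$; by Corollary \ref{cor32} this forces $H_\wp(\Da|\Ha)=K_\wp^*$ everywhere, so $H(\Da|\Ha)\supseteq\prod_\wp K_\wp^* \supseteq$ everything, giving $F(\Da|\Ha)=K$. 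The same argument handles the local factors where $L$ splits or ramifies in the quadratic case.

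Next, for $L$ a quadratic field I would use the observation already made that $H(\Da|\Ha)\supseteq N_{L/K}(J_L)K^*$, so $F(\Da|\Ha)\subseteq L$; combined with $F(\Da|\Ha)\subseteq\Sigma$ (it is a subfield of the spinor class field by definition), we get that $F(\Da|\Ha)$ is nontrivial only if $L\subseteq\Sigma$, which is condition (1). Granting $L\subseteq\Sigma$, the field $F(\Da|\Ha)$ is either $K$ or $L$, and it equals $L$ precisely when $H_\wp(\Da|\Ha)=N_{L_\wp/K_\wp}(L_\wp^*)$ for every $\wp$. At archimedean places and places outside $X$, and at places where $L/K$ ramifies or splits, the norm group already is all of $K_\wp^*$ or the computation is forced, so the only constraint is at the finite places $\wp$ inert in $L/K$: there we need $H_\wp(\Da|\Ha)=\oink_\wp^*K_\wp^{*2}=N_{L_\wp/K_\wp}(L_\wp^*)\neq K_\wp^*$.

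Then I would translate this last local condition into the statement about the shape of $\Ha_\wp$. Write $\Ha_\wp=\oink_{L_\wp}^{[t_\wp]}$ for some $t_\wp\geq0$, using the lemma that any order in the semisimple commutative algebra $L_\wp$ has this form. The proposition in \S4 gives that $S_0(\oink_{L_\wp})$ is a single vertex (since $L_\wp/K_\wp$ is unramified), so $S_r(\Ha_\wp)=S_r(\oink_{L_\wp}^{[t_\wp]})$ is the ball of radius $t_\wp-r$ about that vertex, of diameter $2(t_\wp-r)$, where $r=v_\wp(I)$ is the relevant shift coming from $\Da_\wp=\oink_\wp+I_\wp\Da'_\wp$. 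By Corollary \ref{cor32} and Proposition \ref{prop31}, the spinor image $H_\wp(\Da|\Ha)$ equals $K_\wp^*$ if the diameter $2(t_\wp-v_\wp(I))$ exceeds $d_\wp$ and equals $\oink_\wp^*K_\wp^{*2}$ exactly when the diameter is $\leq d_\wp$; but it cannot be less than $d_\wp$ because $\Ha_\wp\subseteq\Da_\wp^{[?]}$ already forces two vertices of the branch at distance $d_\wp$ (by Proposition \ref{basics}(5)), and the diameter is even. Hence the condition $H_\wp(\Da|\Ha)\neq K_\wp^*$ is equivalent to $2(t_\wp-v_\wp(I))=d_\wp$, i.e. $t_\wp=v_\wp(I)+d_\wp/2$, which is condition (2) (and in particular forces $d_\wp$ even, automatically true at inert places since otherwise $\Sigma$ splits there and $L\not\subseteq\Sigma$). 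The main obstacle is bookkeeping the shift by $v_\wp(I)$ correctly and making sure the parity/diameter inequalities are applied in the right direction; everything else is assembling already-proven results.
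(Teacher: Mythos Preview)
Your proposal is correct and follows essentially the same route as the paper: the argument in the paragraph preceding the proposition already reduces everything to comparing the diameter $2(t_\wp-v_\wp(I))$ of $S_{v_\wp(I)}(\Ha_\wp)$ with the level $d_\wp$ at inert places, via Corollary~\ref{cor32} and Proposition~\ref{prop31}, and handles the split/non-field case by the infinite-diameter observation. Your write-up merely fills in a few details the paper leaves implicit (notably the lower bound on the diameter coming from Proposition~\ref{basics}(5) and the parity remark on $d_\wp$), but the method is the same.
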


%By noting that $F(\Da|\Ha)\subseteq F(\Da'|\Ha)$ whenever
%$\Da'\supseteq \Da$, next result follows:
%
%\begin{prop} Let $\Da$ an arbitrary order of maximal rank and
%let $\Da'=\oink_X+I\Da''$ be the intersection of the maximal
%orders containing $\Da$. Let $\Ha=\oink_X+I\oink$
%
% Assume that $L=K\Ha$ is a quadratic
%extension of $K$, and the following conditions are satisfied:
%\begin{enumerate}
%\item $L$ is contained in the spinor class field $\Sigma(\Da')$.
%\item The extension $L/K$ splits completely at every place where,
%we have $\Ha_\wp=\oink_{L_\wp}^{\left[v_\wp(I)+\frac
%{d_\wp}2\right]}$  while $d_\wp$ is the level at $\wp$ of the
%Eichler order $\Da''$.
%\end{enumerate} Then $F(\Da|\Ha)=L$.
%\end{prop}

Assume now that $K\Ha$ is three-dimensional. This implies that
$\alge=\matrici_2(K)$, and $K\Ha$ is conjugated to the ring of
matrices of the form $\lbmatrix ab0c$. In particular, $\Ha$ is
contained in a conjugate of the order $\Ha_0=\lbmatrix
{\oink_X}{\oink_X}{0}{\oink_X}$. Since the branch
$S_0(\Ha_{0\wp})$ is infinite at every place $\wp$, next result
follows:
\begin{prop} If $\Ha$ is an order of rank $3$, then $F(\Da|\Ha)=K$.
\end{prop}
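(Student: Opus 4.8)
The plan is to prove that the local relative spinor image $H_\wp(\Da|\Ha)$ equals $K_\wp^*$ at \emph{every} place $\wp$. Since $H(\Da|\Ha)=\prod_\wp H_\wp(\Da|\Ha)$, this gives $H(\Da|\Ha)=J_K$, and because $F(\Da|\Ha)$ is by definition the class field of $K^*H(\Da|\Ha)=J_K$, it follows that $F(\Da|\Ha)=K$. Note first that, as already remarked, $K\Ha$ being three-dimensional forces $\alge=\matrici_2(K)$ --- a quaternion division algebra has no three-dimensional subalgebra --- and that $\Ha$ is contained in a conjugate of $\Ha_0=\lbmatrix{\oink_X}{\oink_X}{0}{\oink_X}$. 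At the archimedean places and at the places $\wp\notin X$ one has $\Da_\wp=\alge_\wp=\matrici_2(K_\wp)$, so there $H_\wp(\Da|\Ha)=N(\matrici_2(K_\wp)^*)=K_\wp^*$ for free; only the finite places $\wp\in X$ require work.

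So I would fix such a place $\wp$, write $n_\wp=v_\wp(I)$, and let $d_\wp$ be the level at $\wp$ of $\Da'$, say $\Da'_\wp=\Da_1\cap\Da_2$ with $\Da_1,\Da_2$ at distance $d_\wp$ in the Bruhat--Tits tree $\mathfrak{T}$. Then $\Da_\wp=\oink_\wp+I_\wp\Da'_\wp=(\Da'_\wp)^{[n_\wp]}$, which by Proposition~\ref{ll1}(2) equals $\Da_1^{[n_\wp]}\cap\Da_2^{[n_\wp]}$, so $\Da_1,\Da_2\in S_{n_\wp}(\Ha_\wp)$ and this branch is non-empty. Since $\Ha_\wp$ has rank $3$ it is not of the form $\oink_\wp[a]$ with $a$ nilpotent (those have rank at most $2$), so the classification of branches in \S5 applies: there is a largest integer $r_0$ with $S_{r_0}(\Ha_\wp)\neq\emptyset$, necessarily $n_\wp\le r_0$, and $S_0(\Ha_\wp)$ is an $r_0$-thick path, i.e.\ the set of vertices of $\mathfrak{T}$ at distance at most $r_0$ from some path $P$. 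On the other hand $\Ha_\wp$ is contained in a conjugate of $\Ha_{0\wp}$, whose branch $S_0$ is infinite, so by Proposition~\ref{basics}(3) $S_0(\Ha_\wp)$ is infinite; as $\mathfrak{T}$ is locally finite, this forces $P$ to be infinite. For every $p\in P$ the ball $B[p;n_\wp]$ lies in the $n_\wp$-neighbourhood of $P$, hence --- since $n_\wp\le r_0$ --- inside $S_0(\Ha_\wp)$, so $p$ is $n_\wp$-deep in $S_0(\Ha_\wp)$ and therefore $p\in S_{n_\wp}(\Ha_\wp)$ by the description of $S_{n_\wp}$ as the set of $n_\wp$-deep vertices of $S_0$ (\S2). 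Thus $P\subseteq S_{n_\wp}(\Ha_\wp)$, so $S_{n_\wp}(\Ha_\wp)$ has infinite diameter, and Corollary~\ref{cor32} gives $H_\wp(\Da|\Ha)=H((\Da'_\wp)^{[n_\wp]}|\Ha_\wp)=K_\wp^*$.

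The part I expect to be the genuine obstacle is precisely this last propagation: Corollary~\ref{cor32} speaks of $S_{n_\wp}(\Ha_\wp)$ rather than $S_0(\Ha_\wp)$, so knowing only that $\Ha_\wp$ sits inside a conjugate of $\Ha_{0\wp}$ --- which controls $S_0$ --- is not formally enough; one has to transfer the infiniteness from $S_0(\Ha_\wp)$ down to $S_{n_\wp}(\Ha_\wp)$. The thick-path description of \S5 is exactly what allows this, the essential extra input being the seemingly trivial fact that $S_{n_\wp}(\Ha_\wp)$ is non-empty because $\Ha_\wp\subseteq\Da_\wp$. Once $H_\wp(\Da|\Ha)=K_\wp^*$ is known at every place, the conclusion $F(\Da|\Ha)=K$ is immediate as in the first paragraph.
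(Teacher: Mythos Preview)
Your proof is correct and follows the same line as the paper, which simply observes that $\Ha$ sits inside a conjugate of $\Ha_0$ and that $S_0(\Ha_{0\wp})$ is infinite at every place. In fact your argument is more complete than the paper's: the paper leaves implicit precisely the propagation step you singled out as the obstacle --- passing from infiniteness of $S_0(\Ha_\wp)$ to infiniteness of $S_{n_\wp}(\Ha_\wp)$ --- and your appeal to the thick-path classification of \S5, together with the nonemptiness of $S_{n_\wp}(\Ha_\wp)$ coming from $\Ha_\wp\subseteq\Da_\wp$, is exactly the right way to justify it.
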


The situation is different for orders of rank $4$. In fact, by
choosing suborders of the type $\Ha=\oink_X+J\Da'$ where $J$ is an
ideal contained in $I$, we obtain that $F(\Da|\Ha)$ splits at
every place dividing $J/I$. Next result follows:
\begin{prop} For every field $F$ with $K\subseteq F\subseteq\Sigma$ there exists an order
 $\Ha$ of rank $4$, such that $F(\Da|\Ha)=F$.
\end{prop}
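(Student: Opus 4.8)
The plan is to realize every intermediate field $F$ between $K$ and $\Sigma$ as a representation field by choosing $\Ha$ to be a suborder of the form $\oink_X+J\Da'$ where $J\subseteq I$ is a suitably chosen integral ideal. The point is that the representation field $F(\Da|\Ha)$ is, by the last statement of Theorem \ref{th11}, the largest subfield of $\Sigma$ splitting completely at every place $\wp$ where $\Ha$ embeds locally into $\oink_X+I\Da_1$ for an Eichler order $\Da_1$ of level strictly larger than $d_\wp$. So I need to control, place by place, where such an embedding exists, and the extra room provided by shrinking the coefficient ideal from $I$ to $J$ at selected places is exactly what gives that control.

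First I would fix $F$ with $K\subseteq F\subseteq\Sigma$. Let $S_F$ be the (finite) set of finite places of $X$ that do \emph{not} split completely in $F$; these are all places where the level $d_\wp$ of $\Da'$ is even and $\wp\in X$, by condition (2) of the first part of Theorem \ref{th11}, and $F$ is the largest subfield of $\Sigma$ splitting completely outside $S_F$. Now define $J$ to be the ideal with $J_\wp=\pi_\wp I_\wp$ (one extra power of the uniformizer) at each $\wp\in S_F$, and $J_\wp=I_\wp$ at all other places, and set $\Ha=\oink_X+J\Da'$. Outside $S_F$ we have $\Ha_\wp=\Da_\wp$, so the local condition of the theorem is about $\Da_\wp$ embedding into $\oink_\wp+I_\wp\Da_1$ with $\Da_1$ of level $>d_\wp$; one checks using Corollary \ref{cor32} and Proposition \ref{prop31} (i.e.\ the diameter of $S_r(\Ha_\wp)$ is exactly $d_\wp$ when $\Ha_\wp=\Da_\wp^{[r]}$) that this forces $H_\wp(\Da|\Ha)=\oink_\wp^*K_\wp^{*2}$, so such places contribute no splitting constraint beyond what $\Sigma$ already satisfies. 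At a place $\wp\in S_F$, by contrast, $\Ha_\wp=\oink_\wp+\pi_\wp I_\wp\Da'_\wp=(\oink_\wp+I_\wp\Da'_\wp)^{[1]}$ in the notation of \S2–\S3, so its branch $S_r(\Ha_\wp)$ has diameter $d_\wp+2$, strictly larger than the level $d_\wp$ of $\Da'_\wp$; hence by Corollary \ref{cor32} we get $H_\wp(\Da|\Ha)=K_\wp^*$, i.e.\ $\Ha_\wp$ does embed into $\oink_\wp+I_\wp\Da_1$ for an Eichler order of level $d_\wp+2>d_\wp$, so $F(\Da|\Ha)$ must split completely at $\wp$.

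Combining the two cases: $F(\Da|\Ha)$ is the largest subfield of $\Sigma$ that splits completely at every place of $S_F$, which is precisely $F$ by our choice of $S_F$. It remains only to confirm that $H(\Da|\Ha)$ is genuinely a group (so that the representation field is defined), but this is automatic here since at every place the local image is one of $\oink_\wp^*K_\wp^{*2}$ or $K_\wp^*$, both subgroups of $K_\wp^*$, and the same orders-and-branches computation that proves Theorem \ref{th11} applies verbatim. I would also remark that one should double–check that $S_F$ is contained in the set of places where $d_\wp$ is even and $\wp\in X$, so that adding a power of $\pi_\wp$ at those places keeps $\Da$ itself of the required shape and does not accidentally move $I$ at a place where $\alge_\wp$ is a division algebra; this is where the hypothesis built into $\Da=\oink_X+I\Da_0$ (rather than an arbitrary coefficient ideal) is used.

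The main obstacle I anticipate is the bookkeeping at places of $S_F$: one must be sure that raising the exponent of the branch by exactly one really does raise the \emph{diameter} past $d_\wp$ and therefore triggers the splitting condition, and that it does so uniformly regardless of whether $\alge_\wp$ is split or division — though at division places there is nothing to prove since $\Sigma$ splits there anyway and $I_\wp=\oink_\wp$. The identity ``diameter of $S_r(\Da_1^{[r]}\cap\Da_2^{[r]})$ equals the distance between $\Da_1$ and $\Da_2$'' from \S3 (Corollary \ref{cor35} and the discussion around Proposition \ref{prop31}) is what makes this clean, so the argument should go through without further surprises.
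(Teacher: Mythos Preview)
Your overall strategy---take $\Ha=\oink_X+J\Da'$ with $J\subseteq I$ shrunk at a chosen finite set of places and invoke Theorem~\ref{th11} plus the branch--diameter computations---is exactly the paper's approach, and the local analysis you give (diameter $d_\wp$ at unchanged places, diameter $d_\wp+2$ at the shrunk places, hence $H_\wp(\Da|\Ha)$ equal to $\oink_\wp^*K_\wp^{*2}$ or $K_\wp^*$ respectively) is correct.

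There is, however, a genuine gap. You define $S_F$ as ``the (finite) set of finite places of $X$ that do not split completely in $F$'', but this set is \emph{infinite} whenever $F\neq K$: by Chebotarev the non-split primes in a nontrivial extension have positive density. Consequently the ideal $J$ with $J_\wp=\pi_\wp I_\wp$ for every $\wp\in S_F$ does not exist, and the construction as written fails. The repair is straightforward and is what the paper's one-line proof implicitly uses: instead of all non-split places, pick a \emph{finite} set $T$ of places whose Frobenius elements generate $\mathrm{Gal}(\Sigma/F)$ inside $\mathrm{Gal}(\Sigma/K)$ (such $T$ exists since $\mathrm{Gal}(\Sigma/K)$ is finite and every element is a Frobenius at infinitely many places). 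Then $F$ is precisely the largest subfield of $\Sigma$ splitting completely at every $\wp\in T$, each $\wp\in T$ automatically lies in $X$ with $\alge_\wp$ split and $d_\wp$ even (since $\wp$ is non-split in $\Sigma$), and defining $J$ by $J_\wp=\pi_\wp I_\wp$ for $\wp\in T$ and $J_\wp=I_\wp$ otherwise makes the remainder of your argument go through verbatim to give $F(\Da|\Ha)=F$.
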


Note that the diameter of $S_0(\Da_0^{[t]})$, where $\Da_0$ is an
Eichler order of level $d$ is $\rho=d+2t$, and therefore the
diameter of $S_r(\Da_0^{[t]})$ is $d+2(t-r)$. Next result follows.

\begin{prop} Let $\Da=\oink_X+I\Da_1$ and $\Ha=\oink_X+J\Da_2$,
where $\Da_1$ and $\Da_2$ are Eichler orders. Let $l_\wp(\Da_i)$
denote the level at $\wp$ of the Eichler order $\Da_i$, then $\Ha$
embeds into an order in $\mathrm{gen}(\Da)$ if and only if at
every place $\wp$ the following inequalities hold:
$$v_\wp(J)\geq v_\wp(I),\qquad l_\wp(\Da_2)+2v_\wp(J)\geq
l_\wp(\Da_1)+2v_\wp(I).$$ Furthermore, if $\Ha\subseteq\Da$, then
$F(\Da|\Ha)$ is the largest subfield of $\Sigma(\Da)$ splitting at
every place where the second inequality is strict.
\end{prop}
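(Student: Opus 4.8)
The plan is to localize everything and reduce both statements to the branch computations of \S2--\S3. By the general theory (\S2, and \cite{abelianos}, \S2), $\Ha$ is represented by some order in $\mathrm{gen}(\Da)$ if and only if $H(\Da|\Ha)=\prod_\wp H_\wp(\Da|\Ha)$ is non-empty, i.e. $H_\wp(\Da|\Ha)\neq\emptyset$ at every $\wp$, and when $\Ha\subseteq\Da$ the field $F(\Da|\Ha)$ is the class field of $K^*H(\Da|\Ha)$. If $\wp\notin X$ (in particular at every archimedean $\wp$), then $\Da_\wp=\alge_\wp$, so $H_\wp(\Da|\Ha)=N(\alge_\wp^*)=H_\wp(\Da)$ and there is no condition; if $\wp$ is a finite place ramifying $\alge$, the unique maximal order is normal, the local condition reduces to $v_\wp(J)\geq v_\wp(I)$, and $H_\wp(\Da|\Ha)=K_\wp^*=H_\wp(\Da)$. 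At all of these places $\Sigma(\Da)$ splits completely by Theorem \ref{th11} (a ramified real place need not, but it is harmless below since $H_\wp(\Da|\Ha)=H_\wp(\Da)$ there). So all content sits at the split places $\wp\in X$, where I set $a=v_\wp(I)$, $b=v_\wp(J)$, $d_i=l_\wp(\Da_i)$ and note $\Da_\wp=\Da_{1,\wp}^{[a]}$ with $\Da_{1,\wp}$ Eichler of level $d_1$, and $\Ha_\wp$ a conjugate of $\Da_{2,\wp}^{[b]}$ with $\Da_{2,\wp}$ Eichler of level $d_2$.

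For the embedding criterion: since $GL_2(K_\wp)$ is transitive on Eichler orders of a fixed level, a conjugate of $\Ha_\wp$ sits inside $\Da_\wp$ exactly when $\Ha_\wp\subseteq\Ea^{[a]}$ for some Eichler order $\Ea$ of level $d_1$; by Proposition \ref{basics}(5) this is the condition that $S_a(\Ha_\wp)$ contain two vertices at distance $d_1$, hence (the branch being connected) that $\mathrm{diam}\,S_a(\Ha_\wp)\geq d_1$. Now $S_0(\Ha_\wp)=S_0(\Da_{2,\wp}^{[b]})$ is, by Proposition \ref{lt3}, the $b$-neighbourhood of the length-$d_2$ path joining the two vertices defining $\Da_{2,\wp}$; so $S_a(\Ha_\wp)$ is empty if $b<a$ and, if $b\geq a$, is the $(b-a)$-neighbourhood of that path, of diameter $d_2+2(b-a)$. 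Hence $H_\wp(\Da|\Ha)\neq\emptyset$ if and only if $b\geq a$ and $d_2+2(b-a)\geq d_1$, i.e. if and only if the two displayed inequalities hold at $\wp$; running over all places gives the first assertion.

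For the representation field, assume $\Ha\subseteq\Da$, so both inequalities hold everywhere and $\mathrm{diam}\,S_a(\Ha_\wp)=d_2+2(b-a)\geq d_1$ at split $\wp\in X$. I then read $H_\wp(\Da|\Ha)=H(\Da_{1,\wp}^{[a]}|\Ha_\wp)$ off \S3: it equals $K_\wp^*$ when $d_1$ is odd (Corollary \ref{cor1}); it equals $K_\wp^*$ when $d_1$ is even and $d_2+2(b-a)>d_1$, i.e. when the second inequality is strict (Corollary \ref{cor32}); and it equals $\oink_\wp^*K_\wp^{*2}$ when $d_1$ is even and $d_2+2(b-a)=d_1$, i.e. when the second inequality is an equality (Proposition \ref{prop31}). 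By the computation of $H_\wp(\Da)$ in the proof of Theorem \ref{th11} (it is $\oink_\wp^*K_\wp^{*2}$ exactly at the split places of $X$ with $d_1$ even, and $K_\wp^*$ at every other finite place), we get $H_\wp(\Da|\Ha)=H_\wp(\Da)$ at every place except the split $\wp\in X$ with $d_1$ even at which the second inequality is strict, where $H_\wp(\Da|\Ha)=K_\wp^*\supsetneq H_\wp(\Da)=\oink_\wp^*K_\wp^{*2}$. Thus $K^*H(\Da|\Ha)$ is obtained from $K^*H(\Da)$ by adjoining the local groups $K_\wp^*$ at precisely those places, and by class field theory its class field is the largest subextension of $\Sigma(\Da)/K$ splitting completely at all of them. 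Since $\Sigma(\Da)$ already splits completely at the remaining places where the second inequality is strict (places outside $X$, archimedean places, and split places of $X$ with $d_1$ odd, by Theorem \ref{th11}), this coincides with the largest subfield of $\Sigma(\Da)$ splitting completely at every place where the second inequality is strict, as claimed.

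The step I expect to be the main obstacle is the split-place embedding criterion: one must faithfully convert ``some conjugate of $\Ha_\wp$ lies in $\Da_\wp$'' into the branch inequality $\mathrm{diam}\,S_{v_\wp(I)}(\Ha_\wp)\geq l_\wp(\Da_1)$ --- combining Proposition \ref{basics}(5), transitivity of $GL_2(K_\wp)$ on Eichler orders of a given level, connectedness of branches, and the shape of $S_a(\Da_{2,\wp}^{[b]})$ --- and then carefully track the parity of $d_1$ and the three regimes (second inequality strict, equality, or $d_1$ odd) both when identifying the local spinor images from \S3 and when matching $K^*H(\Da|\Ha)$ against the claimed description of $F(\Da|\Ha)$.
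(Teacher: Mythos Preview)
Your proposal is correct and follows essentially the same route as the paper: the paper's entire argument is the one-line observation preceding the proposition that the diameter of $S_r(\Da_0^{[t]})$ is $d+2(t-r)$, after which ``Next result follows'' via Theorem~\ref{th11}, Proposition~\ref{basics}(5), Corollary~\ref{cor32}, and Proposition~\ref{prop31}. You have unpacked exactly this, correctly handling the non-split places and the parity/strictness case split at the split places; the only difference is the level of detail.
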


\end{document}